\documentclass[singlespace]{article}
\usepackage{inputenc}
\usepackage{hyperref}
\usepackage{url}
\usepackage{amsmath}
\usepackage{mathtools}
\usepackage{bm}
\usepackage{amsfonts}
\usepackage{algorithm}
\usepackage{algorithmic}
\usepackage{amssymb}
\usepackage{amsthm,paralist}
% %  \usepackage{enumitem}  % not compatible with arxiv submission! 
\usepackage{graphicx}
\usepackage{color}
\usepackage{subfigure}   % For two subfigures put together
\usepackage{diagbox} % draw slash in the table    \usepackage{slashbox}
\usepackage{stfloats}  % put table at bottom
\usepackage{footnote}
 \usepackage{multirow}  % multi-row use
 \usepackage{booktabs} % for midfule and toprule
\usepackage[scriptsize,bf]{caption}
\usepackage{amssymb}
\usepackage{diagbox} % draw slash in the table    
\usepackage{slashbox}
\usepackage{stfloats}  % put table at bottom
\usepackage{graphics,graphicx}
\usepackage{epstopdf}
\usepackage{cleveref} % use cref
\usepackage{lipsum}
\usepackage{natbib}

\usepackage{amsthm}

\newtheorem{theorem}{Theorem}[section]
\newtheorem{lemma}[theorem]{Lemma}

\newtheorem{prop}{Proposition}
\newtheorem{definition}{Definition}
\newtheorem{assumption}{Assumption}
\newtheorem{remark}{Remark}
\newtheorem{example}{Example}

\usepackage{fancybox}
\usepackage{booktabs}
\usepackage{amssymb}
\usepackage{multirow}
\usepackage{fancyhdr}
\usepackage{xcolor}
\usepackage{bm}
\usepackage{indentfirst}
\usepackage{booktabs}
\usepackage{bigstrut}
\usepackage{subfigure}
\usepackage{float}

\usepackage{tabularx}

% Recommended, but optional, packages for figures and better typesetting:
\usepackage{microtype}
\usepackage{graphicx}
\usepackage{subfigure}
\usepackage{booktabs}

% \ifpdf
%   \DeclareGraphicsExtensions{.eps,.pdf,.png,.jpg}
% \else
%   \DeclareGraphicsExtensions{.eps}
% \fi

\usepackage{array}
\newcolumntype{P}[1]{>{\centering\arraybackslash}p{#1}}

% make fractions look larger in table
\usepackage{array} % for "\newcolumntype" macro
\newcolumntype{C}{>{$\displaystyle}c<{$}}

\DeclareMathOperator*{\minimize}{minimize}

% Information that is shared between the article and the supplement
% (title and author information, macros, packages, etc.) goes into
% ex_shared.tex. If there is no supplement, this file can be included
% directly.

% \input{shared}

%%%%%%%%%%%%%%%%%%%%%%%%%%%%%%%%%%%%%%%%%%%%%%%%
%%%%%%%%%%%% for arxiv version %%%%%%%%%%%%
\textheight 8.5truein
\parskip 0.05in
\topmargin -0.3truein   \textwidth 6.5truein 
\oddsidemargin -0.05in %
\evensidemargin -0.05in %
   %line space adjusted here
\setcounter{footnote}{0} \sloppy

\title{Global Convergence and Variance-Reduced Optimization \\
           for a Class of Nonconvex-Nonconcave Minimax Problems}
% \title{Understanding Limitation of Two Symmetrized Orders by Worst-case Complexity  \thanks{Submitted to the editors \date{October 9, 2019}.}}
% \funding{This work was funded ???}}}

% Authors: full names plus addresses.
\author{
Junchi Yang
\thanks{Department of Industrial and Enterprise Systems Engineering, University of Illinois at Urbana-Champaign, IL 61801, USA 
  (\texttt{junchiy2@illinois.edu, niaohe@illinois.edu}).}
\and 
Negar Kiyavash
\thanks{School of Management of Technology, \'Ecole Polytechnique F\'ed\'erale de Lausanne, Switzerland
  (\texttt{negar.kiyavash@epfl.ch}).}
\and 
Niao He
\footnotemark[1]
}

%%%%%%%%%%%%%%%%%%%%%%%%%%%%%%%%%%%%%%%%%%%%%%%%
%%%%%%%%%%%% for arxiv version %%%%%%%%%%%%

% Optional PDF information
\ifpdf
\hypersetup{
  pdftitle={blabla},
  pdfauthor={blabla}
}
\fi

% The next statement enables references to information in the
% supplement. See the xr-hyperref package for details.

% \externaldocument{ex_supplement}

% FundRef data to be entered by SIAM
%<funding-group>
%<award-group>
%<funding-source>
%<named-content content-type="funder-name"> 
%</named-content> 
%<named-content content-type="funder-identifier"> 
%</named-content>
%</funding-source>
%<award-id> </award-id>
%</award-group>
%</funding-group>

%\setlength{\parskip}{\baselineskip}
%\usepackage{parskip}
%\setlength{\parindent}{0pt}

\begin{document}
\maketitle

\begin{abstract}
Nonconvex minimax problems appear frequently in emerging machine learning applications, such as generative adversarial networks and adversarial learning. Simple algorithms such as the gradient descent ascent (GDA) are the common practice for solving these nonconvex games and receive lots of empirical success. Yet, it is known that these vanilla GDA algorithms with constant step size can potentially diverge even in the convex setting. In this work, we show that for a subclass of nonconvex-nonconcave objectives satisfying a so-called two-sided Polyak-{\L}ojasiewicz inequality, the alternating gradient descent ascent (AGDA) algorithm converges globally at a linear rate and the stochastic AGDA achieves a sublinear rate. We further develop a variance reduced algorithm that attains a provably faster rate than AGDA when the problem has the finite-sum structure. 
\end{abstract}

\section{Introduction}

\noindent We consider minimax optimization problems of the forms
\begin{equation} \label{objective}
    \min_{x\in\mathbb{R}^{d_1}}\max_{y\in\mathbb{R}^{d_2}} f(x, y) \triangleq  \mathbb{E}[F(x, y ; \xi)],
\end{equation}
and 
\begin{equation} \label{objective finite sum}
    \min_{x\in\mathbb{R}^{d_1}}\max_{y\in\mathbb{R}^{d_2}} f(x, y) \triangleq  \frac{1}{n}\sum_{i=1}^n f_i(x, y),
\end{equation}
where $\xi$ is a random vector with support $\Xi$, and $f(x,y)$ is a possibly nonconvex-nonconcave function. Minimax problems have been widely studied in game theory and operations research. Recent emerging applications in machine learning have further stimulated a surge of interest in these problems. For example, generative adversarial networks (GANs) \citep{goodfellow2016deep} can be viewed as a two-player game between a generator that produces synthetic data and a discriminator that differentiates between true data and synthetic data. In reinforcement learning, solving Bellman equations can also be reformulated as minimax optimization problems \citep{chen2016stochastic,dai2017learning,dai2017sbeed}. Other applications include robust optimization \citep{namkoong2016stochastic, namkoong2017variance}, adversarial machine learning \citep{sinha2017certifiable, madry2017towards}, unsupervised learning \citep{xu2005maximum}, and so on. 

The most natural and frequently used methods for solving minimax problems (\ref{objective}) and (\ref{objective finite sum}) are the gradient descent ascent (GDA) algorithms (or their stochastic variants), with either simultaneous or alternating updates of the primal-dual variables, referred to as SGDA and AGDA, respectively, throughout the paper. While these algorithms have received much empirical success especially in adversarial training, it is known that these GDA algorithms with constant stepsizes could fail to converge for general smooth function \citep{mescheder2018training}, even for the bilinear games \citep{gidel2019negative}; even when they do converge, the stable limit point may not be a  local Nash equilibrium \citep{daskalakis2018training, mazumdar2018convergence}. 
% Consequently, GDA should not be simply considered as a counterpart of gradient descent in minimax optimization. 
On the other hand, GDA algorithms can converge linearly to the saddle point for strongly-convex-strongly-concave functions \citep{facchinei2007finite}. Moreover, for many  simple nonconvex-nonconcave objective functions, such as, 
$f(x,y) = x^2 + 3\sin^2x\sin^2y-4y^2-10\sin^2y,$
we also observe that GDA algorithms with constant stepsizes indeed converge to the global Nash equilibrium (or saddle point), at a linear rate (see Figure 1). This also holds true for their stochastic variants, albeit at a sublinear rate. 
% Since convexity-concavity is not able to guarantee the global convergence of GDA, 
These facts naturally raise a question: 
\emph{Is there a general condition under which  GDA algorithms converge to the global optima?}

%\iffalse
\begin{figure} [!ht]  
\centering
\subfigure[Surface plot of $f(x,y)$]{%
\label{fig:first}%
\includegraphics[height=0.215\linewidth]{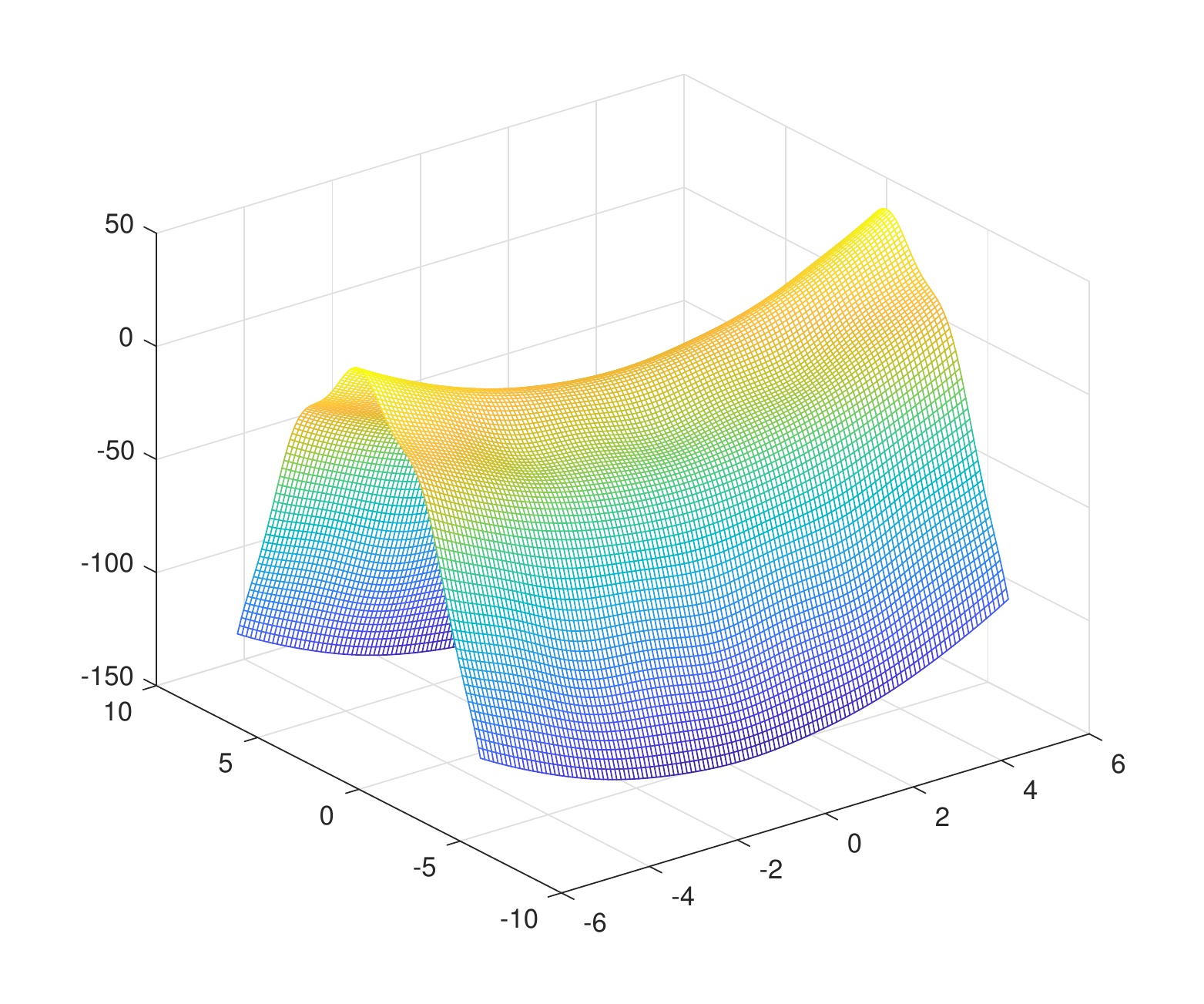}}%
\subfigure[Deterministic GDA algorithms]{%
\label{fig:second}%
\includegraphics[height=0.215\linewidth]{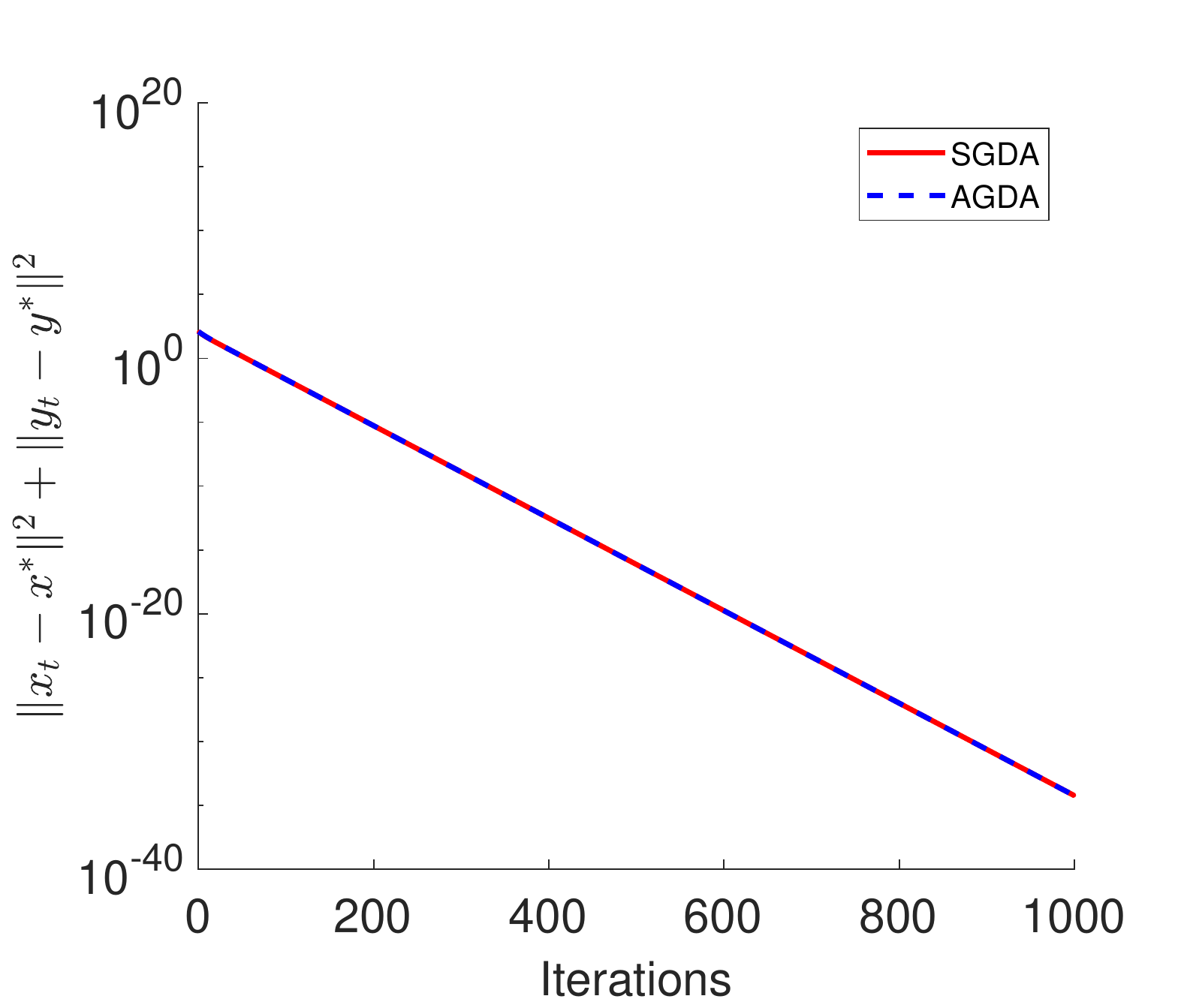}}%
\subfigure[Stochastic GDA algorithms]{%
\label{fig:third}%
\includegraphics[height=0.215\linewidth]{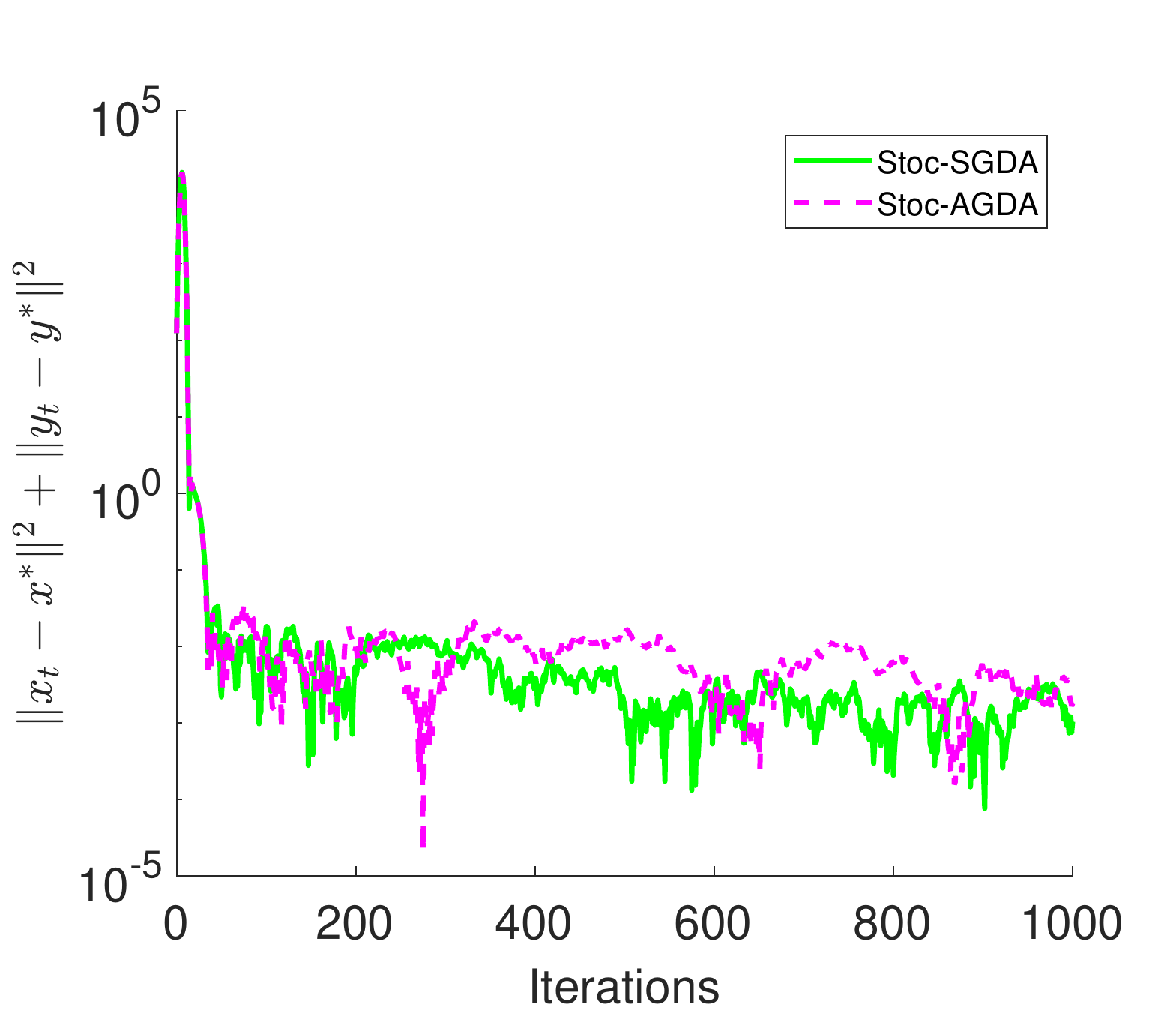}}%
\subfigure[Trajectories]{%
\label{fig:fourth}%
\includegraphics[height=0.215\linewidth]{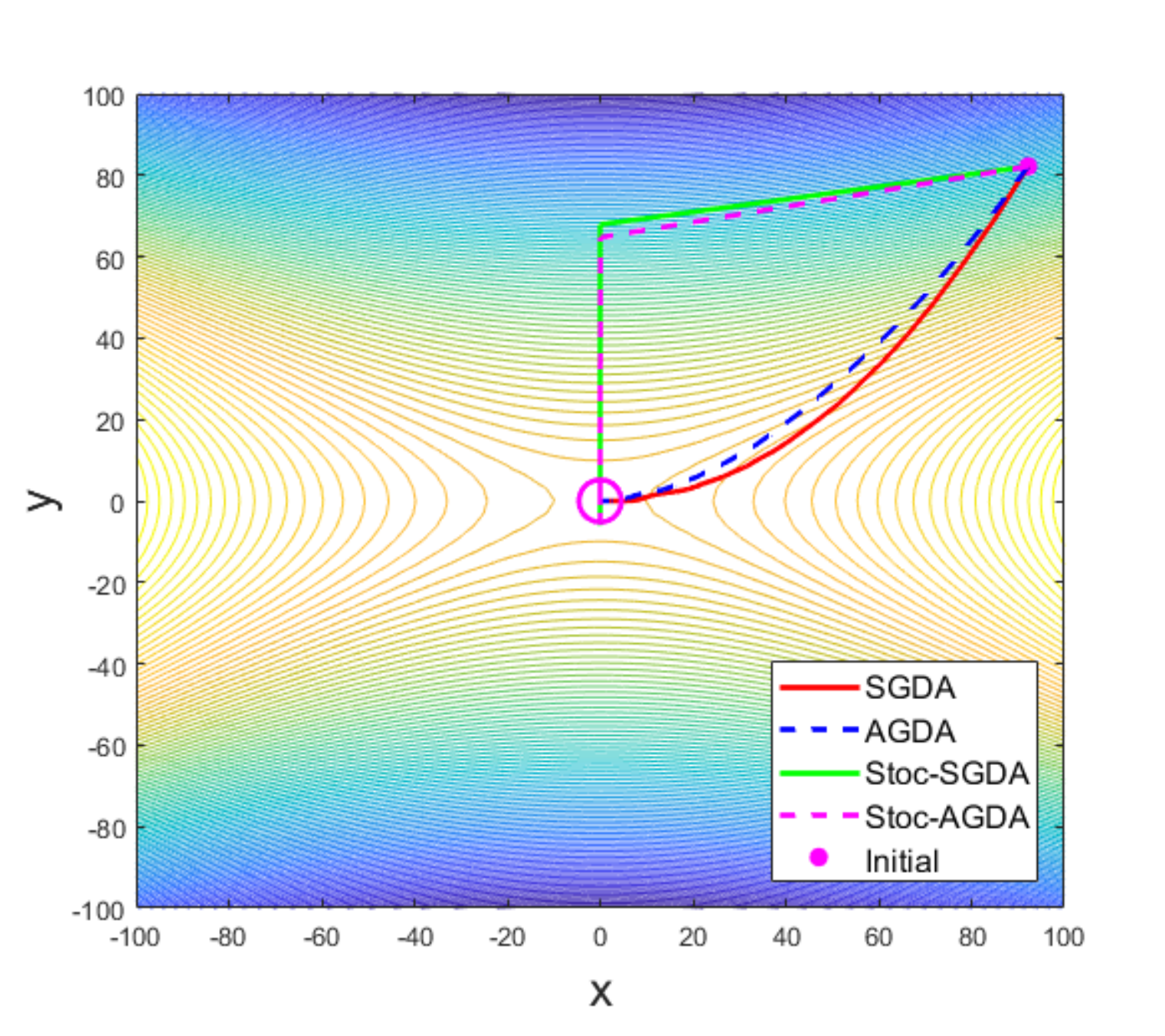}}
\vspace{-0.4cm}
\caption{(a) Surface plot of the nonconvex-nonconcave function $f(x,y) = x^2 + 3\sin^2x\sin^2y-4y^2-10\sin^2y$ ; (b) Convergence of SGDA and AGDA; (c) Convergence of stochastic SGDA and stochastic AGDA; (d) Trajectories of four algorithms} \label{figure 1}
\end{figure}
%\fi

Furthermore, the use of variance reduction techniques has played a prominent role in improving the convergence over stochastic or batch algorithms for both convex and nonconvex minimization problems, which have been extensively studied in the past few years; see, e.g., \citep{johnson2013accelerating, reddi2016stochastic,reddi2016fast, xiao2014proximal}, just to name a few. However, when it comes to the minimax problems, there are limited results, except under convex-concave setting \citep{palaniappan2016stochastic,du2019linear}. This leads to another open question:
% Finite-sum structure is common in minimax problems \citep{tan2018stochastic, zhang2017stochastic}. For finite-sum minimization problems, the variance reduction techniques are widely developed for both convex and nonconvex problems  \citep{johnson2013accelerating, reddi2016stochastic,reddi2016fast, xiao2014proximal}. Those techniques have also been extended to the convex-concave saddle point problem \citep{palaniappan2016stochastic,du2019linear}. However, the following question is still open: 
\emph{Can we improve GDA algorithms for nonconvex-nonconcave minimax problems?}

\subsection{Our contributions}
\noindent In this paper, we address these two questions and specifically focus on the alternating gradient descent ascent, namely AGDA. This is due to several considerations.  First of all, it has been recently shown that alternating updates of GDA are more stable than
simultaneous updates~\citep{gidel2019negative,bailey2019finite}. Note that for a convex-concave matrix game, SGDA may diverge while AGDA is proven to always have bounded iterates \citep{gidel2019negative}. See Figure~\ref{figure AvsS} for a simple illustration. Secondly, in general, it is much more challenging to analyze AGDA than SGDA. There is a lack of discussion on the convergence of AGDA for general minimax problems in the literature. Our contributions are summarized as follows. 

\begin{figure*} [!ht]  
\centering
\subfigure[ $\tau = 0.01$]{%
\label{fig:first}%
\includegraphics[height=0.21\linewidth]{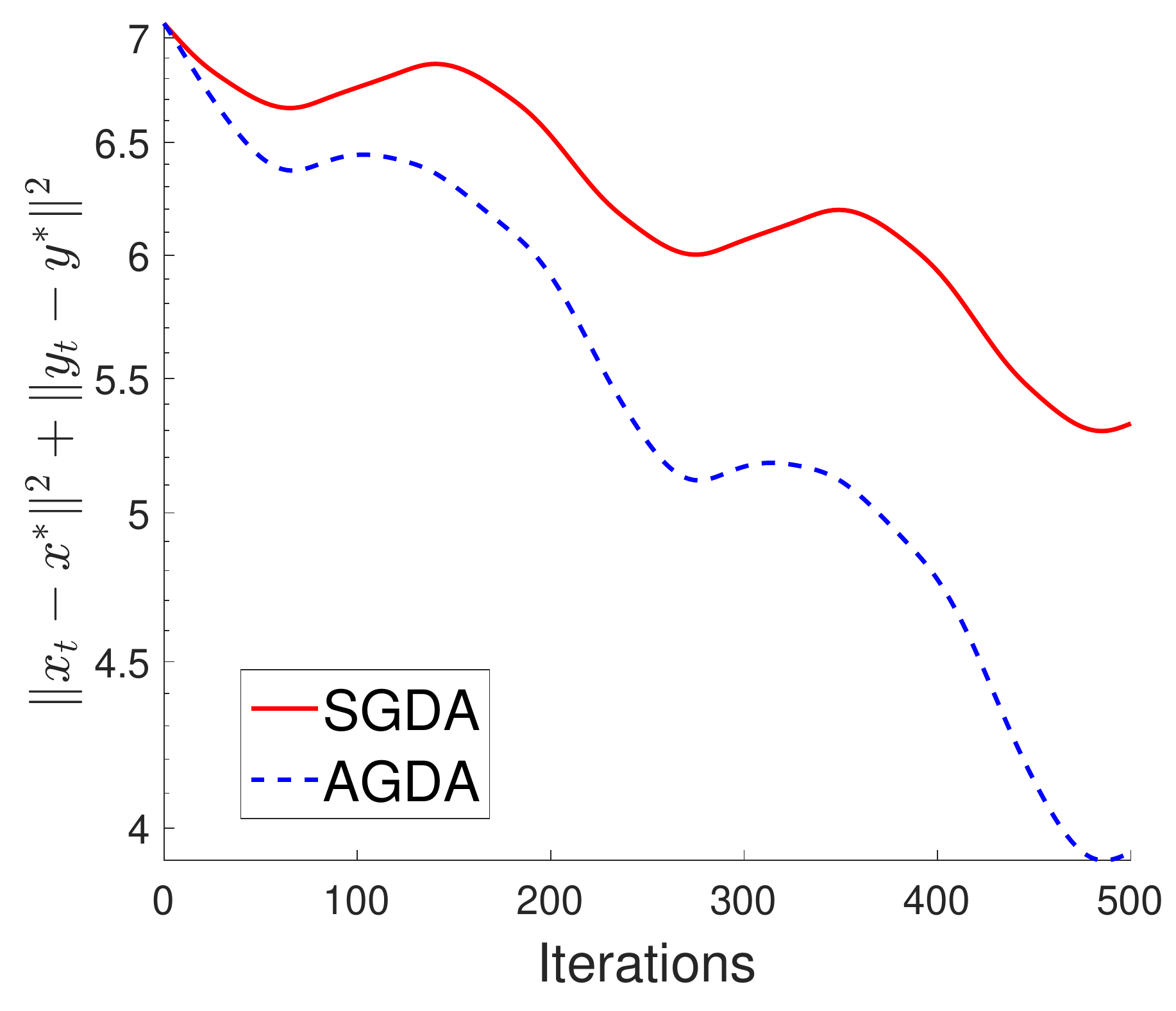}}%
\subfigure[ $\tau = 0.01$]{%
\label{fig:second}%
\includegraphics[height=0.21\linewidth]{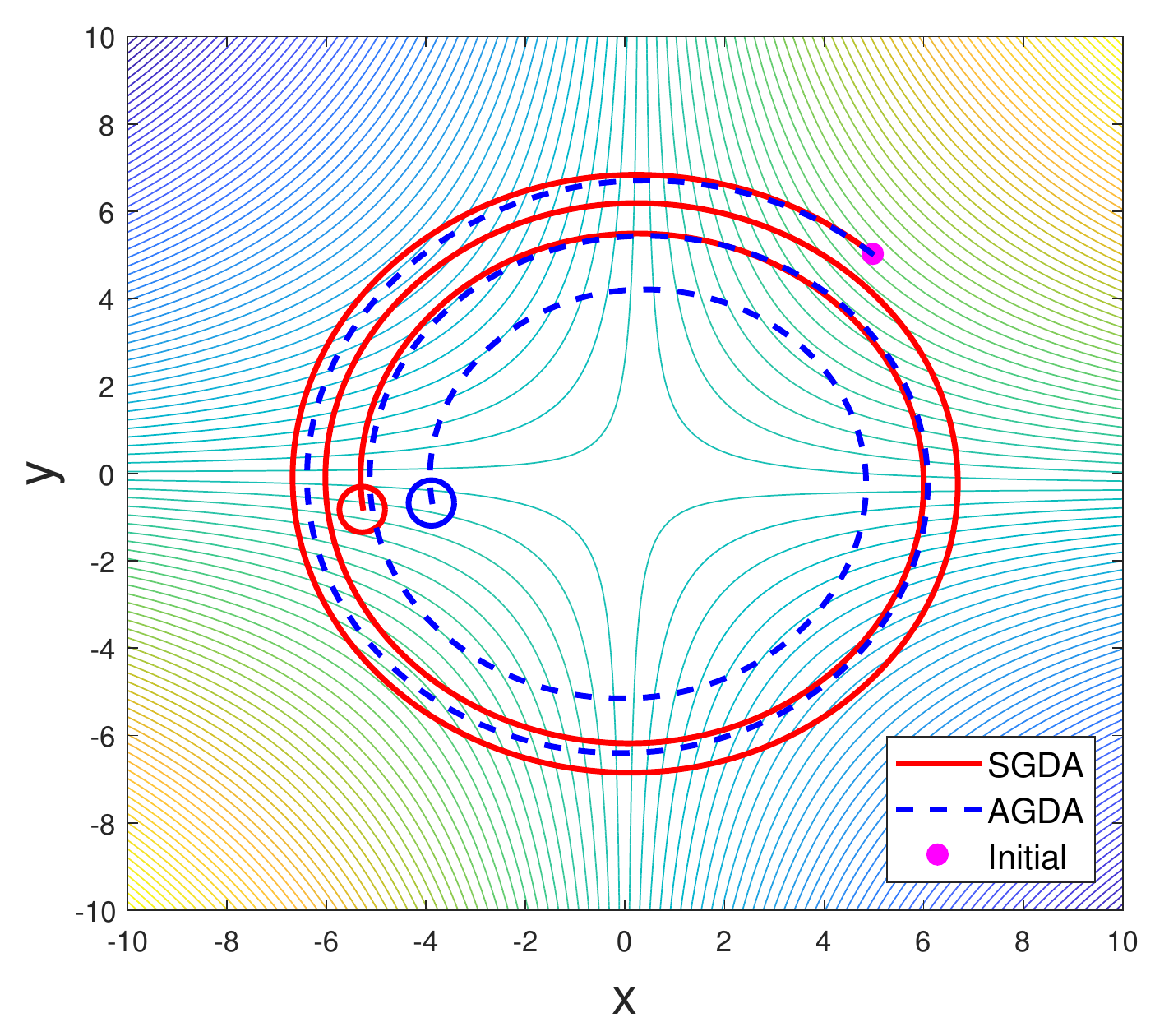}}%
\subfigure[$\tau = 0.025$]{%
\label{fig:third}%
\includegraphics[height=0.21\linewidth]{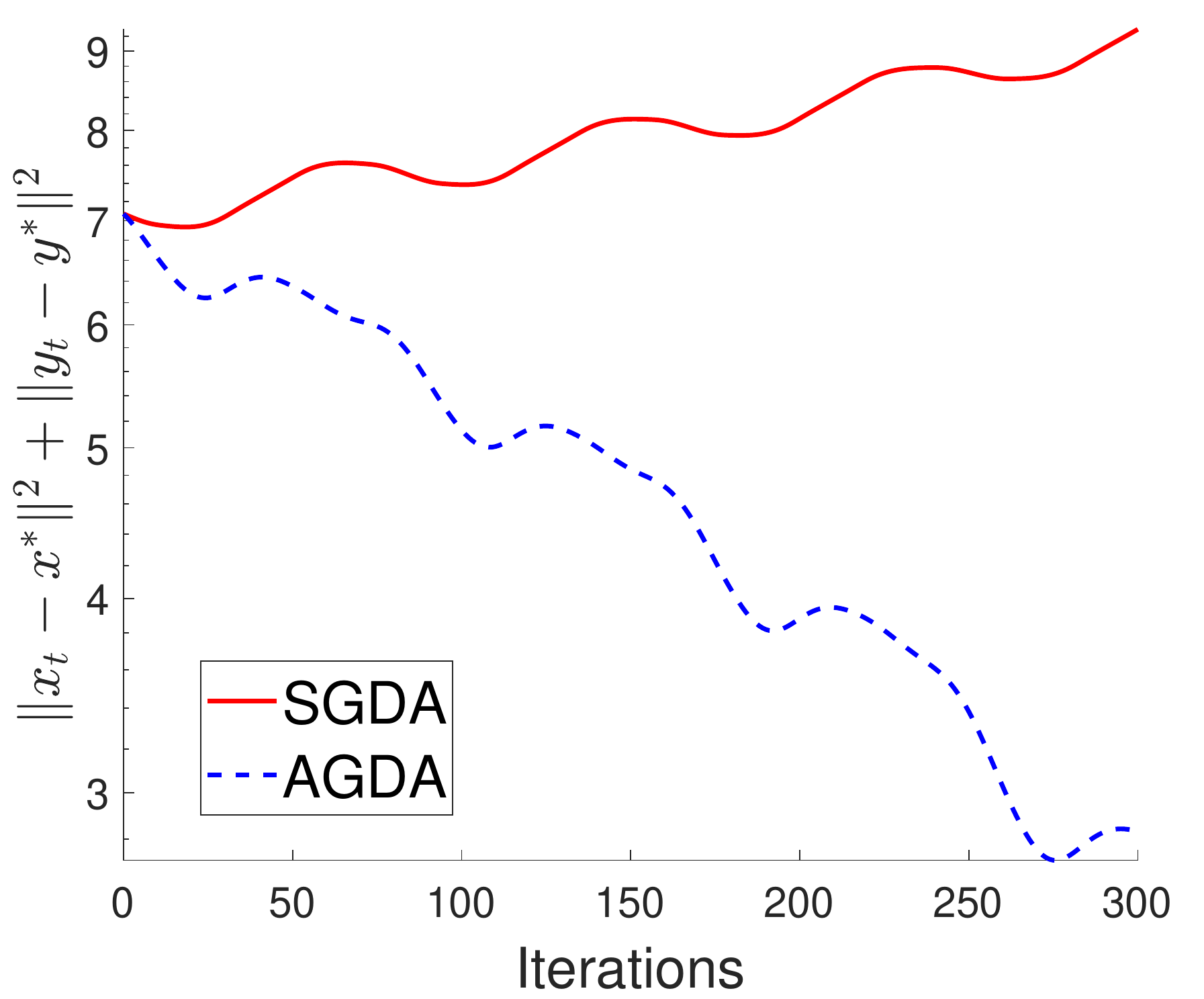}}%
\subfigure[$\tau = 0.025$]{%
\label{fig:fourth}%
\includegraphics[height=0.21\linewidth]{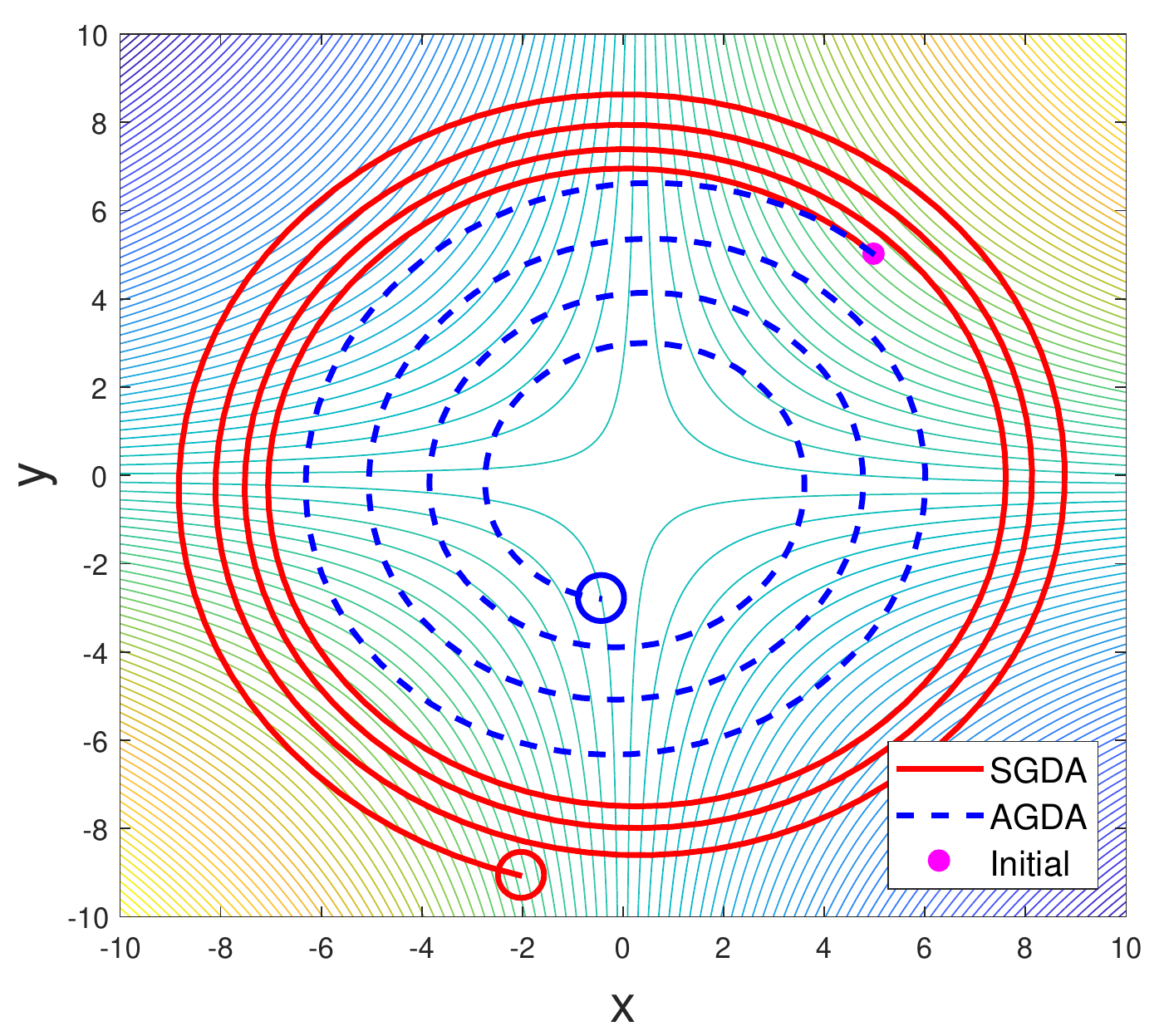}}
\vspace{-0.4cm}
\caption{Consider the objective $f(x, y) = \log\left(1+e^x\right) + 3xy - \log\left(1+e^y\right)$: (a) Convergence of AGDA and SGDA under the stepsize $\tau=0.01$; (b) Trajectories of two algorithms under the stepsize $\tau=0.01$; (c) Convergence of AGDA and SGDA under stepsize $\tau=0.025$; (d) Trajectories of two algorithms with stepsize $\tau=0.025$;} \label{figure AvsS}
\end{figure*}

First, we identity a general condition that relaxes the convex-concavity requirement of the objective function while still guaranteeing global convergence of AGDA and stochastic AGDA (Stoc-AGDA).  We call this the two-sided PL condition, which requires that both players' utility functions satisfy Polyak-{\L}ojasiewicz (PL) inequality \citep{polyak1963gradient}.
% We provide the formal definition in Section \ref{Sec2}. 
% PL condition is known to be the weakest conditions that guarantee linear convergence of gradient descent method \citep{karimi2016linear}. It is much weaker than assuming strong convexity in the minimization problem and 
Such conditions indeed hold true for several applications, including robust least square, generative adversarial imitation learning for linear quadratic regulator (LQR) dynamics~\citep{cai2019global}, and potentially many others in adversarial learning~\citep{du2019gradient}, robust phase retrieval~\citep{sun2018geometric,zhou2016geometrical}, robust control~\citep{fazel2018global}, and etc.
% such as deep learning \citep{du2019gradient}, phase retrieval \citep{sun2018geometric,zhou2016geometrical}, control \citep{fazel2018global}, matrix square root computation \citep{jain2015global}, etc. 
We show that under the two-sided PL condition, AGDA with proper constant stepsizes converges globally to a saddle point at a linear rate of $\mathcal{O}(1-\kappa^{-3})^t$, while  Stoc-AGDA with proper diminishing stepsizes  converges to a saddle point at a sublinear rate of $\mathcal{O}(\kappa^5/t)$, where $\kappa$ is the underlying condition number.

% We give the answer YES to the second question under the two-sided PL condition.
Second, for minimax problems with the finite sum structure, we introduce a variance-reduced AGDA algorithm (VR-AGDA) that leverages the idea of stochastic variance reduced gradient (SVRG)~\citep{johnson2013accelerating,reddi2016stochastic} with the alternating updates. We prove that VR-AGDA achieves the complexity of $\mathcal{O}((n^{2/3}\kappa^3 \log(1/\epsilon))$ in the region $n\leq \kappa^9$ and $\mathcal{O}(n+\kappa^9)\log(1/\epsilon))$ in the region $n\geq \kappa^9$, where $n$ is the number of component functions. This greatly improves over the $\mathcal{O}\left(n\kappa^3\log\frac{1}{\epsilon}\right)$ complexity of AGDA when applied to the finite sum minimax problems. We summarize the results of these algorithms in Table \ref{table:1}. Our numerical experiments further demonstrate that VR-AGDA performs significantly better than AGDA and Stoc-AGDA, especially for problems with large condition numbers.  To our best knowledge, this is the first work to provide a variance reduced algorithm and theoretical guarantees in the nonconvex-nonconcave regime of minimax optimization. 

% The complexity of AGDA for finite-sum problem with $n$ components and  condition number $\kappa$ is $\mathcal{O}(n\kappa^3)\log(1/\epsilon)$. We combine AGDA with the popular stochastic variance reduced gradient (SVRG), and the algorithm VR-AGDA can achieve complexity $\mathcal{O}((n^{2/3}\kappa^3 \log(1/\epsilon))$ in the region $n\leq \kappa^9$ and $\mathcal{O}(n+\kappa^9)\log(1/\epsilon))$ in the region $n\geq \kappa^9$. We summarize the complexities of all algorithms in this work for finite-sum structures in Table \ref{table:1}. 

\begin{table}[!ht] 
\center
\centering
\vskip 0.15in
 \begin{tabular}{ |P{6em} | P{8em} | P{8em}| P{12em} | }  
  %\begin{tabular}{ | c | c|c | } 
\hline
 Algorithms & AGDA  & Stoc-AGDA &   VR-AGDA \\ 
\hline
Complexity & $\mathcal{O}\left(n\kappa^3\log\frac{1}{\epsilon}\right)$ & $\mathcal{O}\left(\frac{\kappa^5}{\mu_2\epsilon}\right)$  & $
                \begin{array}{ll}
                  \mathcal{O}\left(n^{\frac{2}{3}}\kappa^3\log\frac{1}{\epsilon}\right),  n\leq \kappa^9\\
                  \mathcal{O}\left((n+\kappa^9)\log\frac{1}{\epsilon}\right)
                \end{array}
             $ \\ 
\hline
\end{tabular}
\caption{Complexities of three algorithms for the finite-sum problem~(\ref{objective finite sum}), where $\kappa\triangleq l/\mu_1$ is condition number, $l$ is Lipschtiz gradient constant, $\mu_1$ and $\mu_2$ are the two-side PL constants with $\mu_1\leq \mu_2$. See Section \ref{Sec4} for more details.}  \label{table:1}
\end{table}

%\vspace{-1cm}
\subsection{Related work}

\paragraph{Nonconvex minimax problems.} There has been a recent surge in research on solving minimax optimization beyond the convex-concave regime~\citep{sinha2017certifiable, chen2017robust, qian2019robust, thekumparampil2019efficient,  lin2018solving, nouiehed2019solving, abernethy2019last}, but they differ from our work from various perspectives. For example, \citet{chen2017robust,sinha2017certifiable,lin2019gradient, thekumparampil2019efficient} considered the minimax problem when the objective function is nonconvex in $x$ but concave in $y$ and focused on achieving convergence to stationary points. Their algorithms require solving the inner maximization or some sub-problems with high accuracy at every iteration, which are different from AGDA. \citet{lin2018solving} considered a general class of weakly-convex weakly-concave minimax problems and proposed an inexact proximal point method to find an $\epsilon$-stationary point. Their convergence result relies on assuming the existence of a solution to the corresponding Minty variational inequality, which is often hard to verify. \citet{abernethy2019last} recently showed the linear convergence of a second-order iterative algorithm, called Hamiltonian gradient descent (HGD), for a subclass of ``sufficiently bilinear" functions. Compared with their work, the  PL condition we consider in this paper is easier to verify and GDA algorithms are much simpler.
\vspace{-0.25cm}
\paragraph{PL condition.} Recently, \citet{nouiehed2019solving} studied a class of minimax problems where the objective only satisfies a one-sided PL condition and introduced the GDmax algorithm, which takes multiple ascent steps at every iteration. Our work differs from \citep{nouiehed2019solving} in two aspects: (i) we consider the two-sided PL condition which guarantees global convergence \footnote{We also show that AGDA can find $\epsilon-$stationary point for minimax problems under the one-sided PL condition within $\mathcal{O}(1/\epsilon^2)$ iterations in Appendix \ref{appendix4}.}; (ii) we consider AGDA which takes one ascent step at every iteration. Another closely related work is \citet{cai2019global}. The authors considered a specific application in generative adversarial imitation learning with linear quadratic regulator dynamics. This is a special example that falls under the two-sided PL condition.

\vspace{-0.25cm}
\paragraph{Variance-reduced minimax optimization.} There exists a few works that apply variance reduction techniques to minimax optimization. \citet{palaniappan2016stochastic, luo2019stochastic} provided linear-convergent algorithms for strongly-convex-strongly-concave objectives, based on simultaneous updates. \citet{du2019linear} extended the result to convex-strongly-concave objectives with full-rank coupling bilinear term. In contrast, we are dealing with a much broader class of objectives that are possibly nonconvex-nonconcave. We point out that \citet{luo2020stochastic} recently introduced a variance-reduced algorithm for finding the stationary point of nonconvex-strongly-concave problems, which is again different from our setting.

% have been applied to strongly-convex-strongly-concave functions and achieve linear convergence \citep{palaniappan2016stochastic, luo2019stochastic}. However, \citet{palaniappan2016stochastic} requires one component and \citet{luo2019stochastic} requires each component to be strongly-convex-strongly-concave and allow computation of proximal operators. 
% \citet{du2019linear} combined SGDA with SVRG and obtained linear convergence on convex-strongly-concave function with full-rank coupling term, but they did not give the complexity with explicit dependency on the condition number. 
% It is until recently that variance reduction techniques are used outside of convex-concave regime, as \citet{luo2020stochastic} used an algorithm called SREDA to solve nonconvex-strongly-concave problems, which is very different from our settings.

The rest of this paper is organized as follows. In Section \ref{Sec2}, we introduce the two-sided PL condition and show the equivalence of three min-max optimality criteria under this condition. In Section \ref{Sec3}, we describe deterministic and stochastic AGDA algorithms, and provide convergence analyses of those algorithms under the two-sided PL condition. In Section \ref{Sec4}, we introduce the variance-reduced AGDA algorithm and establish its convergence results.  In Section \ref{Sec5}, we provide numerical performance of these algorithms for robust least square and imitation learning for LQR.

\section{Global optima and two-sided PL condition} \label{Sec2}

\noindent Throughout this paper, we assume that the function $f(x,y)$ in  (\ref{objective}) is continuously differentiable and has Lipschitz gradient. We state it as a basic assumption.  Here $\|\cdot\|$ is used to denote the Euclidean norm. 
\begin{assumption} [Lipschitz gradient]
There exists a positive constant $l>0$ such that
\begin{align*}
     &\left\| \nabla _ { x } f \left( x _ { 1 } , y _ {1} \right) - \nabla _ { x } f \left( x _ { 2 } , y _ {2} \right) \right\| \leq l [\left\| x _ { 1 } - x _ { 2 } \right\| + \left\| y _ { 1 } - y _ { 2 } \right\|],\\
    &\left\| \nabla _ { y } f \left( x _{1} , y _ { 1 } \right) - \nabla _ { y } f \left( x_ {2} , y _ { 2 } \right) \right\| \leq l [\left\| x _ { 1 } - x _ { 2 } \right\| + \left\| y _ { 1 } - y _ { 2 } \right\|],
\end{align*} \label{Lipscthitz gradient}
holds for all $x_1, x_2\in \mathbb{R}^{d_1}, y_1$, $y_2 \in \mathbb{R}^{d_2}$.
\end{assumption}

We now define three notions of optimality for minimax problems. The most direct notion of optimality is global minimax point, at which $x^*$ is an optimal solution to the function $g(x): = \max_{y}f(x, y)$ and $y^*$ is an optimal solution to $\max_{y} f(x^*, y)$. In the two-player zero-sum game, the notion of saddle point is also widely used \citep{von2007theory, nash1953two}. For a saddle point $(x^*, y^*)$, $x^*$ is an optimal solution to $\min_{x} f(x, y^*)$ and $y^*$ is an optimal solution to $\max_{y} f(x^*, y)$.

\begin{definition}[Global optima]\quad\begin{enumerate}
%\vspace{-1 mm}
\itemsep-0.9em 
    \item $(x^*, y^*)$ is a global minimax point, if for any $(x,y):$
     \vspace{-1 mm}
 \begin{equation}
     f(x^*,y) \leq f(x^*, y^*) \leq \max_{y'} f(x,y').
 \end{equation}
 \item  $(x^*, y^*)$ is a saddle point, if for any $(x,y):$
 %\vspace{-1 mm}
 \begin{equation}
    f(x^*,y) \leq f(x^*, y^*) \leq  f(x,y^*).
 \end{equation} 
 \item $(x^*, y^*)$ is a stationary point, if $:$
 %\vspace{-1 mm}
 \begin{equation}
     \nabla_xf(x^*,y^*) = \nabla_yf(x^*,y^*) = 0.
 \end{equation}
\end{enumerate}
\end{definition}

For general nonconvex-nonconcave minimax problems, these three notions of optimality are not necessarily equivalent. A stationary point may not be a saddle point or a global minimax point; a global minimax point may not be a saddle point or a stationary point. Note that generally speaking, for minimax problems, a saddle point or a global minimax point may not always exist. However, since our goal in this paper is to find global optima, in the remainder of the paper, we assume that a saddle point always exists. 

\begin{assumption} [Existence of saddle point]
The objective function $f$ has at least one saddle point. We also assume that for any fixed $y$, $\min_{x\in\mathbb{R}^{d_1}} f(x,y)$ has a nonempty solution set and a  optimal value, and for any fixed $x$, $\max_{y\in\mathbb{R}^{d_2}} f(x,y)$ has a nonempty solution set and a finite optimal value. \label{Existence of global minimax point}
\end{assumption}

For unconstrained minimization problems: $\min_{x\in\mathbb{R}^n} f(x)$, \citet{polyak1963gradient} proposed Polyak-{\L}ojasiewicz (PL) condition, which is sufficient to show global linear convergence for gradient descent without assuming convexity. Specifically, a function $f(\cdot)$ satisfies PL condition if it has a nonempty solution set and a finite optimal value $f^*$, and there exists some $\mu>0$ such that 
$    \frac{1}{2}\Vert \nabla f(x)\Vert^2 \geq \mu(f(x)-f^*), \forall x
$. As discussed in~\citet{karimi2016linear}, PL condition is weaker, or not stronger, than other well-known conditions that guarantee linear convergence for gradient descent, such as error bounds (EB) \citep{luo1993error}, 
% essential strong convexity (ESC) \citep{liu2015asynchronous}, 
weak strong convexity (WSC) \citep{necoara2018linear} and restricted secant inequality (RSI) \citep{zhang2013gradient}.

We introduce a straightforward generalization of the PL condition to the minimax problem: function $f(x, y)$ satisfies the PL condition with constant $\mu_1$ with respect to $x$, and -$f$ satisfies PL condition with constant $\mu_2$ with respect to $y$. We formally state this in the following definition.

\begin{definition} [Two-sided PL condition]
A continuously differentiable function $f(x,y)$ satisfies the two-sided PL condition if there exist constants $\mu_1, \mu_2>0$ such that:
\begin{align*} \label{two-sided PL}
    \Vert \nabla_xf(x,y) \Vert^2 \geq 2\mu_1 [f(x, y)-\min_{x} f(x,y)], \forall x,y,\\
   \Vert \nabla_yf(x,y) \Vert^2 \geq 2\mu_2 [\max_{y}f(x, y)-f(x,y)], \forall x,y.
\end{align*} 
\end{definition}
 
The two-sided PL condition does not imply convexity-concavity, and it is a much weaker condition than strong-convexity-strong-concavity.  In Lemma \ref{equivalent optimality}, we show that three notions of optimality are equivalent under the two-sided PL condition. Note that they may not be unique. 
% however remark that the optimal point is not necessarily unique, while strongly-convex-strongly-concave function has a unique saddle point.  
 
\begin{lemma} 
If the objective function $f(x,y)$ satisfies the two-sided PL condition, then the following holds true:
\begin{equation*}
    \text{(saddle point)} \Leftrightarrow \text{(global minimax)} \Leftrightarrow \text{(stationary point)}.
\end{equation*} \label{equivalent optimality}
\end{lemma}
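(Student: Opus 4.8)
\emph{Plan.} I would prove the two biconditionals at once by closing the cycle of implications
\[
\text{(saddle)} \;\Rightarrow\; \text{(global minimax)} \;\Rightarrow\; \text{(stationary)} \;\Rightarrow\; \text{(saddle)},
\]
which is logically sufficient and identifies the three solution sets. The first two ``easy'' arrows come first. For (saddle) $\Rightarrow$ (global minimax) nothing beyond the definitions is needed: the left inequality in the global-minimax definition is literally part of the saddle definition, and the right inequality follows by chaining $f(x^*,y^*)\le f(x,y^*)\le \max_{y'}f(x,y')$. For (stationary) $\Rightarrow$ (saddle) I would invoke the PL condition twice: from $\nabla_x f(x^*,y^*)=0$ the PL inequality in $x$ gives $0\ge 2\mu_1\big(f(x^*,y^*)-\min_x f(x,y^*)\big)$, hence $f(x^*,y^*)=\min_x f(x,y^*)\le f(x,y^*)$ for all $x$; symmetrically $\nabla_y f(x^*,y^*)=0$ together with the PL inequality in $y$ gives $f(x^*,y)\le f(x^*,y^*)$ for all $y$, and the two together are exactly the saddle-point inequalities.

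The real obstacle is (global minimax) $\Rightarrow$ (stationary); this is where the PL condition is genuinely indispensable, since for a general nonconvex-nonconcave $f$ a global minimax point need not be stationary (the envelope $g(x):=\max_y f(x,y)$ may be nonsmooth at $x^*$, with several inner maximizers). My plan here: let $(x^*,y^*)$ be a global minimax point; the left inequality shows $y^*$ maximizes $f(x^*,\cdot)$ over all of $\mathbb{R}^{d_2}$, so $\nabla_y f(x^*,y^*)=0$ and $f(x^*,y^*)=\max_y f(x^*,y)=g(x^*)$, and feeding this into the right inequality yields $g(x)\ge g(x^*)$ for all $x$, i.e.\ $x^*$ minimizes $g$. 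Then I argue by contradiction: suppose $v:=\nabla_x f(x^*,y^*)\neq 0$ and probe along $x(t)=x^*-tv$. A first-order expansion gives $f(x^*,y^*)-f(x(t),y^*)=t\|v\|^2+o(t)\ge \tfrac12 t\|v\|^2$ for all small $t>0$; combined with $g(x(t))\ge g(x^*)=f(x^*,y^*)$ this forces $g(x(t))-f(x(t),y^*)\ge \tfrac12 t\|v\|^2$. Plugging this into the PL inequality in $y$ at the point $(x(t),y^*)$ yields $\|\nabla_y f(x(t),y^*)\|\ge \sqrt{\mu_2 t}\,\|v\|$. On the other hand, Lipschitz continuity of the gradient (Assumption~\ref{Lipscthitz gradient}) together with $\nabla_y f(x^*,y^*)=0$ gives the competing bound $\|\nabla_y f(x(t),y^*)\|\le l\,t\|v\|$. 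Since $\sqrt{\mu_2 t}\,\|v\|$ dominates $l\,t\|v\|$ as $t\to 0^+$, this is impossible, so $\nabla_x f(x^*,y^*)=0$ and $(x^*,y^*)$ is stationary.

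The point I expect to need the most care is exactly this last step: one must avoid any appeal to differentiability of $g$ (which may fail) and instead use only the one-sided inequality $g(x(t))\ge f(x^*,y^*)$, and one must observe that the two bounds vanish at different orders --- the PL inequality in $y$ forces a gradient of size $\Theta(\sqrt t)$ while smoothness caps it at $O(t)$ --- which is precisely what produces the contradiction. Everything else (the two easy arrows, and, should one prefer to record the reverse implications directly rather than through the cycle, e.g.\ (stationary) $\Rightarrow$ (global minimax) via $f(x^*,y^*)=\min_x f(x,y^*)$) is routine bookkeeping with the definitions and the PL inequalities.
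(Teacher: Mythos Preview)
Your proof is correct. The two ``easy'' arrows --- (saddle) $\Rightarrow$ (global minimax) and (stationary) $\Rightarrow$ (saddle) --- are identical to the paper's. The difference lies in (global minimax) $\Rightarrow$ (stationary). The paper simply invokes Lemma~\ref{g smooth} (a Danskin-type result, cited from \citet{nouiehed2019solving}) which says that under the $y$-side PL condition and Assumption~\ref{Lipscthitz gradient}, the envelope $g(x)=\max_y f(x,y)$ is $L$-smooth with $\nabla g(x)=\nabla_x f(x,y^*(x))$ for any maximizer $y^*(x)$; then $x^*\in\arg\min_x g(x)$ immediately yields $0=\nabla g(x^*)=\nabla_x f(x^*,y^*)$. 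You instead avoid any appeal to differentiability of $g$ and give a direct self-contained contradiction, pitting the $\sqrt{\mu_2 t}\,\|v\|$ lower bound on $\|\nabla_y f(x(t),y^*)\|$ coming from PL against the $lt\|v\|$ upper bound from Lipschitz gradients. Your route is more elementary in that it does not lean on the external Danskin-type lemma; the paper's is a one-liner once that lemma is in hand. Note, incidentally, that in this setting $g$ \emph{is} differentiable by Lemma~\ref{g smooth}, so your caveat about its possible nonsmoothness is unnecessary here --- but your argument is valid regardless.
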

\vspace{-7 mm}
\noindent Below we give some examples that satisfy this condition.
\begin{example}
The nonconvex-nonconcave function in the introduction,
$
    f(x,y) = x^2 + 3\sin^2x\sin^2y-4y^2-10\sin^2y  
$
satisfies the two-sided PL condition with $\mu_1 = 1/16, \mu_2 = 1/11$ (see Appendix \ref{appendix1}). 
\end{example}

\begin{example} \label{ex 2}
$f(x, y) = F(Ax, By)$, where $F(\cdot, \cdot)$ is strongly-convex-strongly-concave and $A$ and $B$ are arbitrary matrices, satisfies the two-sided PL condition.
\end{example}

\begin{example} 
The generative adversarial imitation learning for LQR can be formulated as $\min_K\min_{\theta} m(K, \theta)$, where $m$ is strongly-concave in terms of $\theta$ and satisfies PL condition in terms of $K$ (see \citep{cai2019global} for more details), thus satisfying the two-sided PL condition. 
\end{example}

Under the two-sided PL condition, the function $g(x) := \max_{y}f(x, y)$  can be shown to satisfy PL condition with $\mu_1$ (see Appendix \ref{appendix1}). Moreover, it holds that $g$ is also $L$-smooth with $L: =l+ l^2/\mu_2$~\citep{nouiehed2019solving}. 
% For any saddle point $(x^*, y^*)$, $x^*$ is also a minimizer of $g$. If we could run gradient descent on $g$, it would converge linearly. However, the gradient $\nabla g(x)$ is not available, thus we turn to other algorithms. 
Finally, we denote $\mu=\min(\mu_1,\mu_2)$ and $\kappa=\frac{l}{\mu}$, which represents the condition number of the problem.

\section{Global convergence of AGDA and Stoc-AGDA}
\label{Sec3}

\noindent In this section, we establish the convergence rate of the stochastic alternating gradient descent ascent (Stoc-AGDA) algorithm, which we present in Algorithm \ref{s-agda}, under the two-sided PL condition. Stoc-AGDA updates variables $x$ and $y$ sequentially using stochastic gradient descent/ascent steps. Here we make standard assumptions about stochastic gradients $G_x(x, y, \xi)$ and $G_y(x, y, \xi)$.
\begin{assumption}[Bounded variance]\label{stochastic gradients} 
$G_x(x,y, \xi)$ and $G_y(x,y, \xi)$ are unbiased stochastic estimators of $\nabla_x f(x, y)$ and $\nabla_y f(x, y)$ and have variances bounded by $\sigma^2>0$.  
\end{assumption}

\begin{algorithm}[ht] 
    \caption{ Stoc-AGDA}
    \begin{algorithmic}[1]
        \STATE Input: $(x_0,y_0)$, step sizes $\{\tau_1^t\}_t>0, \{\tau_2^t\}_t>0$
        \FORALL{$t = 0,1,2,...$}
            \STATE Draw two i.i.d. samples $\xi_{t1}, \xi_{t2}\sim P(\xi)$ 
            \STATE $x_{t+1}\gets  x_t-\tau_1^t G_x(x_t,y_t, \xi_{t1})$
            \STATE $y_{t+1}\gets y_t+\tau_2^t G_y(x_{t+1},y_t, \xi_{t2})$
        \ENDFOR
    \end{algorithmic} \label{s-agda}
\end{algorithm}

Note that Stoc-AGDA with constant stepsizes (i.e., $\tau_1^t = \tau_1$ and $\tau_2^t = \tau_2$) and noiseless stochastic gradient (i.e., $\sigma^2=0$) reduces to AGDA: 
\vspace{-2mm}
 \begin{align}
 x_{t+1} &= x_t - \tau_1\nabla_xf(x_t, y_t), \\
 y_{t+1} &= y_t - \tau_2\nabla_y f(x_{t+1}, y_t).
 \end{align}
We will measure the inaccuracy of $(x_t,y_t)$ through the potential function
\begin{equation}
    P_t :=  a_t + \lambda\cdot b_t,
\end{equation}
where
$a_t = \mathbb{E}[g(x_t)-g^*], b_t = \mathbb{E}[g(x_t) - f(x_t, y_t)]$ and $\lambda>0$ to be specified later in the theorems. Recall that $g(x) := \max_y f(x,y)$ and $g^* = \min_xg(x)$. This metric is driven by the definition of minimax point, because $g(x)-g^*$ and $g(x) - f(x, y)$ are non-negative for any $(x,y)$, and both equal to 0 if and only if $(x,y)$ is a minimax point. 

\paragraph{Stoc-AGDA with constant stepsizes} We first consider Stoc-AGDA with constant stepsizes. We show that $\{(x_t, y_t)\}_t$ will converge linearly to a neighbourhood of the optimal set. 

\begin{theorem} \label{main} 
Suppose Assumptions \ref{Lipscthitz gradient}, \ref{Existence of global minimax point}, \ref{stochastic gradients} hold and $f(x, y)$ satisfies the two-sided PL condition with $\mu_1$ and $\mu_2$. Define $P_t : = a_t + \frac{1}{10}b_t$. If we run Algorithm \ref{s-agda} with $\tau_2^t = \tau_2 \leq \frac{1}{l}$ and $ \tau_1^t=\tau_1 \leq \frac{\mu_2^2\tau_2}{18l^2}$, then
\vspace{-2 mm}
\begin{align} 
    P_t \leq& (1-\frac{1}{2}\mu_1\tau_1)^tP_0 + \delta,  \label{s-agda convergence}
\end{align}
where $\delta=\frac{(1-\mu_2\tau_2)(L+l)\tau_1^2+l\tau_2^2+10L\tau_1^2}{10\mu_1\tau_1}\sigma^2.$
\end{theorem}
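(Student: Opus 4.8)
The plan is to establish a one-step contraction for the potential $P_t = a_t + \frac{1}{10}b_t$ by analyzing how the two quantities $a_t = \mathbb{E}[g(x_t)-g^*]$ and $b_t = \mathbb{E}[g(x_t)-f(x_t,y_t)]$ evolve under one iteration of Stoc-AGDA, then combine these two recursions with the choice $\lambda = \frac{1}{10}$ and unroll the resulting linear recursion. The two main ingredients I would prove as intermediate lemmas are: (i) a descent-type inequality for $a_t$, using that $g$ is $L$-smooth with $L = l + l^2/\mu_2$ and satisfies the PL inequality with modulus $\mu_1$ — the subtlety here is that the $x$-update uses a stochastic gradient of $f$, not of $g$, so I must control the bias $\|\nabla_x f(x_t,y_t) - \nabla g(x_t)\|$ in terms of the suboptimality $b_t$ via the PL/smoothness structure on the $y$-side (specifically $\|\nabla_x f(x,y)-\nabla g(x)\| = \|\nabla_x f(x,y) - \nabla_x f(x, y^*(x))\| \le l\|y - y^*(x)\| \le l\sqrt{2 b/\mu_2}$ pointwise); and (ii) a bound on how much $b_t$ can grow, which requires upper-bounding $g(x_{t+1})$ (using smoothness of $g$ along the $x$-step) and lower-bounding $f(x_{t+1},y_{t+1})$ (using that the $y$-step is an ascent step on $f(x_{t+1},\cdot)$, which is $l$-smooth, together with the PL inequality in $y$ with modulus $\mu_2$ to get a linear decrease of $g(x_{t+1})-f(x_{t+1},y_{t+1})$).

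Concretely, for $a_t$ I expect an inequality of the shape
\begin{equation*}
a_{t+1} \le \left(1 - c_1 \mu_1 \tau_1\right) a_t + c_2 l^2 \tau_1 b_t + c_3 L \tau_1^2 \sigma^2,
\end{equation*}
obtained by plugging the $x$-update into $g(x_{t+1}) \le g(x_t) + \langle \nabla g(x_t), x_{t+1}-x_t\rangle + \frac{L}{2}\|x_{t+1}-x_t\|^2$, taking expectations so the noise cross-term vanishes, bounding the noise second moment by $\sigma^2$, and using Young's inequality on the bias cross-term $\langle \nabla g(x_t), \nabla_x f(x_t,y_t)-\nabla g(x_t)\rangle$ to split off a $b_t$ term (absorbing the remaining $\|\nabla g(x_t)\|^2$ into the PL decrease). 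For $b_t$ I expect
\begin{equation*}
b_{t+1} \le \left(1 - \mu_2 \tau_2\right) b_t + (\text{small multiple of } \tau_1)\, a_t \text{ or } b_t \text{ terms} + (\text{noise}),
\end{equation*}
where the $(1-\mu_2\tau_2)$ factor comes from the ascent step in $y$, and the extra terms come from the change in $g(x_{t+1})$ relative to $g(x_t)$ caused by the $x$-step (again via $L$-smoothness of $g$), which is why the stepsize ratio $\tau_1 \le \frac{\mu_2^2 \tau_2}{18 l^2}$ is needed — it guarantees the leakage from the $x$-step into $b_t$ is dominated by the $\mu_2\tau_2$ contraction. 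Then $P_{t+1} = a_{t+1} + \frac{1}{10}b_{t+1} \le (1-\frac{1}{2}\mu_1\tau_1)P_t + \delta$ follows by matching coefficients: the $b_t$ coefficient in $a_{t+1}$ must be absorbed by the $\frac{1}{10}(1-\mu_2\tau_2)b_t$ budget (forcing a bound on $\tau_1/\tau_2$), and the residual $a_t$ coefficient must stay below $1-\frac{1}{2}\mu_1\tau_1$.

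The hard part, I expect, is calibrating the constants so that everything closes simultaneously: one must choose $\lambda = \frac{1}{10}$ and the stepsize constraints ($\tau_2 \le 1/l$, $\tau_1 \le \mu_2^2\tau_2/(18l^2)$) precisely so that (a) the cross-coupling $b_t \to a_{t+1}$ and $a_t \to b_{t+1}$ terms are simultaneously dominated, (b) the net $a_t$-contraction degrades only from $1-c_1\mu_1\tau_1$ to $1-\frac12\mu_1\tau_1$, and (c) the $\sigma^2$ terms collect into the stated $\delta = \frac{(1-\mu_2\tau_2)(L+l)\tau_1^2 + l\tau_2^2 + 10L\tau_1^2}{10\mu_1\tau_1}\sigma^2$ — note the appearance of $(L+l)\tau_1^2$ and $l\tau_2^2$ strongly suggests the $b_t$-recursion picks up both an $x$-step noise term (scaled by smoothness of $g$, giving $L+l$) and a $y$-step noise term (scaled by $l$), while the $10L\tau_1^2$ is the rescaled $a_t$-noise. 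Once the two scalar recursions with the correct constants are in hand, unrolling $P_t \le (1-\frac12\mu_1\tau_1)^t P_0 + \delta\sum_{k\ge 0}(1-\frac12\mu_1\tau_1)^k$ and bounding the geometric series by $\delta / (\frac12\mu_1\tau_1)$ — which is exactly the denominator $10\mu_1\tau_1$ after the factor-of-$\frac1{10}$ bookkeeping is folded in — gives the claimed bound. I would keep the bias-control lemma $\|\nabla_x f(x,y) - \nabla g(x)\|^2 \le 2l^2 b/\mu_2$ and the PL-for-$g$ fact (both referenced in the excerpt's Appendix~\ref{appendix1}) as black boxes to streamline the argument.
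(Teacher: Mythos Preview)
Your proposal is correct and follows essentially the same approach as the paper: establish a one-step contraction for $a_t+\lambda b_t$ by (i) using $L$-smoothness and PL of $g$ for the $a$-recursion with the bias $\|\nabla_x f(x_t,y_t)-\nabla g(x_t)\|^2\le \frac{2l^2}{\mu_2}b_t$, (ii) using the $y$-PL contraction plus the decomposition $g(x_{t+1})-f(x_{t+1},y_t)=b_t+[f(x_t,y_t)-f(x_{t+1},y_t)]+[g(x_{t+1})-g(x_t)]$ for the $b$-recursion, then specialize $\lambda=\tfrac{1}{10}$ and unroll. The only detail you leave implicit is that the $b$-recursion produces a raw $\|\nabla_x f(x_t,y_t)\|^2$ term (from bounding $f(x_t,y_t)-f(x_{t+1},y_t)$), which the paper handles via Young's inequality $\|\nabla_x f\|^2\le (1+\beta)\|\nabla g\|^2+(1+1/\beta)\|\nabla_x f-\nabla g\|^2$ with $\beta=1$ before applying PL and the bias bound; this is exactly the mechanism that makes the stepsize ratio $\tau_1\le \mu_2^2\tau_2/(18l^2)$ sufficient for both contraction factors to fall below $1-\tfrac12\mu_1\tau_1$.
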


\begin{remark}
In the theorem above, we choose $\tau_1$ smaller than $\tau_2$, $\tau_1/\tau_2\leq \mu_2^2/(18l^2)$, because our potential function is not symmetric about $x$ and $y$. Another reason is because we want $y_t$ to approach $y^*(x_t) \in \arg\max_yf(x_t,y)$ faster so that $\nabla_xf(x_t, y_t)$ is a better approximation for $\nabla g(x_t)$ ($\nabla g(x) = \nabla_xf(x, y^*(x))$, see \citet{nouiehed2019solving}). Indeed, it is common to use different learning rates  for $x$ and $y$ in GDA algorithms for nonconvex minimax problems; see e.g., \citet{jin2019local} and \citet{lin2019gradient}.
% However, AGDA with different learning rates is still very different from GDmax, which is easier to analyze but with higher complexity. 
Note that  the ratio between these two learning rates is quite crucial here. We also observe empirically when the same learning rate is used, even if small, the algorithm may not converge to saddle points.
% Because the optimal point is a saddle point, the problem (\ref{objective}) is symmetric, i.e. $\min_x\max_y f(x, y) = \max_y \min_x f(x, y)$. So we can also use larger learning rate for $x$, and change the potential function correspondingly to be $\Tilde{P}_t = \mathbb{E}[h^* - h(y_t)] + \lambda \mathbb{E}[f(x_t, y_t) - h(y_t)]$, where $h(y) = \min_x f(x, y)$ and $h^* = \max_y h(y)$. 
\end{remark}

\begin{remark}
When $t\rightarrow \infty$, $P_t\to \delta$. If $\tau_1\rightarrow 0$ and $\tau_2^2/\tau_1 \rightarrow 0$, the error term $\delta$ will go to 0. When using smaller stepsizes, the algorithm reaches a smaller neighbour of the saddle point yet at the cost of a slower rate, as the contraction factor also deteriorates.
\end{remark}

\paragraph{Linear convergence of AGDA}  Setting $\sigma^2=0$, it follows immediately from the previous theorem that AGDA converges linearly under the two-sided PL condition. Moreover, we have 
% It is worth pointing out that AGDA may not converge for general convex-concave minimax problems. 
% Hence,  the two-sided PL condition is an important structure that deserves to be extensively explored.

\begin{theorem} \label{main deterministic}
Suppose Assumptions \ref{Lipscthitz gradient}, \ref{Existence of global minimax point} hold and $f(x, y)$ satisfies the two-sided PL condition with $\mu_1$ and $\mu_2$. Define $P_t : = a_t + \frac{1}{10}b_t$. If we run AGDA with $\tau_1 = \frac{\mu_2^2}{18l^3}$ and $\tau_2 = \frac{1}{l}$,
then 
\begin{equation}
    P_t \leq \left(1-\frac{\mu_1\mu_2^2}{36l^3}\right)^tP_0.
\end{equation}
Furthermore, $\{(x_t, y_t)\}_t$ converges to some saddle point $(x^*, y^*)$, and 
\begin{equation}
    \left\|x_{t}-x^{*}\right\|^{2}+\left\|y_{t}-y^{*}\right\|^{2} \leq \alpha \left(1-\frac{\mu_1\mu_2^2}{36l^3}\right)^tP_0,
\end{equation}
where $\alpha$ is a constant depending on $\mu_1, \mu_2$ and $l$. 
\end{theorem}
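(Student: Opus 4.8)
### Proof Proposal for Theorem \ref{main deterministic}

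The plan is to build on Theorem \ref{main} with $\sigma^2 = 0$, which immediately gives the linear decay $P_t \le (1 - \tfrac12\mu_1\tau_1)^t P_0$ for the stated stepsizes; plugging in $\tau_1 = \mu_2^2/(18l^3)$ yields the contraction factor $1 - \mu_1\mu_2^2/(36l^3)$ in the first displayed inequality. So the genuinely new content is the second part: upgrading convergence of the potential $P_t$ to actual convergence of the iterates $\{(x_t,y_t)\}$ to a specific saddle point, together with the quantitative bound on $\|x_t - x^*\|^2 + \|y_t - y^*\|^2$.

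First I would control the one-step movement of the iterates by the potential. From the AGDA updates, $\|x_{t+1} - x_t\| = \tau_1\|\nabla_x f(x_t,y_t)\|$ and $\|y_{t+1}-y_t\| = \tau_2\|\nabla_y f(x_{t+1},y_t)\|$. Using the two-sided PL inequality in the form $\|\nabla_x f(x_t,y_t)\|^2 \ge 2\mu_1[f(x_t,y_t) - \min_x f(x,y_t)]$ is not quite what I want directly — instead I would bound $\|\nabla_x f(x_t,y_t)\|^2$ in terms of $b_t$ and $a_t$. A cleaner route: recall $g$ is $L$-smooth and satisfies the PL inequality with $\mu_1$, so $\|\nabla g(x_t)\|^2 \le 2L(g(x_t)-g^*) = 2L\, a_t$; and $\nabla_x f(x_t,y_t)$ differs from $\nabla g(x_t) = \nabla_x f(x_t, y^*(x_t))$ by at most $l\|y_t - y^*(x_t)\|$, while $\|y_t - y^*(x_t)\|^2 \le \tfrac{2}{\mu_2}[g(x_t) - f(x_t,y_t)] = \tfrac{2}{\mu_2} b_t$ by the PL inequality in $y$ (quadratic growth). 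Hence $\|x_{t+1}-x_t\|^2 \lesssim \tau_1^2(a_t + b_t) \lesssim \tau_1^2 P_t$, and similarly $\|\nabla_y f(x_{t+1},y_t)\|^2$ is controlled by $b_{t+1}$ and the $x$-movement, giving $\|y_{t+1}-y_t\|^2 \lesssim \tau_2^2 P_t$ (after absorbing an extra smoothness term). Combining, $\|x_{t+1}-x_t\| + \|y_{t+1}-y_t\| \le C_1 \sqrt{P_t} \le C_1 \sqrt{P_0}\,(1 - \mu_1\mu_2^2/(36l^3))^{t/2}$ for a constant $C_1 = C_1(\mu_1,\mu_2,l)$.

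The sequence $\{(x_t,y_t)\}$ is therefore Cauchy (the one-step displacements are summable, being dominated by a geometric series with ratio $\sqrt{1 - \mu_1\mu_2^2/(36l^3)} < 1$), so it converges to some limit $(x^*,y^*)$. Since $P_t \to 0$, we have $a_t \to 0$ and $b_t \to 0$, i.e. $g(x^*) = g^*$ and $f(x^*,y^*) = g(x^*)$, which by the definition of the potential and Lemma \ref{equivalent optimality} (or a direct argument) makes $(x^*,y^*)$ a saddle point. For the rate on $\|x_t - x^*\|^2 + \|y_t - y^*\|^2$, I would write $\|x_t - x^*\| \le \sum_{k \ge t} \|x_{k+1}-x_k\| \le C_1\sqrt{P_0} \sum_{k\ge t} \rho^{k/2}$ with $\rho = 1 - \mu_1\mu_2^2/(36l^3)$, which equals $C_1\sqrt{P_0}\,\rho^{t/2}/(1-\sqrt\rho)$; squaring and adding the analogous $y$-bound gives $\|x_t-x^*\|^2 + \|y_t-y^*\|^2 \le \alpha\, \rho^t P_0$ with $\alpha = \alpha(\mu_1,\mu_2,l)$ absorbing $C_1^2/(1-\sqrt\rho)^2$ and a factor of $2$.

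The main obstacle is the first step: carefully bounding $\|\nabla_x f(x_t,y_t)\|$ and $\|\nabla_y f(x_{t+1},y_t)\|$ by $\sqrt{P_t}$ with clean constants. The $x$-gradient needs the triangle-inequality decomposition through $\nabla g(x_t)$ plus the quadratic-growth consequence of PL in $y$; the $y$-gradient bound requires relating $\|\nabla_y f(x_{t+1},y_t)\|^2$ to $b_{t+1} = \mathbb{E}[g(x_{t+1}) - f(x_{t+1},y_t)]$ — but note that the AGDA ascent step uses $\nabla_y f(x_{t+1},y_t)$, so after the step $y_{t+1}$ is a gradient-ascent update on the (PL) function $f(x_{t+1},\cdot)$, and standard PL descent-lemma bookkeeping gives $\tau_2^2\|\nabla_y f(x_{t+1},y_t)\|^2$ comparable to the decrease it produces, hence to $b$-type quantities. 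One must also handle the dependence of $g^*$ and $y^*(\cdot)$ without assuming uniqueness, but since we only need \emph{some} limiting saddle point this causes no trouble. Once these per-step bounds are in hand, the Cauchy/summation argument and the final constant $\alpha$ are routine.
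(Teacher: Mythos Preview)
Your proposal is correct and follows essentially the same architecture as the paper's proof: invoke Theorem~\ref{main} with $\sigma^2=0$ for the first claim, then bound the one-step displacement $\|x_{t+1}-x_t\|+\|y_{t+1}-y_t\|$ by $C\sqrt{P_t}$ via gradient bounds (decomposing $\nabla_x f(x_t,y_t)$ through $\nabla g(x_t)$ and using quadratic growth in $y$), sum the resulting geometric series to show $\{(x_t,y_t)\}$ is Cauchy with limit a saddle point, and read off the rate from the tail sum. The only notable difference is cosmetic: you bound $\|\nabla g(x_t)\|^2\le 2L\,a_t$ directly from $L$-smoothness of $g$, whereas the paper routes through $\|\nabla g(x_t)-\nabla g(x^*)\|\le L\|x_t-x^*\|$ and quadratic growth to get $\|\nabla g(x_t)\|^2\le (2L^2/\mu_1)\,a_t$; your bound is tighter, but both feed into the identical Cauchy/tail-sum argument.
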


% The first half of Theorem \ref{main deterministic} states that the potential function has a contraction, which is a direct corollary of Theorem \ref{main} by setting $\sigma^2 = 0$. The second half of
 The above theorem implies that the limit point of $\{(x_t, y_t)\}_t$ is a saddle point and the distance to the saddle point decreases in the order of $\mathcal{O}\left((1-\kappa^{-3})^t\right)$. Note that in the special case when the objective is strongly-convex-strongly-concave, it is known that SGDA (GDA with simultaneous updates) achieves an $\mathcal{O}(\kappa^2\log(1/\epsilon))$ iteration complexity (see, e.g., ~\citet{facchinei2007finite}) and this can be further improved to $\mathcal{O}(\kappa\log(1/\epsilon))$ by extragradient methods~\citep{korpelevich1976extragradient}, Nesterov's dual extrapolation~\citep{nesterov2006solving} or accelerated proximal point algorithm \citep{lin2020near}.
%  In the strongly-convex-strongly-concave regime which is included by the two-sided PL condition, AGDA is underrepresented and SGDA can achieve $\mathcal{O}(\kappa^2\log(1/\epsilon))$ rate \citep{facchinei2007finite} with repsect to $\Vert x-x^*\Vert^2+\Vert y-y^*\Vert^2$; 
However, these result relies heavily on the strong monotonicity of the corresponding variational inequality. For  the general two-sided PL condition,  we may not achieve the same dependency on $\kappa$. 

% It is also worth noticing that \citet{du2019linear} recently shows SGDA can achieve $\mathcal{O}(\kappa^7\log(1/\epsilon))$ complexity in convex-strongly-concave minimax problems with full rank coupling matrix. We should also note that error measures and condition numbers in these works are not totally equivalent.  
 
%  {\color{red} Add comparison to the strongly convex-concave case (references for upper/lower bound) and the non-strongly-convex case in Simon Du's paper, notify the difference between $\kappa^3$ and $\kappa^2$, and $\kappa^?$. Perhaps explain why $\kappa^3$ is inevitable?}

\paragraph{Stoc-AGDA with diminishing stepsizes} While Stoc-AGDA with constant stepsizes only converges linearly to a neighbourhood of the saddle point, Stoc-AGDA with diminishing stepsizes converges to the saddle point but at a sublinear rate $\mathcal{O}(1/t)$. 

\begin{theorem} \label{main diminishing}
Suppose Assumptions \ref{Lipscthitz gradient}, \ref{Existence of global minimax point}, \ref{stochastic gradients} hold and $f(x, y)$ satisfies the two-sided PL condition with $\mu_1$ and $\mu_2$.  Define $P_t  = a_t + \frac{1}{10}b_t$. If we run algorithm \ref{s-agda} with stepsizes $\tau_1^t = \frac{\beta}{\gamma+t}$ and $\tau_2^t = \frac{18l^2\beta}{\mu_2^2(\gamma+t)}
$
for some $\beta > 2/\mu_1$ and $\gamma>0$ such that $\tau_1^1 \leq \min\{1/L, \mu_2^2/18l^2\}$, then we have 
\begin{equation} \label{stoc-agda rate diminishing}
    P_t \leq \frac{\nu}{\gamma+t},
\end{equation}
where $\nu :=$
\begin{equation*}
     \max\Big\{\gamma P_0, \frac{\big[  (L+l)\beta^2+18^2l^5\beta^2/\mu_2^4+10L\beta^2 \big]\sigma^2}{10\mu_1\beta - 20}  \Big\}.
\end{equation*}
\end{theorem}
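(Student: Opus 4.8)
The plan is to build on the one-step recursion established in the proof of Theorem \ref{main} and then run a standard induction argument for $\mathcal{O}(1/t)$ rates with diminishing stepsizes. First I would extract from the analysis underlying Theorem \ref{main} the non-asymptotic per-iteration inequality that holds for \emph{any} admissible pair of stepsizes $(\tau_1^t,\tau_2^t)$ with $\tau_2^t \le 1/l$ and $\tau_1^t \le \mu_2^2\tau_2^t/(18l^2)$; it should have the form
\begin{equation*}
    P_{t+1} \le \left(1 - \tfrac{1}{2}\mu_1\tau_1^t\right)P_t + C\,(\tau_1^t)^2\sigma^2,
\end{equation*}
where $C = C(L,l)$ collects the constants $(L+l)$, $10L$, and the factor $18^2 l^5/\mu_2^4$ coming from $(\tau_2^t)^2 = (18l^2\tau_1^t/\mu_2^2)^2$ once we substitute the chosen ratio $\tau_2^t = 18l^2\tau_1^t/\mu_2^2$. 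The point of fixing this ratio is precisely that it lets us bound $l(\tau_2^t)^2 + (1-\mu_2\tau_2^t)(L+l)(\tau_1^t)^2 + 10L(\tau_1^t)^2$ by a single constant times $(\tau_1^t)^2$, matching the numerator appearing in the definition of $\nu$.

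Next I would plug in $\tau_1^t = \beta/(\gamma+t)$ and prove $P_t \le \nu/(\gamma+t)$ by induction on $t$. The base case $t=0$ is handled by the first term $\gamma P_0$ in the $\max$ defining $\nu$ (together with $\tau_1^1 \le \min\{1/L,\mu_2^2/18l^2\}$ ensuring all stepsizes are admissible, since $\tau_1^t$ is decreasing in $t$). For the inductive step, assume $P_t \le \nu/(\gamma+t)$ and write $\hat t := \gamma+t$. Then
\begin{equation*}
    P_{t+1} \le \left(1 - \frac{\mu_1\beta}{2\hat t}\right)\frac{\nu}{\hat t} + \frac{C\beta^2\sigma^2}{\hat t^2}
    = \frac{\hat t - \mu_1\beta/2}{\hat t^2}\,\nu + \frac{C\beta^2\sigma^2}{\hat t^2}.
\end{equation*}
Using $\hat t^2 \ge (\hat t-1)(\hat t+1)$ so that $(\hat t - 1)/\hat t^2 \le 1/(\hat t+1)$, it suffices to show $(\mu_1\beta/2 - 1)\nu \ge C\beta^2\sigma^2$, i.e. $\nu \ge C\beta^2\sigma^2/(\mu_1\beta/2 - 1) = 2C\beta^2\sigma^2/(\mu_1\beta - 2)$; this is exactly the second term in the $\max$ defining $\nu$, and it requires $\beta > 2/\mu_1$, which is the stated hypothesis. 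Chaining these bounds gives $P_{t+1} \le \nu/(\hat t+1) = \nu/(\gamma+t+1)$, closing the induction.

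The main obstacle is the first step: carefully reworking the descent estimate from Theorem \ref{main} so that the error term is genuinely $O((\tau_1^t)^2)$ with a \emph{stepsize-independent} constant, which forces the specific coupling $\tau_2^t/\tau_1^t = 18l^2/\mu_2^2$ to be maintained at every iteration and requires re-checking that the cross terms involving $\nabla_x f$ evaluated at $(x_{t+1},y_t)$ versus the surrogate gradient $\nabla g(x_t)$ are still controlled when $\tau_1^t,\tau_2^t$ vary with $t$ — in particular that the smoothness-based bounds $b_{t+1}$-to-$b_t$ and $a_{t+1}$-to-$a_t$ go through verbatim with $t$-indexed stepsizes. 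Once that recursion is in hand, the induction is routine. A minor point to verify along the way is that $\tau_1^t \le 1/L$ and $\tau_1^t \le \mu_2^2/(18l^2)$ for all $t \ge 1$, which follows since $\gamma>0$ makes $\tau_1^t$ nonincreasing and the hypothesis pins down $\tau_1^1$.
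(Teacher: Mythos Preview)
Your proposal is correct and follows essentially the same route as the paper: the paper invokes its one-step contraction estimate (stated there as a separate ``contraction theorem'' underlying Theorem~\ref{main}) with $\lambda=1/10$ to get $P_{t+1}\le(1-\tfrac12\mu_1\tau_1^t)P_t+C(\tau_1^t)^2\sigma^2$ after substituting $\tau_2^t=18l^2\tau_1^t/\mu_2^2$, and then runs exactly the induction you describe, splitting $(\gamma+t-\tfrac12\mu_1\beta)/(\gamma+t)^2$ as $(\gamma+t-1)/(\gamma+t)^2-(\tfrac12\mu_1\beta-1)/(\gamma+t)^2$ and using $(\gamma+t-1)(\gamma+t+1)\le(\gamma+t)^2$. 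Your worry about the ``main obstacle'' is unfounded: the contraction theorem is already proved for an arbitrary single step with any admissible $(\tau_1,\tau_2)$, so it applies verbatim with $t$-indexed stepsizes and no rework of the $a_t,b_t$ bounds is needed.
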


\begin{remark} \label{remark diminishing}
Note the rate is affected by $\nu$, and the first term in the definition of $\nu$ is controlled by the initial point. In practice, we can find a good initial point by running Stoc-AGDA with constant stepsizes so that only the second term in the definition of $\nu$ matters. Then by choosing $\beta = 3/\mu_1$, we have $\nu = \mathcal{O}\left(\frac{l^5\sigma^2}{\mu_1^2\mu_2^4} \right)$. Thus, the convergence rate of Stoc-AGDA is  $\mathcal{O}\left(\frac{\kappa^5\sigma^2}{\mu  t}\right)$.
\end{remark}

\section{Stochastic variance reduced algorithm}
\label{Sec4}

\noindent In this section, we study the minimax problem in (\ref{objective finite sum}) with the finite-sum structure:
\begin{eqnarray*} 
    \min_x \max_y f(x,y) = \frac{1}{n}\sum_{i=1}^n f_i(x,y),
\end{eqnarray*}
which arises ubiquitously in machine learning. We are especially interested in the case when $n$ is large. We assume the overall objective function $f(x,y)$ still satisfies the two-sided PL condition with $\mu_1$ and $\mu_2$, but we do not assume each $f_i$ to satisfy the two-sided PL condition.  Instead of Assumption~\ref{Lipscthitz gradient}, we now assume each component $f_i$ has Lipschitz gradients. 
\begin{assumption} \label{Lipscthitz gradient 2}
Each $f_i$ has l-Lipschitz gradients. 
\end{assumption}

If we run AGDA with full gradients to solve the finite-sum minimax problem, the total complexity for finding an $\epsilon$-optimal solution is $\mathcal{O}(n\kappa^3\log(1/\epsilon))$ by Theorem \ref{main deterministic}. Despite the linear convergence, the per-iteration cost is high and the complexity can be huge when the number of components $n$ and condition number $\kappa$ are large. 
% Another option is Stoc-AGDA: 
% $
%     x_{t+1} = x_t - \tau_1^t\nabla f_{i_t^1}(x_t, y_t), \enskip y_{t+1} = y_t - \tau_2^t\nabla f_{i_t^2}(x_{t+1}, y_t)
% $, where $i_t^1, i_t^2$ are two indexes independently sampled from $[n]$ in iteration $t$.
Instead, if we run Stoc-AGDA, this leads to 
% -Because the stochastic gradients $\nabla f_{i_t^1}(x_t, y_t), \nabla f_{i_t^2}(x_{t+1}, y_t)$ have some non-diminishing variance $\sigma^2>0$, we have to use diminishing stepsizes in Stoc-AGDA, 
the total complexity $\mathcal{O}\left(\frac{\kappa^5\sigma^2}{\mu_2\epsilon} \right)$ by Remark \ref{remark diminishing}, which has worse dependence on $\epsilon$.

\begin{algorithm}[t]
    \caption{ VR-AGDA}
    \begin{algorithmic}[1]
        \STATE input: $(\Tilde{x}_0, \Tilde{y}_0)$, stepsizes $\tau_1, \tau_2$,  iteration numbers $N, T$
        \FORALL{$k = 0,1,2,...$}
        \FORALL{$t = 0,1,2,...T-1$}
            \STATE $x_{t,0} = \Tilde{x}_t,\quad y_{t,0} = \Tilde{y}_t$,
            \STATE compute $\nabla_x f(\Tilde{x}_t, \Tilde{y}_t) = \frac{1}{n}\sum_{i=1}^n \nabla_x f_i(\Tilde{x}_t, \Tilde{y}_t)$
            \STATE compute $\nabla_y f(\Tilde{x}_t, \Tilde{y}_t) = \frac{1}{n}\sum_{i=1}^n \nabla_y f_i(\Tilde{x}_t, \Tilde{y}_t)$
            \FORALL{$j = 0$ to $N-1$}
                \STATE sample  i.i.d. indices $i_j^1, i_j^2$ uniformly from $[n]$
                \STATE  $ x_{t, j+1} = x_{t,j}-\tau_1 [\nabla_x f_{i_j^1}(x_{t,j},y_{t,j}) -  \nabla_x f_{i_j^1}(\Tilde{x}_t, \Tilde{y}_t) +\nabla_x f(\Tilde{x}_t, \Tilde{y}_t)] $ 
                 \STATE $y_{t, j+1} = y_{t,j}+\tau_2 [\nabla_y f_{i_j^2}(x_{t,j+1}, y_{t,j}) -\nabla_y f_{i_j^2}(\Tilde{x}_t, \Tilde{y}_t) +\nabla_y f(\Tilde{x}_t, \Tilde{y}_t)]$ \label{step10}
            \ENDFOR
            \STATE $\Tilde{x}_{t+1} = x_{t, N},\quad \Tilde{y}_{t+1} = y_{t, N}$  
        \ENDFOR
        \STATE choose $(x^k, y^k)$ from  $\{\{(x_{t,j}, y_{t,j})\}_{j=0}^{N-1}\}_{t = 0}^{T-1}$ uniformly at random 
        \STATE $\Tilde{x}_0 = x^k,\quad \Tilde{y}_0 = y^k$
        \ENDFOR
    \end{algorithmic} \label{svrg}
\end{algorithm}

Motivated by the recent success of stochastic variance reduced gradient (SVRG) technique~\citep{johnson2013accelerating,reddi2016stochastic,palaniappan2016stochastic}, we introduce the VR-AGDA algorithm (presented in Algorithm \ref{svrg}), that combines AGDA with SVRG so that the linear convergence is preserved while improving the dependency on $n$ and $\kappa$. 
% As indicated by its name, VR-AGDA combines AGDA algorithm with the stochastic variance reduction technique introduced by \citet{johnson2013accelerating}. 
VR-AGDA can be viewed as the applying SVRG to AGDA with restarting: at every epoch $k$, we restart the SVRG subroutine (with $T$ outer iterations, $N$ inner steps) by initializing it with $(x^k, y^k)$, which is randomly selected from previous SVRG subroutine. This is partly inspired by the GD-SVRG algorithm for minimizing PL functions \citep{reddi2016stochastic}. Notice when $T=1$, VR-AGDA reduces to a double-loop algorithm which is similar to the  SVRG for saddle point problems proposed by \citet{palaniappan2016stochastic}, except for several notable differences: (i) we are using the alternating updates rather than simultaneous updates,  (ii) as a result, we require to sample two independent indices rather than one at each iteration, and (iii) most importantly, we are dealing with possibly nonconvex-nonconcave objectives that satisfy the two-sided PL condition. 

% Looking into Algorithm \ref{svrg}, a new epoch can be considered as a restart of two inner layers of loops after picking $(x^k, y^k)$ from previous $NT$ iterates, which is a design inspired by GD-SVRG algorithm for minimization of PL function \citep{reddi2016stochastic}. VR-AGDA calculates full gradients every $N$ iterations, which amounts to $2nT$ first oracle calls in one epoch. In the inner most loop, one iteration requires $2$ computations of gradients. The total number of first oracle calls in one epoch is thus $\Theta(nT+NT)$.

% \citet{palaniappan2016stochastic} shows the linear convergence rate of a Prox-SVRG type algorithm with simultaneous updates for strongly-convex-strongly-concave minimax problem. Different from simultaneous updates, VR-AGDA samples two indexes rather than one in each iteration, which is required to guarantee the stochastic gradients we use in the step 9 and step 10 of Algorithm \ref{svrg} are unbiased. 

% The intuition behind VR-AGDA is that variances of the stochastic gradients are diminishing so that we can use constant stepsizes. We will show in Appendix \ref{appendix3} that the variances are bounded by $l^2 \mathbb{E}\|x_{t,j} -\Tilde{x_t}\|^2 + l^2\mathbb{E}\|y_j - \Tilde{y}\|^2$ and $l^2\mathbb{E}\| x_{j+1} - \Tilde{x}\|^2 + l^2 \mathbb{E}\|y_j  - \Tilde{y}\|^2$ respectively. As $(x_{t,j}, y_{t,j})$ and $(\Tilde{x}_t, \Tilde{y}_t)$ both converge to the saddle point, the variances vanish. 

The following two theorems capture the convergence of VR-AGDA. 
% which has lower complexity than AGDA.
%%\begin{align*}
%%    \mathbb{E} G_x(x_{t,j}, y_{t,j}, i_j^1) &= %%\nabla_xf(x_{t,j}, y_{t,j}), \\
%%    \mathbb{E}G_y(x_{t+1,j}, y_{t,j}, i_j^2) &= %%\nabla_yf(x_{t+1,j}, y_{t,j}),
%%\end{align*}
%where $G_x(x_{t,j}, y_{t,j}, i_j^1) = \nabla_x f_{i_j^1}(x_{t,j},y_{t,j}) -  \nabla_x f_{i_j^1}(\Tilde{x}_t, \Tilde{y}_t) +\nabla_x f(\Tilde{x}_t, \Tilde{y}_t)$   and $G_y(x_{t+1,j}, y_{t,j}, i_j^2) = \nabla_y f_{i_j^2}(x_{t,j+1}, y_{t,j}) - \nabla_y f_{i_j^2}(\Tilde{x}_t, \Tilde{y}_t) +\nabla_y f(\Tilde{x}_t, \Tilde{y}_t)$.

\begin{theorem} \label{svrg thm 2}
Suppose Assumptions \ref{Existence of global minimax point} and \ref{Lipscthitz gradient 2} hold and $f(x, y)$ satisfies the two-sided PL condition with $\mu_1$ and $\mu_2$. Define $P_k = a^k + \frac{1}{20}b^k$, where $a^k = \mathbb{E}[g(x^k)-g^*]$ and $b^k = \mathbb{E}[g(x^k) - f(x^k, y^k)]$. If we run VR-AGDA with $\tau_1 =  \beta/(28\kappa^8l)$, $\tau_2 = \beta/(l\kappa^6)$, $N =\lfloor \alpha\beta^{-2/3}\kappa^{9}(2+4\beta^{1/2}\kappa^{-3})^{-1}\rfloor$ and $T=1$, where $\alpha, \beta$ are constants irrelevant to $l,n, \mu_1, \mu_2$,  then
% \begin{equation*}
    $P_{k+1} \leq \frac{1}{2}P_k.$
% \end{equation*}
This  further implies a total complexity of $$\mathcal{O}\big((n+\kappa^9)\log(1/\epsilon)\big)$$ for VR-AGDA to achieve an $\epsilon$-optimal solution.
\end{theorem}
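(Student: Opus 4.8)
The plan is to analyze a single epoch of VR-AGDA---which, since $T=1$, consists of one inner SVRG loop of length $N$ launched from the snapshot $(\tilde x_0,\tilde y_0)=(x_{0,0},y_{0,0})$---and show it contracts the potential $P_k$ by a factor $\tfrac12$. I will use throughout the two facts recorded in Section \ref{Sec2}: $g(x):=\max_y f(x,y)$ is $L$-smooth with $L=l+l^2/\mu_2$ and PL with constant $\mu_1$, so $\|\nabla g(x)\|^2\ge 2\mu_1(g(x)-g^*)$; and $\|\nabla_y f(x,y)\|^2\ge 2\mu_2\,(g(x)-f(x,y))$. Two further consequences---the quadratic-growth property of PL functions and Danskin's identity $\nabla g(x)=\nabla_x f(x,y^*(x))$---give $\dist(y,Y^*(x))^2\le \tfrac{2}{\mu_2}(g(x)-f(x,y))$, hence $\|\nabla_x f(x,y)-\nabla g(x)\|^2\le \tfrac{2l^2}{\mu_2}(g(x)-f(x,y))$. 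Write $a_{0,j}=\mathbb{E}[g(x_{0,j})-g^*]$, $b_{0,j}=\mathbb{E}[g(x_{0,j})-f(x_{0,j},y_{0,j})]$, and $V_j=\mathbb{E}[\|x_{0,j}-\tilde x_0\|^2+\|y_{0,j}-\tilde y_0\|^2]$.

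First I would derive a per-step inequality for $a_{0,j}$: applying $L$-smoothness of $g$ along the $x$-step, using that the SVRG estimator is conditionally unbiased for $\nabla_x f(x_{0,j},y_{0,j})$, and replacing $\nabla_x f(x_{0,j},y_{0,j})$ by $\nabla g(x_{0,j})$ at the cost of the bias bound above, one gets
\[
 a_{0,j+1}\ \le\ a_{0,j}\ -\ c_1\tau_1\,\mathbb{E}\|\nabla g(x_{0,j})\|^2\ +\ c_2\tfrac{\tau_1 l^2}{\mu_2}\,b_{0,j}\ +\ c_3 L\tau_1^2\,\mathbb{E}\big(\text{second moment of the }x\text{-estimator}\big),
\]
where the last factor is itself $\mathcal{O}\big(\mathbb{E}\|\nabla g(x_{0,j})\|^2+\tfrac{l^2}{\mu_2}b_{0,j}+l^2 V_j\big)$. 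Next, a per-step inequality for $b_{0,j}$: the $y$-ascent step, analyzed with $l$-smoothness of $f(x,\cdot)$ and the PL inequality in $y$, contracts $b$ by a factor $(1-c_4\mu_2\tau_2)$ up to $\mathcal{O}(l\tau_2^2 V_j)$, while the preceding $x$-step changes $b=g(x)-f(x,y)$ by at most $\mathcal{O}\!\big(\tau_1\mathbb{E}\|\nabla g(x_{0,j})\|^2+\tfrac{\tau_1 l^2}{\mu_2}b_{0,j}+(L+l)\tau_1^2(\cdots)\big)$, using smoothness of $g$ and of $f(\cdot,y)$ and the bias bound; the choice $\tau_1=\beta/(28\kappa^8 l)$ is exactly what makes $\tfrac{\tau_1 l^2}{\mu_2}$ a small fraction of $\mu_2\tau_2=(\beta/(l\kappa^6))(\mu_2/l)$, so this perturbation does not destroy the $b$-contraction. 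Taking the combination $\Phi_{0,j}:=a_{0,j}+\tfrac1{20}b_{0,j}$, with the coefficient $\tfrac1{20}$ calibrated so the $b$-perturbation absorbs into the $a$-descent, yields
\[
 \Phi_{0,j+1}\ \le\ \Phi_{0,j}\ -\ \rho_1\tau_1\,\mathbb{E}\|\nabla g(x_{0,j})\|^2\ -\ \rho_2\tau_2\,b_{0,j}\ +\ \rho_3(\tau_1^2+\tau_2^2)\,l^2 V_j ,
\]
with $\rho_1,\rho_2,\rho_3>0$ depending only on $l,\mu_1,\mu_2$.

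The second ingredient bounds the accumulated drift. From $\|x_{0,j}-\tilde x_0\|\le \tau_1\sum_{\ell<j}\|\text{estimator}_\ell\|$ and the analogue for $y$, the second-moment bound $\mathbb{E}\|\text{estimator}_\ell\|^2\le 2\,\mathbb{E}\|\nabla_\bullet f(\cdots)\|^2+2l^2 V_\ell$, and summation, one obtains (as long as $N^2 l^2(\tau_1^2+\tau_2^2)$ is below an absolute constant, which the stated $\tau_1,\tau_2,N$ guarantee)
\[
 \sum_{j=0}^{N-1}V_j\ \le\ C\,N^2(\tau_1^2+\tau_2^2)\sum_{j=0}^{N-1}\Big(\mathbb{E}\|\nabla g(x_{0,j})\|^2+\mathbb{E}\|\nabla_y f(x_{0,j},y_{0,j})\|^2+\tfrac{l^2}{\mu_2}b_{0,j}\Big).
\]
Plugging this into the telescoped $\Phi$-recursion over $j=0,\dots,N-1$, the drift term is dominated by half of the $-\rho_1\tau_1\|\nabla g\|^2-\rho_2\tau_2 b$ terms---this is precisely where $N=\lfloor\alpha\beta^{-2/3}\kappa^9(2+4\beta^{1/2}\kappa^{-3})^{-1}\rfloor$ together with $\tau_1=\Theta(\kappa^{-8}/l)$, $\tau_2=\Theta(\kappa^{-6}/l)$ is balanced---leaving
\[
 \tfrac{\rho_1\tau_1}{2}\sum_{j=0}^{N-1}\mathbb{E}\|\nabla g(x_{0,j})\|^2\ +\ \tfrac{\rho_2\tau_2}{2}\sum_{j=0}^{N-1}b_{0,j}\ \le\ \Phi_{0,0}.
\]
Since the next epoch's snapshot $(\tilde x_0,\tilde y_0)$ is drawn uniformly from $\{(x_{0,j},y_{0,j})\}_{j=0}^{N-1}$, we have $P_{k+1}=\tfrac1N\sum_{j=0}^{N-1}\big(a_{0,j}+\tfrac1{20}b_{0,j}\big)$; applying $\mathbb{E}\|\nabla g(x_{0,j})\|^2\ge 2\mu_1 a_{0,j}$ to the first sum and keeping the second gives $P_{k+1}\le \tfrac{C'}{N\min\{\mu_1\tau_1,\mu_2\tau_2\}}\,P_k$, and the stated $\tau_1,\tau_2,N$ render this $\le\tfrac12$, i.e. $P_{k+1}\le\tfrac12 P_k$. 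For the complexity, each epoch costs $n$ component-gradient evaluations for the two snapshot full gradients plus $2N$ in the inner loop; since $\beta^{1/2}\kappa^{-3}$ is bounded, $N=\Theta(\kappa^9)$, so an epoch costs $\mathcal{O}(n+\kappa^9)$, and as $P_k\le 2^{-k}P_0$, after $\mathcal{O}(\log(1/\epsilon))$ epochs $P_k\le\epsilon$ (which, since $a^k\le P_k$ and $b^k\le 20P_k$, is an $\epsilon$-optimal solution), giving the claimed $\mathcal{O}\big((n+\kappa^9)\log(1/\epsilon)\big)$ total complexity.

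The step I expect to be the main obstacle is closing the coupled, non-monotone inner recursion: neither $a_{0,j}$ nor $b_{0,j}$ decreases on its own and the SVRG directions are not descent directions for the full objective, so one must simultaneously choose the combination weight $\tfrac1{20}$ and the stepsize ratio $\tau_1/\tau_2=\Theta(\kappa^{-2})$ so that the $x$-step's pollution of $b$ is a strict fraction of the $\mu_2\tau_2$-contraction, and control $\sum_j V_j$ self-referentially in terms of the very gradient sums one is trying to bound. Calibrating $N=\Theta(\kappa^9)$ against $\tau_1=\Theta(\kappa^{-8}/l)$ and $\tau_2=\Theta(\kappa^{-6}/l)$ so that the accumulated-drift term is absorbed and the resulting contraction factor is exactly $\tfrac12$ is the delicate, constant-heavy part of the argument.
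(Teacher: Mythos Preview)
Your high-level plan---get a one-step recursion for $a_{0,j}+\tfrac1{20}b_{0,j}$ with a drift term, bound the accumulated drift, telescope, and use the uniform sampling of the next snapshot---matches the paper's architecture. The gap is in the drift control, and it is fatal for the stated parameters.

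You claim that the crude triangle-inequality bound $\|x_{0,j}-\tilde x_0\|\le\tau_1\sum_{\ell<j}\|\text{estimator}_\ell\|$ (plus the analogue for $y$) yields $\sum_j V_j\le C\,N^2(\tau_1^2+\tau_2^2)\sum_j(\cdots)$, and that this self-referential inequality closes because ``$N^2 l^2(\tau_1^2+\tau_2^2)$ is below an absolute constant, which the stated $\tau_1,\tau_2,N$ guarantee.'' That assertion is false: with $N=\Theta(\kappa^9)$ and $\tau_2=\beta/(l\kappa^6)$ one has $N^2l^2\tau_2^2=\Theta(\kappa^{18})\cdot\Theta(\kappa^{-12})=\Theta(\kappa^6)$, which blows up. So your drift bound cannot be closed, and the subsequent absorption of the drift into the gradient sums fails.

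The paper avoids this by \emph{not} using the triangle-inequality bound. Instead it builds a Lyapunov function $R_j=a_j+\lambda b_j+c_j\|x_j-\tilde x\|^2+d_j\|y_j-\tilde y\|^2$ with \emph{backward-defined} sequences $c_N=d_N=0$ and $c_j,d_j$ determined recursively (in the spirit of Reddi et al.'s nonconvex SVRG analysis). The one-step drift recursion is obtained via Young's inequality with free parameters $\beta_1,\beta_2$, giving $\|x_{j+1}-\tilde x\|^2\le(1+\tau_1\beta_1)\|x_j-\tilde x\|^2+(\tau_1^2+\tau_1/\beta_1)\|\nabla_x f\|^2+\tau_1^2(\text{variance})$ and similarly for $y$. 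The point is that $\beta_1,\beta_2$ are tuned so that the per-step growth rate of $c_j,d_j$ is $\Theta(\kappa^{-9})$ (not $\Theta(l\tau_2)=\Theta(\kappa^{-6})$ as your approach implicitly uses), hence $(1+\Theta(\kappa^{-9}))^N$ stays bounded over $N=\Theta(\kappa^9)$ inner steps. This buys an exponential-versus-polynomial improvement over your $N^2$ estimate, at the price of the $\tau_i/\beta_i$ terms that leak into the source; balancing those against the descent in $a$ and the contraction in $b$ is exactly the ``constant-heavy'' calibration you anticipated, but it cannot be done with your cruder bound.
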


\begin{theorem} \label{svrg thm 1}
Under the same assumptions in Theorem \ref{svrg thm 2} and further assuming $n\leq \kappa^9$ , if we run VR-AGDA with $\tau_1 =  \beta/(28\kappa^2ln^{2/3})$, $\tau_2 = \beta/(ln^{2/3})$, $N =\lfloor \alpha\beta^{-2/3}n(2+4\beta^{1/2}n^{-1/3})^{-1}\rfloor$, and $T = \lceil\kappa^3n^{-1/3}\rceil$, where $\alpha, \beta$ are constants irrelevant to $l,n, \mu_1, \mu_2$,  then
% \begin{equation*}
    $P_{k+1} \leq \frac{1}{2}P_k.$
% \end{equation*}
This further implies a total  complexity of $$\mathcal{O}\big(n^{2/3}\kappa^3\log(1/\epsilon)\big)$$
for VR-AGDA to achieve an $\epsilon$-optimal solution.
\end{theorem}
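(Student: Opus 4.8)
The plan is to obtain Theorem~\ref{svrg thm 1} from the same per-epoch machinery that underlies Theorem~\ref{svrg thm 2}: establish a master recursion for $P_{k+1}$ in terms of $P_k$ valid for a range of $(\tau_1,\tau_2,N,T)$, then check that the stated choice lies in that range. The hypothesis $n\le\kappa^9$ enters only here — it is what makes $T=\lceil\kappa^3 n^{-1/3}\rceil$ a genuine integer $\ge 1$ and keeps $\tau_1,\tau_2$ below the smoothness thresholds while $N=\Theta(n)$. The core object is one inner SVRG pass, the $j$-loop of length $N$ started from the snapshot $(\tilde x_t,\tilde y_t)$. First I would record the standard variance bound for the variance-reduced directions used in the inner updates of Algorithm~\ref{svrg}: each $f_i$ is $l$-smooth (Assumption~\ref{Lipscthitz gradient 2}) and the estimators are unbiased, so their second moments are bounded by $\mathcal{O}(l^2)\big(\|x_{t,j}-\tilde x_t\|^2+\|y_{t,j}-\tilde y_t\|^2\big)$ plus $\|\nabla_x f(\tilde x_t,\tilde y_t)\|^2$ and $\|\nabla_y f(\tilde x_t,\tilde y_t)\|^2$.

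Next I would run the AGDA-type Lyapunov descent on $P_t=a_t+\lambda b_t$, $a_t=\mathbb{E}[g(x_t)-g^*]$, $b_t=\mathbb{E}[g(x_t)-f(x_t,y_t)]$, exactly as in the proofs of Theorems~\ref{main}--\ref{main deterministic} but with the SVRG variance above in the role previously played by $\sigma^2$: use $L$-smoothness of $g$ ($L=l+l^2/\mu_2$) and the PL inequality for $g$ to decrease $a_t$, and the PL inequality in $y$ to decrease the $\max_y f - f$ part of $b_t$, keeping $\tau_1/\tau_2=\Theta(\kappa^{-2})$ so that $\nabla_x f(x_{t,j},y_{t,j})$ tracks $\nabla g(x_{t,j})$ as in the AGDA analysis. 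Summing the one-step inequalities over $j=0,\dots,N-1$ and $t=0,\dots,T-1$ gives $\sum_{t,j}\mathbb{E}\big[\text{PL gaps at }(x_{t,j},y_{t,j})\big]\;\lesssim\;P_k+(\text{accumulated drift})$; since $(x^k,y^k)$ is drawn uniformly from precisely these $TN$ iterates, the left side is $\Theta(TN)\,P_{k+1}$, which is the GD-SVRG-for-PL mechanism of~\citet{reddi2016stochastic} adapted to the minimax potential.

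The main obstacle is controlling the \emph{accumulated drift} $\sum_{t,j}\mathbb{E}\big[\|x_{t,j}-\tilde x_t\|^2+\|y_{t,j}-\tilde y_t\|^2\big]$ injected by the variance bound: in the alternating nonconvex-nonconcave regime the $x$- and $y$-drifts enter each other's variance estimates, so the naive bound blows up. I would handle it with a short coupled recursion — $\|x_{t,j+1}-\tilde x_t\|^2\le(1+\eta)\|x_{t,j}-\tilde x_t\|^2+\tau_1^2(1+1/\eta)\cdot(\text{second moment})$ and similarly for $y$ — and then close the resulting system using that $\tau_1=\Theta(\kappa^{-2}(ln^{2/3})^{-1})$, $\tau_2=\Theta((ln^{2/3})^{-1})$ and $N=\Theta(n)$, so that quantities like $N\tau_2^2 l^2$ and $N^2\tau_1\tau_2 l^2$ are small enough for the drift to be absorbed (after telescoping) into $P_k$ with a strictly positive margin. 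This is exactly what forces the powers $\tau_1\propto\kappa^{-2}(ln^{2/3})^{-1}$, $\tau_2\propto(ln^{2/3})^{-1}$, $N\propto n$, $T\propto\kappa^3 n^{-1/3}$; in the $n\ge\kappa^9$ regime the same system instead closes with the flatter choice of Theorem~\ref{svrg thm 2}.

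Finally I would pick the free constants $\alpha,\beta$ so the overall contraction factor is at most $1/2$, giving $P_{k+1}\le\frac12 P_k$, hence an $\epsilon$-optimal solution after $\mathcal{O}(\log(1/\epsilon))$ epochs. Each epoch runs $T$ outer iterations, each costing one full gradient ($\mathcal{O}(n)$) plus $N$ cheap inner updates ($\mathcal{O}(1)$ each), so $\mathcal{O}(T(n+N))$ per epoch; with $T=\lceil\kappa^3 n^{-1/3}\rceil$ and $N=\Theta(n)$ this is $\mathcal{O}(\kappa^3 n^{2/3})$, for a total of $\mathcal{O}(n^{2/3}\kappa^3\log(1/\epsilon))$, the claimed bound.
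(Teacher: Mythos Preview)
Your proposal is correct and matches the paper's approach: the proof of Theorem~\ref{svrg thm 1} reuses verbatim the per-epoch contraction machinery established in Parts~1--2 of the proof of Theorem~\ref{svrg thm 2} (SVRG variance bounded by snapshot drift, AGDA-style descent on $a_j+\lambda b_j$, and your ``coupled drift recursion'' packaged as an augmented potential $R_j=a_j+\lambda b_j+c_j\|x_j-\tilde x\|^2+d_j\|y_j-\tilde y\|^2$ with backward-defined $c_j,d_j$ so that $R_{j+1}\le R_j-m_j^1 a_j-\lambda m_j^2 b_j$ telescopes cleanly), and then simply substitutes $k_3=\beta n^{-2/3}$, $N=\Theta(n)$, $T=\lceil\kappa^3 n^{-1/3}\rceil$ into the resulting bound to verify $NT\gamma\ge 2$. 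Your complexity accounting is also right.
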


\begin{figure}[!ht]
%\vskip 0.2in
\begin{center}
\centerline{\includegraphics[width=0.4\columnwidth]{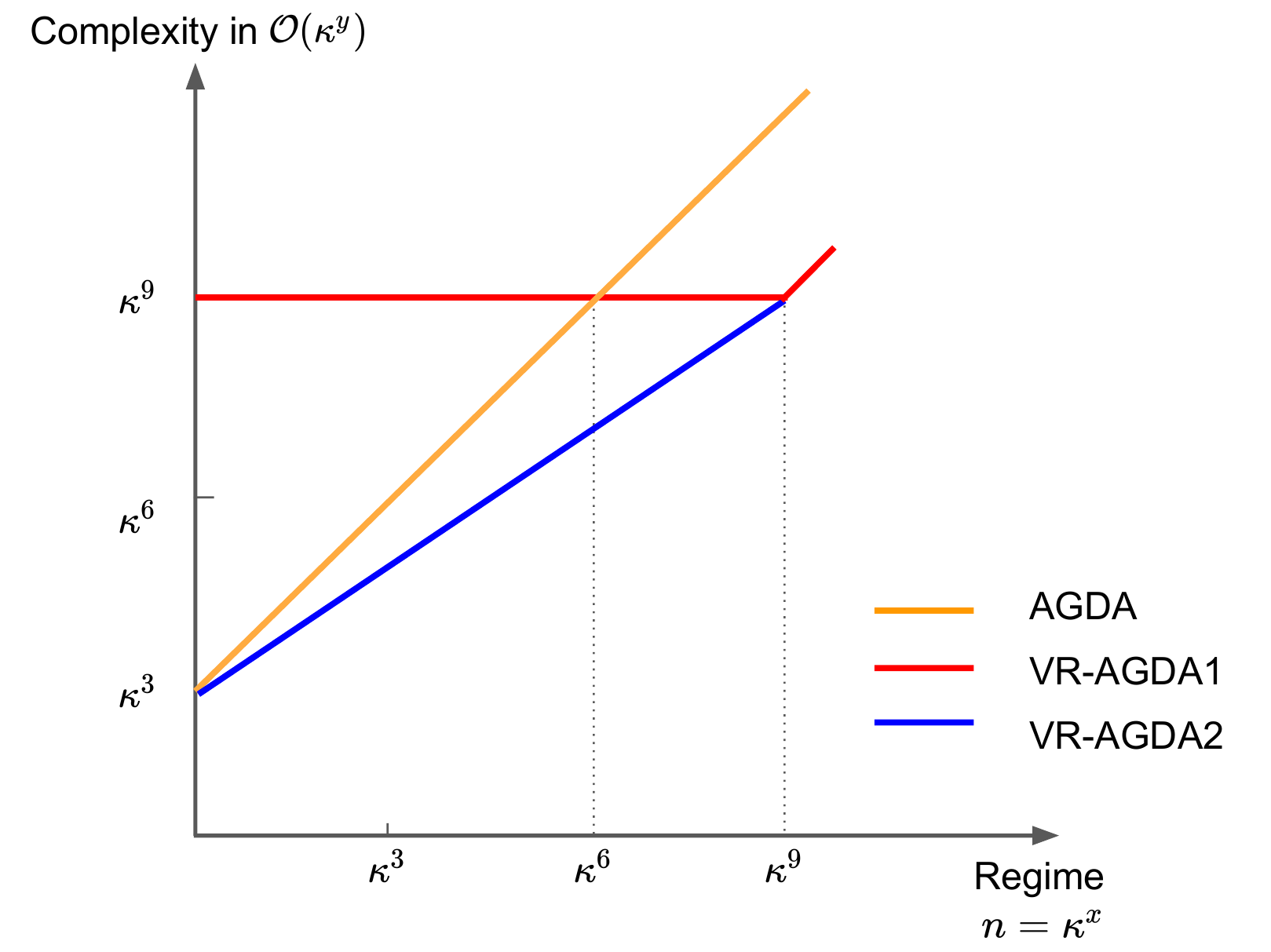}}
\caption{Comparison of complexities of AGDA and VR-AGDA, where VR-AGDA1, VR-AGDA2 correspond to the two settings in Theorems \ref{svrg thm 2} and \ref{svrg thm 1}. In the regime $n\leq \kappa^9$, VR-AGDA2 performs best; in the regime $n\geq \kappa^9$, VR-AGDA1 performs best. }
\label{complexity figure}
\end{center}
\vskip -0.2in
\end{figure}

\begin{remark}
 Theorems \ref{svrg thm 2} and \ref{svrg thm 1} are different in their choices of stepsizes and iteration numbers, which gives rise to different complexities. Another difference is that Theorem \ref{svrg thm 1} only works in the regime where the number of components $n$ is not ``too large" compared to the condition number, i.e., $n\leq \kappa^9$, which naturally guarantees $T = \lceil\kappa^3n^{-1/3}\rceil \geq 1$. 
\end{remark}

\begin{remark}
Since AGDA has complexity $\mathcal{O}\big( n\kappa^3\log(1/\epsilon) \big)$, VR-AGDA with the setting in Theorem \ref{svrg thm 2} is better than AGDA when $n \geq\mathcal\kappa^6$. With the setting in Theorem \ref{svrg thm 1}, VR-AGDA outperforms AGDA as long as the  assumption $n\leq \kappa^9$ holds. As a result of these two theorems, VR-AGDA always improves over AGDA. Furthermore, VR-AGDA with the second setting has a lower complexity than the first setting in the regime $n\leq \kappa^9$, although the first setting allows a simpler double-loop algorithm. Figure \ref{complexity figure} summarizes the performance of VR-AGDA compared to AGDA in different regimes of $n$ and $\kappa$. 
\end{remark}
\vspace{-0.45cm}

% \citet{reddi2016stochastic} showed that GD-SVRG has complexity $\mathcal{O}\big( n^{2/3}\kappa\log(1/\epsilon) \big)$ in the regime $n\leq \kappa^3$ for minimization of PL function, which improves by $n^{1/3}$ over gradient descent with complexity $\mathcal{O}\big(n\kappa\log(1/\epsilon)\big)$.
% The above results indicate that VR-AGDA only  shred off $n^{1/3}$ in the regime $n\leq \kappa^9$ (Theorem \ref{svrg thm 1}), it also further improves out side this regime (Theorem \ref{svrg thm 2}). 
% VR-AGDA can be consider as a combination of AGDA and Stoc-AGDA, since it uses stochastic gradients and it reduces to AGDA when $N=T=1$.
% Proofs of these  two theorems are provided in Appendix.  The proof is challenging, because AGDA does not converge for general nonconvex-nonconcave function and we thus need to explore the special structure of the two-sided PL condition, which also relies on the proof techniques in Section \ref{Sec2}.
%% NH: the above argument is not very insightful, better to explain the technical difficulty/novelty, or not mention it. 

\section{Experiments}
\label{Sec5}

\noindent In the introduction, we already presented the convergence results of AGDA on a two-dimensional nonconvex-nonconcave function that satisfies the two-sided PL condition. In this section, we will present numerical experiments on machine learning applications: robust least square and imitation learning for linear quadratic regulators (LQR). Particularly, we focus on the comparison between AGDA, Stoc-AGDA, and VR-AGDA.

\subsection{Robust least square}

\begin{figure*} [!t]
\centering
\subfigure[Dataset 1]{%
\label{fig:first}%
\includegraphics[height=1.63in]{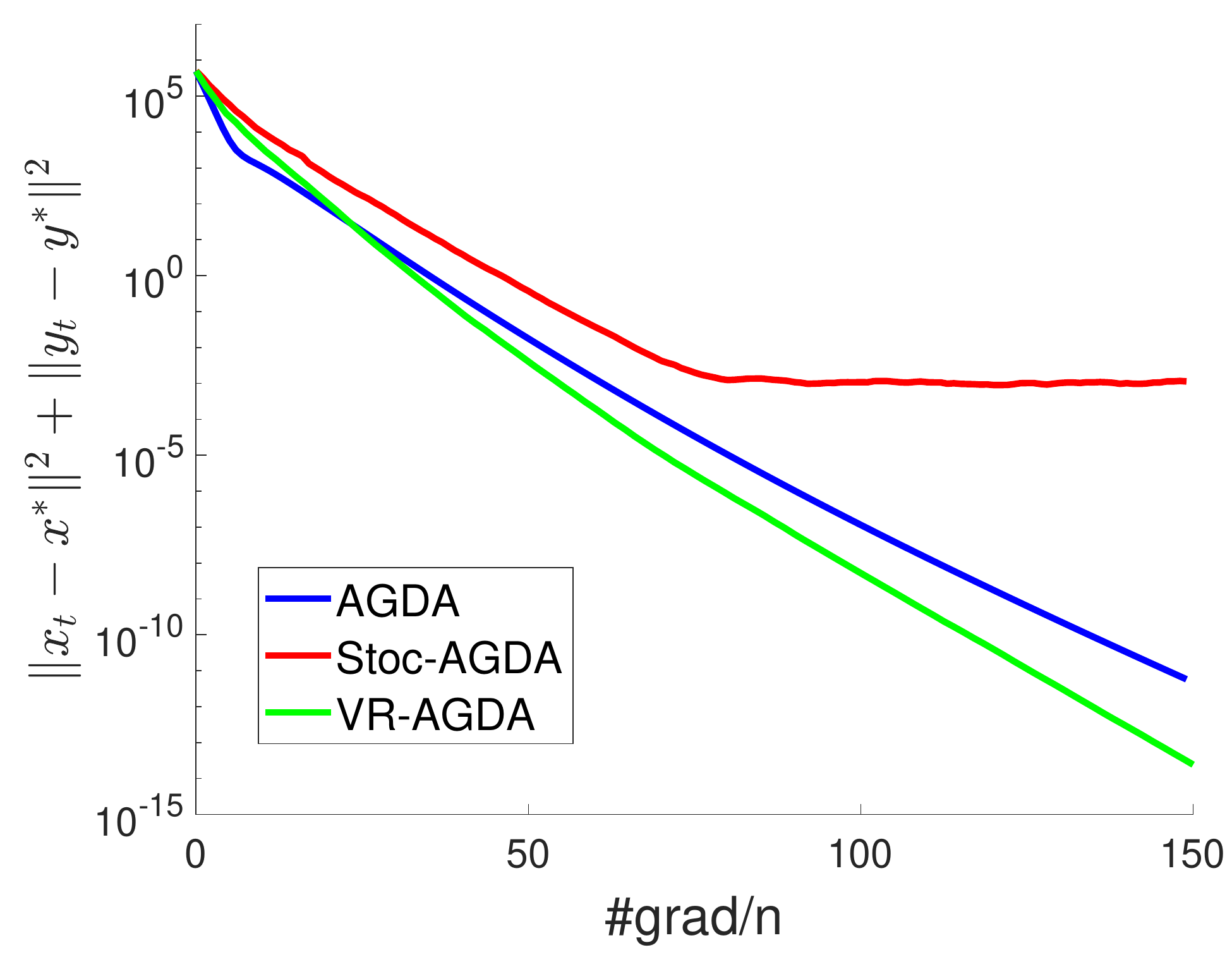}}%
\subfigure[Dataset 2]{%
\label{fig:second}%
\includegraphics[height=1.63in]{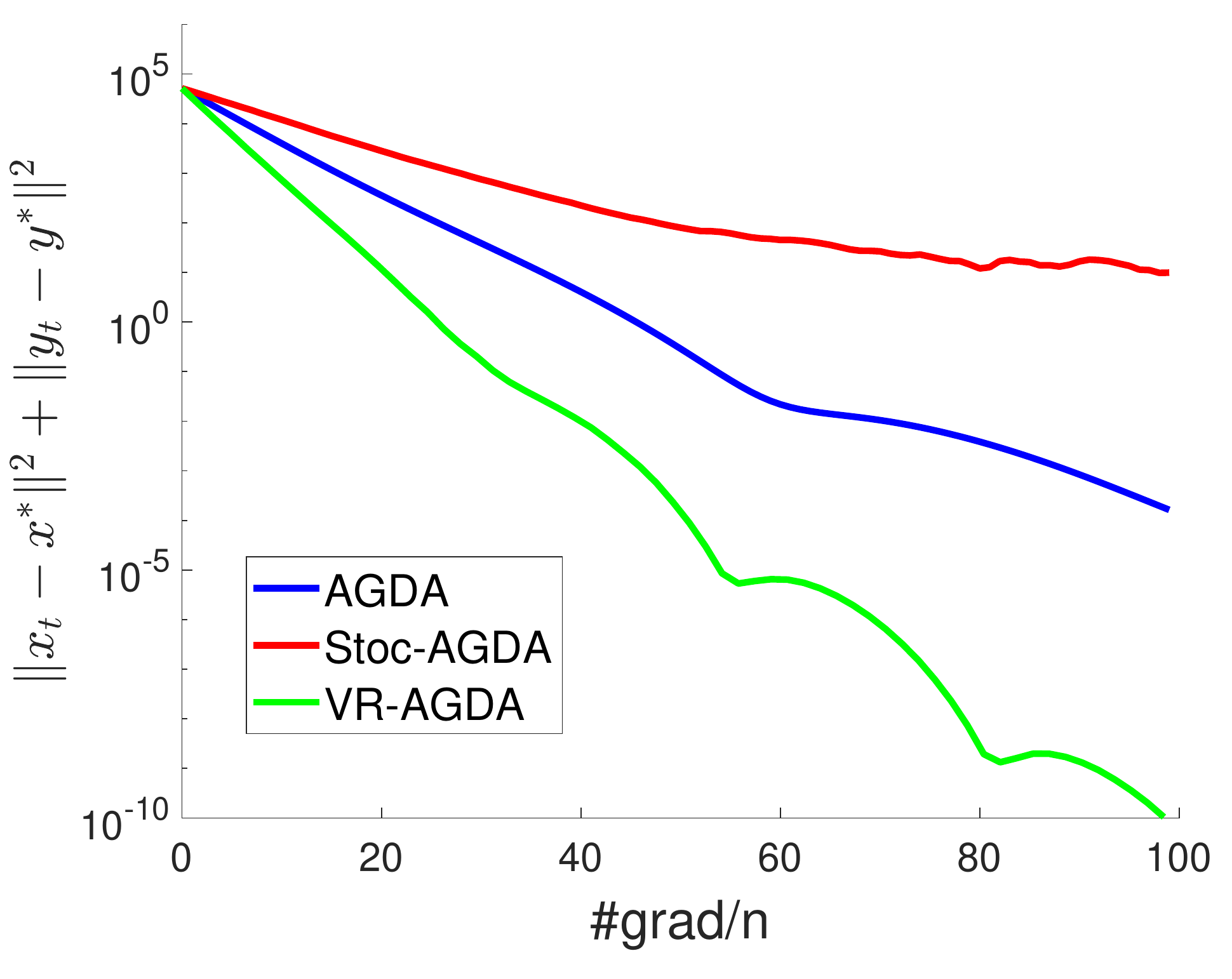}}%
\subfigure[Dataset 3]{%
\label{fig:second}%
\includegraphics[height=1.63in]{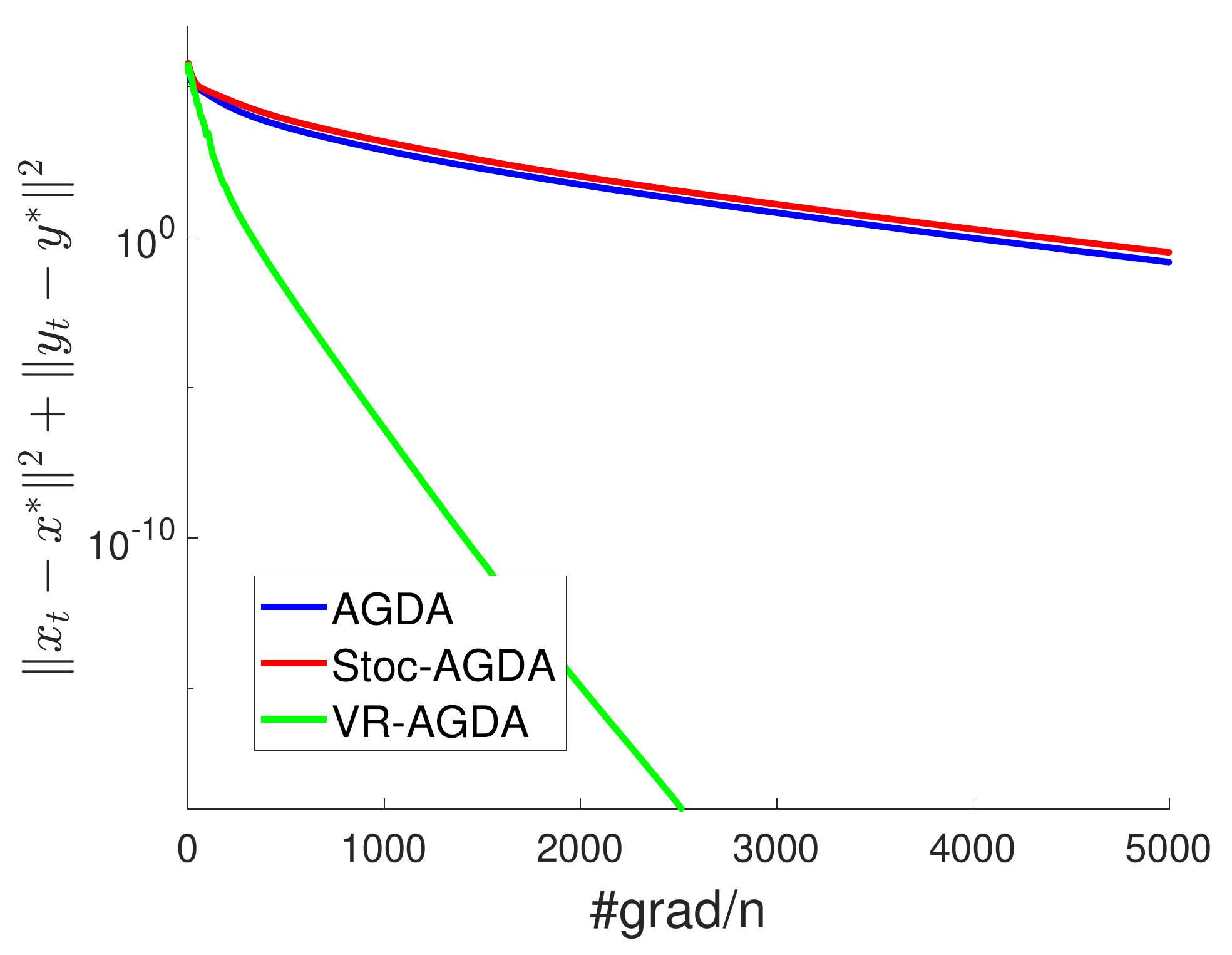}}%
\vspace{-0.4cm}

\subfigure[Dataset 1]{%
\label{fig:third}%
\includegraphics[height=1.63in]{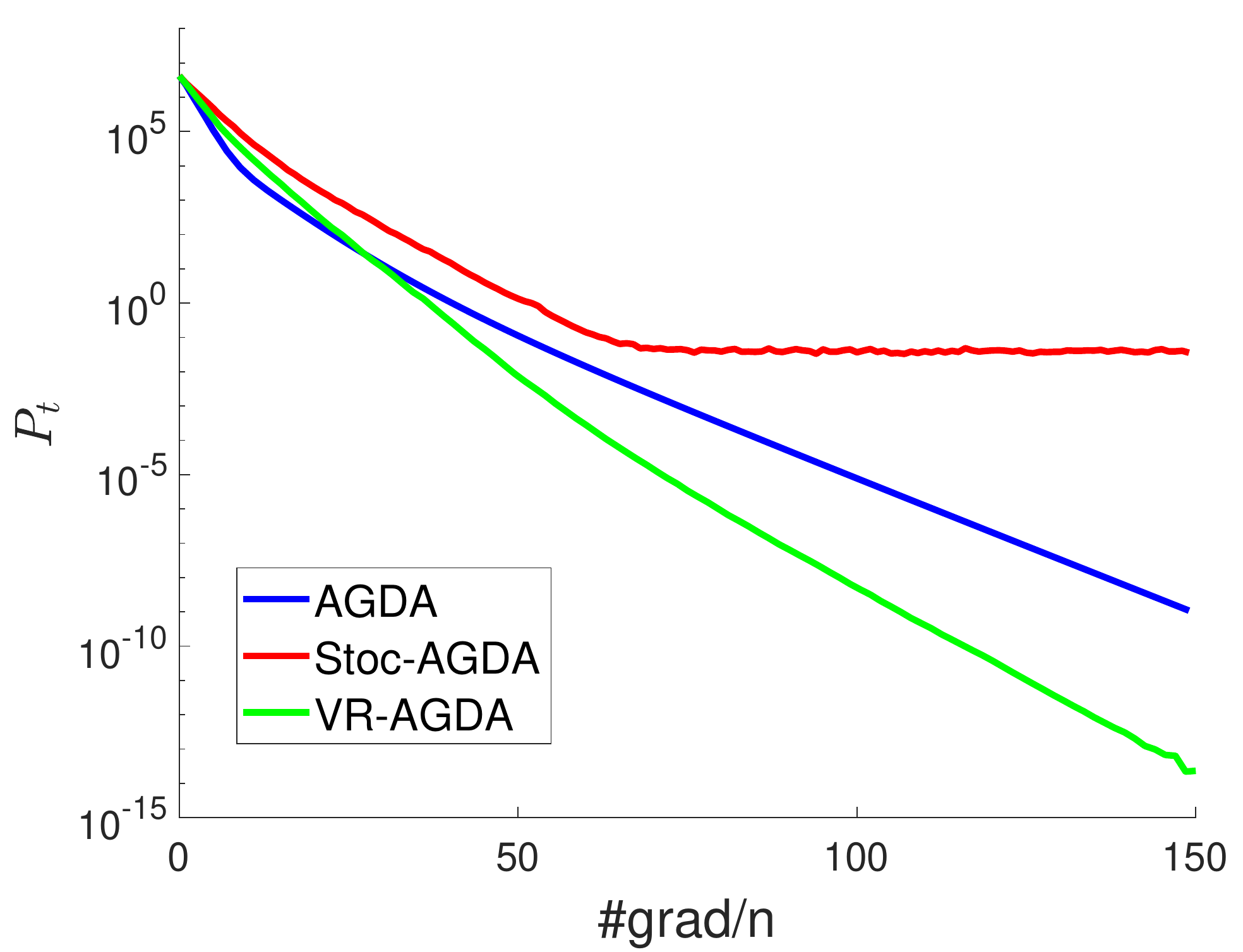}}%
\subfigure[Dataset 2]{%
\label{fig:third}%
\includegraphics[height=1.63in]{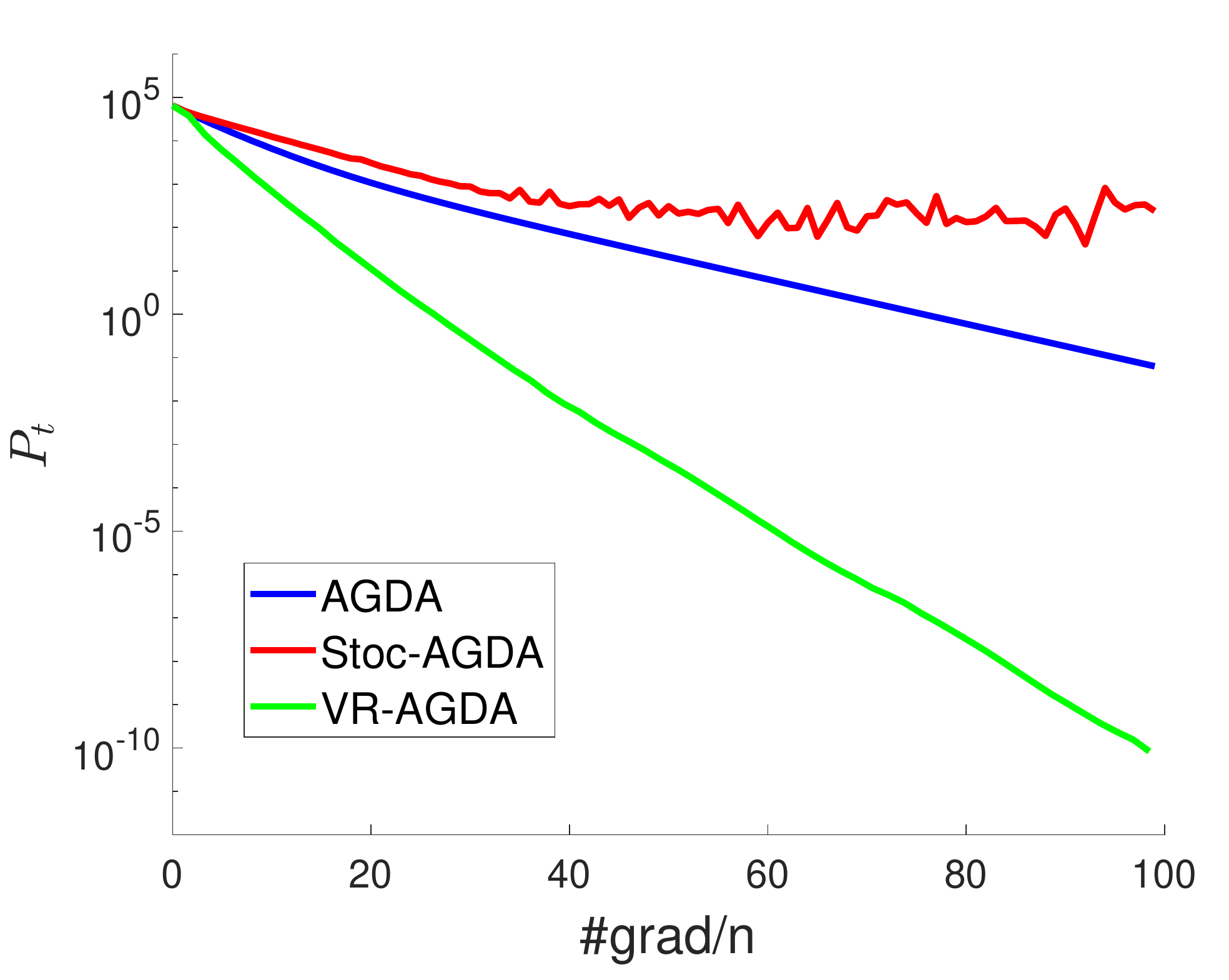}}%
\subfigure[Dataset 3]{%
\label{fig:fourth}%
\includegraphics[height=1.63in]{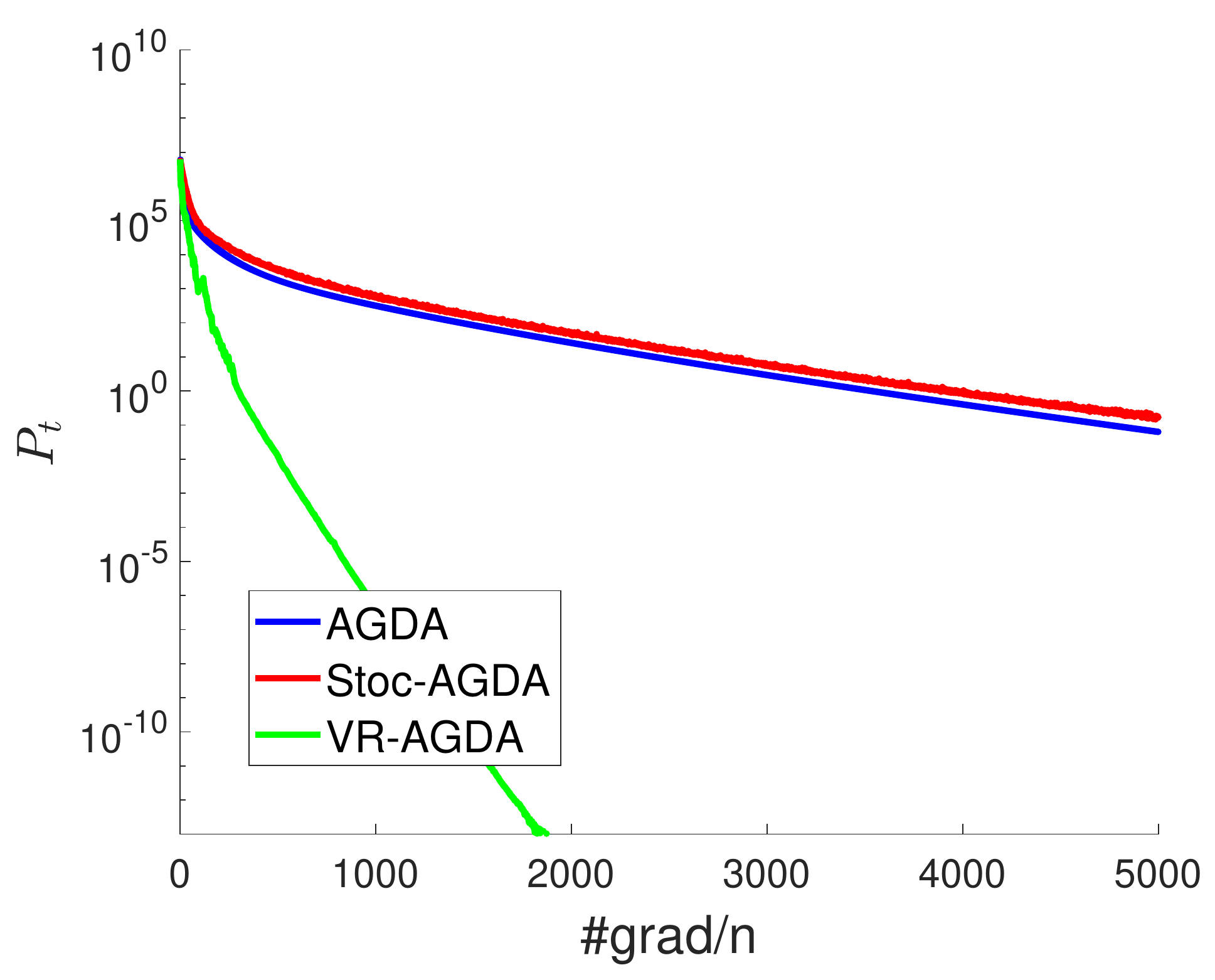}}
\vspace{-0.4cm}
\caption{Comparison of the convergences of AGDA, Stoc-AGDA and SVRG-AGDA on three datasets based on two inaccuracy measures: (i) $\Vert x_t - x^*\Vert^2 + \Vert y_t-y^*\Vert^2$ ( as shown in the first row), and (ii) $P_t = (g(x_t) - g^*) + (g(x_t) - f(x_t, y_t))$ (as shown in the second row). } \label{figure 2}
\end{figure*}

\noindent We consider the least square problems with coefficient matrix $A\in \mathbb{R}^{n\times m}$ and noisy vector $y_0\in \mathbb{R}^n$. We assume that $y_0$ is subject to bounded deterministic perturbation $\delta$. Robust least square (RLS) minimizes the worst case residual, and can be formulated as \citep{el1997robust}:
\begin{equation*}
    \min_x \max_{\delta:\Vert \delta \Vert \leq \rho} \Vert Ax - y\Vert^2, \text{ where } \delta = y_0 -y.
\end{equation*}
We consider RLS with soft constraint:
\begin{equation} \label{RLS}
    \min_x \max_y F(x, y):=\Vert Ax - y\Vert_M^2  - \lambda \Vert y- y_0\Vert_M^2,
\end{equation}
where we also adopt the general M-(semi-)norm in (\ref{RLS}): $\Vert x \Vert_M^2 = x^TMx$ and $M$ is positive semi-definite. $F(x, y)$ satisfies the two-sided PL condition when $\lambda>1$, because it can be written as the composition of a strongly-convex-strongly-concave function and an affine function (Example \ref{ex 2}). However, $F(x,y)$ is not strongly convex about $x$, and when $M$ is not full-rank, it is not strongly concave about $y$. 

\textbf{Datasets.} We use three datasets in the experiments, and two of them are generated in the same way as in \citet{du2019linear}. We generate the first dataset with $n=1000$ and $m=500$ by sampling rows of $A$ from a Gaussian  $\mathcal{N}(0, I_n)$ distribution and setting $y_0 = Ax^*+\epsilon$ with $x^*$ from Gaussian $\mathcal{N}(0, 1)$  and $\epsilon$ from Gaussian $\mathcal{N}(0, 0.01)$. We set $M = I_n$ and $\lambda = 3$. The second dataset is the rescaled aquatic toxicity dataset by \citet{cassotti2014prediction}, which uses 8 molecular descriptors of 546 chemicals  to predict quantitative acute aquatic toxicity towards Daphnia Magna. We use $M = I$ and $\lambda = 2$ for this dataset. The third dataset is generated with $A\in \mathbb{R}^{1000\times 500}$ from Gaussian $\mathcal{N}(0, \Sigma)$ where $\Sigma_{i,j} = 2^{-|i-j|/10}$, $M$ being rank-deficit with positive eigenvalues sampled from $[0.2, 1.8]$ and $\lambda = 1.5$. These three datasets represent cases with low, median, and high condition numbers, respectively.

\textbf{Evaluation.} For each dataset, we compare three algorithms: AGDA, Stoc-AGDA, and VR-AGDA. We tune the stepsizes of all algorithms to achieve the best convergence. For Stoc-AGDA, we choose constant stepsizes to form a fair comparison with the other two. We report the potential function value, i.e., $P_t$ described in our theorems, and distance to the limit point $\Vert (x_t, y_t) - (x^*, y^*)\Vert^2$. These errors are plotted against the number of gradient evaluations normalized by $n$ (i.e., number of full gradients). Results are reported in Figure \ref{figure 2}. We observe that VR-AGDA and AGDA both exhibit linear convergence, and the speedup of VR-AGDA is fairly significant when the condition number is large, whereas Stoc-AGDA progresses fast at the beginning and stagnates later on. These numerical results clearly validate our theoretical findings.

\subsection{Generative adversarial imitation learning for LQR} \label{lqr section}

\begin{figure*} [!ht]  
\centering
\subfigure[$d=3, k=2$]{%
\label{fig:first}%
\includegraphics[height=0.27\linewidth]{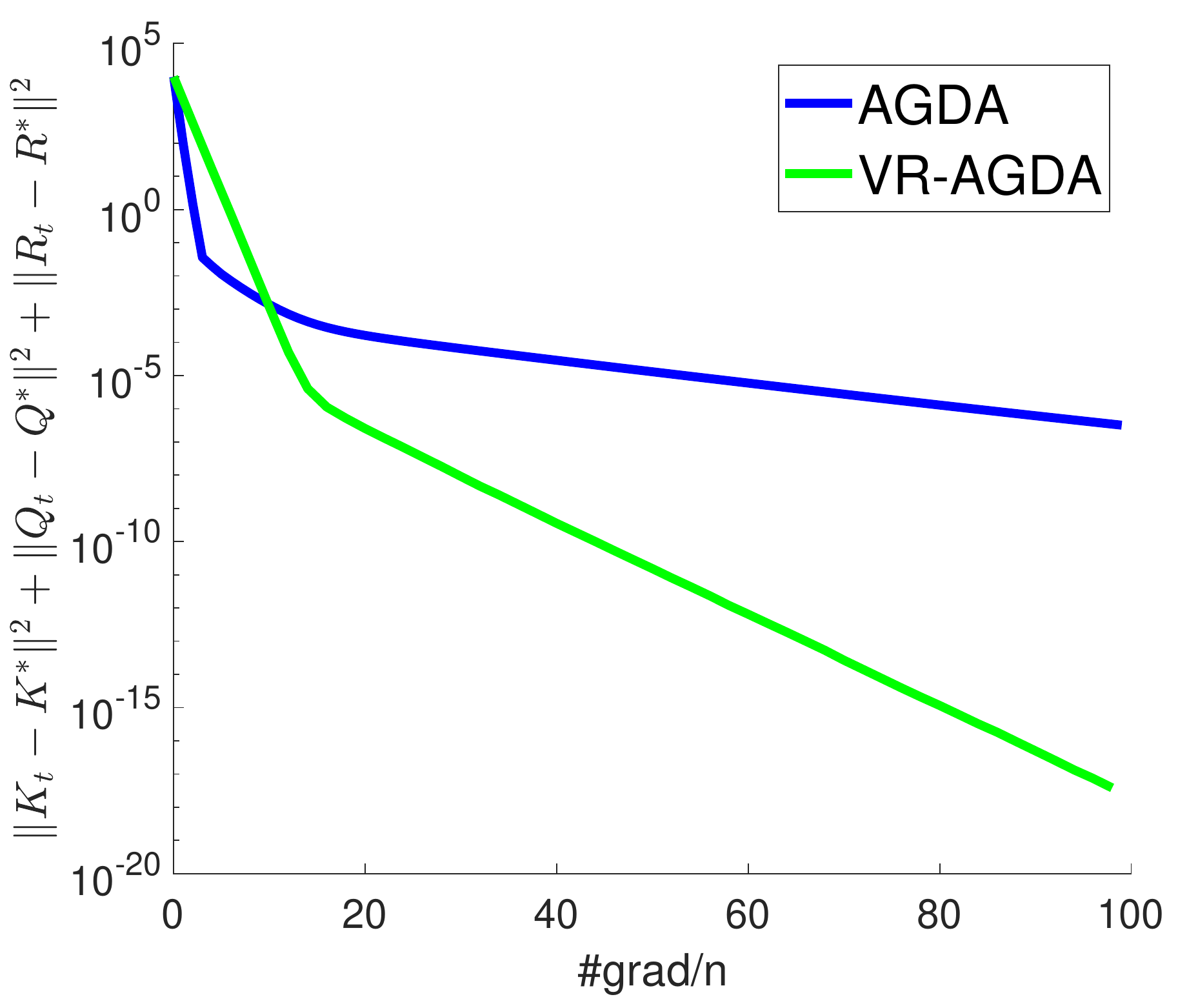}}%
\subfigure[$d=20, k=10$]{%
\label{fig:third}%
\includegraphics[height=0.27\linewidth]{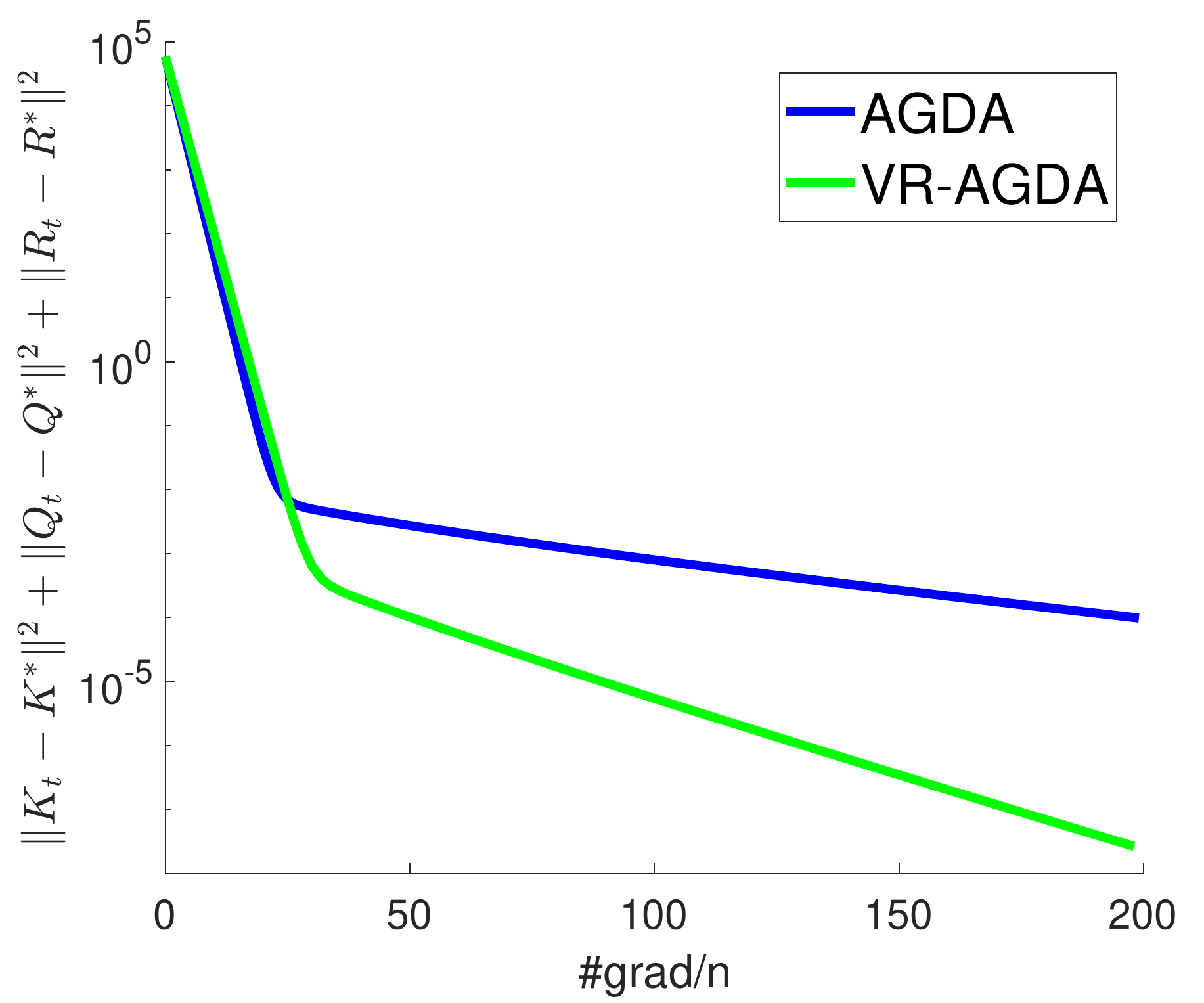}}%
\subfigure[$d=30, k=20$]{%
\label{fig:fourth}%
\includegraphics[height=0.27\linewidth]{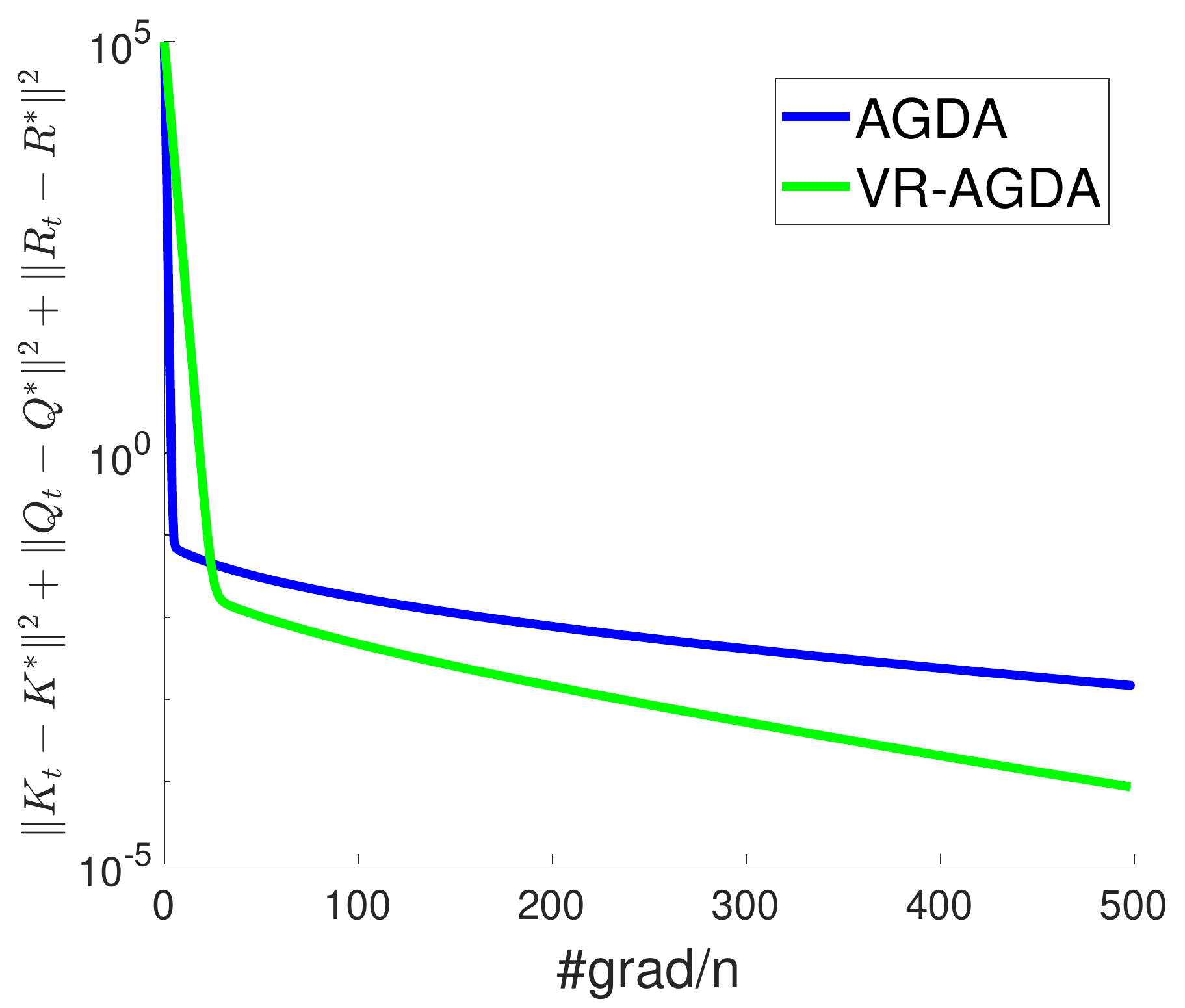}}
\vspace{-0.4cm}
\caption{AGDA and VR-AGDA on generative adversarial learning for LQR} \label{figure lqr}
\end{figure*}

\noindent The optimal control problem for LQR can be formulated as:
\begin{align*}
    \minimize_{\pi_t} \quad &\mathbb{E}_{x_0\sim \mathcal{D}} \sum_{t=0}^{\infty} x_{t}^{\top} Q x_{t}+u_{t}^{\top} R u_{t} \\
    \text{such that}\quad  &x_{t+1} = Ax_t + Bu_t,  u_t = \pi_t(x_t)
\end{align*}
where $x_t \in \mathbb{R}^d$ is a state,  $u_t \in \mathbb{R}^k$ is a control, $\mathcal{D}$ is the distribution of initial state $x_0$, and $\pi_t$ is a policy. It is known that the optimal policy is linear: $u_t = -K^*x_t$, where $K^*\in \mathbb{R}^{k\times d}$. If we parametrize the policy in the linear form, $u_t = -Kx_t$, the problem can be written as:
\begin{equation*}
    \min_K C(K; Q,R):=\mathbb{E}_{x_{0} \sim \mathcal{D}}\left[\sum_{t=0}^{\infty}\left(x_{t}^{\top} Q x_{t}+u_{t}^{\top} R u_{t}\right)\right]
\end{equation*}
where the trajectory is induced by LQR dynamics and policy $K$. In generative adversarial imitation learning for LQR, the trajectories induced by an expert policy $K_E$ are observed and part of the goal is to learn the cost function parameters $Q$ and $R$ from the expert. This can be formulated as a minimax problem \citep{cai2019global}:
\begin{equation*}
    \min_K \max_{(Q, R)\in \Theta} m(K,Q,R)
\end{equation*}
where $m(K,Q,R):= C(K;Q,R)-C(K_E;Q,R)-\Phi(Q,R)$, $\Theta = \{(Q, R): \alpha_{Q} I \preceq Q \preceq \beta_{Q} I, \quad \alpha_{R} I \preceq R \preceq \beta_{R} I \}$ and $\Phi$ is a strongly-convex regularizer. We sample $n$ initial points $x_0^{(1)}, x_0^{(2)},..., x_0^{(n)}$ from $\mathcal{D}$ and approximate $C(K; Q,R)$ by sample average 
\begin{equation*}
    C_n(K;Q,R) := \frac{1}{n}\sum_{i=1}^n\left[\sum_{t=0}^{\infty}\left(x_{t}^{\top} Q x_{t}+u_{t}^{\top} R u_{t}\right)\right]_{x_0 = x_0^{(i)}}.
\end{equation*}
We then consider 
\begin{equation*}
    m_n(K,Q,R) =  C_n(K;Q,R)-C_n(K_E;Q,R)-\Phi(Q,R).
\end{equation*}
Note that $m_n$ satisfies the PL condition in terms of $K$ \citep{fazel2018global}, and $m_n$ is strongly-concave in terms of $(Q,R)$, so the function satisfies the two-sided PL condition. 

In our experiment, we use $\Phi(Q,R) = \lambda (\Vert Q-\Bar{Q}\Vert^2 + \Vert R-\Bar{R}\Vert^2)$ for some $\Bar{Q}, \Bar{R}$ and $\lambda = 1$. We generate three datasets with different dimensions: (1) $d=3, k=2$; (2) $d=20, k=10$; (3) $d = 30, k=20$. The initial distribution $\mathcal{D}$ is $\mathcal{N}(0, I_d)$ and we sample $n=100$ initial points. 
% $A$ is a matrix with all eigenvalues less than.{\color{red}NH: something missing here!} 
The exact gradients can be computed based on the compact forms established in \citet{fazel2018global, cai2019global}. We compare AGDA and VR-AGDA under fine-tuned stepsizes, and track their errors in terms of $\Vert K_t -K^*\Vert^2 + \Vert Q_t - Q^*\Vert_F^2+\Vert R_t - R^*\Vert_F^2 $.  The result is reported in Figure \ref{figure lqr}, which again indicates that VR-AGDA significantly outperforms AGDA.  
% The goal of this experiment is to illustrate that our analysis might help to understand the dynamics in the training of imitation learning. 

\section{Conclusion}
\noindent In this paper, we identify a subclass of nonconvex-nonconcave minimax problems, represented by the the so-called two-side PL condition, for which AGDA and Stoc-AGDA can converge to \emph{global} saddle points. 
% With the proper stepsizes, the potential function we design will contract after each iteration in the deterministic setting. 
We also propose the first linearly-convergent variance-reduced AGDA algorithm that is provably always faster than AGDA, for this subclass of minimax problems
. % When the function is in the finite-sum structure, we adopt variance reduced algorithm VR-AGDA and show that it converges faster than AGDA. 
We hope this work can shed some light on the understanding of nonconvex-nonconcave minimax optimization: (1) different learning rates for two players are essential in GDA algorithms with alternating updates; (2) 
% GDA is not a counterpart of gradient descent in minimax optimization, 
convexity-concavity is not a watershed to guarantee global convergence of GDA algorithms; (3) the complexity of solving minimax problems under PL conditions may have high-order dependence on the condition number in contrast to problems with strong convex-concavity conditions. It remains interesting to explore whether similar results apply to GDA algorithms with simultaneous updates and whether these algorithms can be further accelerated with momentum or catalyst schemes.

\bibliography{example_paper}

\begin{thebibliography}{48}
\providecommand{\natexlab}[1]{#1}
\providecommand{\url}[1]{\texttt{#1}}
\expandafter\ifx\csname urlstyle\endcsname\relax
  \providecommand{\doi}[1]{doi: #1}\else
  \providecommand{\doi}{doi: \begingroup \urlstyle{rm}\Url}\fi

\bibitem[Abernethy et~al.(2019)Abernethy, Lai, and Wibisono]{abernethy2019last}
Jacob Abernethy, Kevin~A Lai, and Andre Wibisono.
\newblock Last-iterate convergence rates for min-max optimization.
\newblock \emph{arXiv preprint arXiv:1906.02027}, 2019.

\bibitem[Bailey et~al.(2019)Bailey, Gidel, and Piliouras]{bailey2019finite}
James~P Bailey, Gauthier Gidel, and Georgios Piliouras.
\newblock Finite regret and cycles with fixed step-size via alternating
  gradient descent-ascent.
\newblock \emph{arXiv preprint arXiv:1907.04392}, 2019.

\bibitem[Cai et~al.(2019)Cai, Hong, Chen, and Wang]{cai2019global}
Qi~Cai, Mingyi Hong, Yongxin Chen, and Zhaoran Wang.
\newblock On the global convergence of imitation learning: A case for linear
  quadratic regulator.
\newblock \emph{arXiv preprint arXiv:1901.03674}, 2019.

\bibitem[Cassotti et~al.(2014)Cassotti, Ballabio, Consonni, Mauri, Tetko, and
  Todeschini]{cassotti2014prediction}
Matteo Cassotti, Davide Ballabio, Viviana Consonni, Andrea Mauri, Igor~V Tetko,
  and Roberto Todeschini.
\newblock Prediction of acute aquatic toxicity toward daphnia magna by using
  the ga-k nn method.
\newblock \emph{Alternatives to Laboratory Animals}, 42\penalty0 (1):\penalty0
  31--41, 2014.

\bibitem[Chen et~al.(2017)Chen, Lucier, Singer, and Syrgkanis]{chen2017robust}
Robert~S Chen, Brendan Lucier, Yaron Singer, and Vasilis Syrgkanis.
\newblock Robust optimization for non-convex objectives.
\newblock In \emph{Advances in Neural Information Processing Systems}, pages
  4705--4714, 2017.

\bibitem[Chen and Wang(2016)]{chen2016stochastic}
Yichen Chen and Mengdi Wang.
\newblock Stochastic primal-dual methods and sample complexity of reinforcement
  learning.
\newblock \emph{arXiv preprint arXiv:1612.02516}, 2016.

\bibitem[Dai et~al.(2017)Dai, He, Pan, Boots, and Song]{dai2017learning}
Bo~Dai, Niao He, Yunpeng Pan, Byron Boots, and Le~Song.
\newblock Learning from conditional distributions via dual embeddings.
\newblock In \emph{Artificial Intelligence and Statistics}, pages 1458--1467,
  2017.

\bibitem[Dai et~al.(2018)Dai, Shaw, Li, Xiao, He, Liu, Chen, and
  Song]{dai2017sbeed}
Bo~Dai, Albert Shaw, Lihong Li, Lin Xiao, Niao He, Zhen Liu, Jianshu Chen, and
  Le~Song.
\newblock {SBEED}: Convergent reinforcement learning with nonlinear function
  approximation.
\newblock 80:\penalty0 1125--1134, 10--15 Jul 2018.

\bibitem[Daskalakis et~al.(2018)Daskalakis, Ilyas, Syrgkanis, and
  Zeng]{daskalakis2018training}
Constantinos Daskalakis, Andrew Ilyas, Vasilis Syrgkanis, and Haoyang Zeng.
\newblock Training gans with optimism.
\newblock In \emph{International Conference on Learning Representations (ICLR
  2018)}, 2018.

\bibitem[Du et~al.(2019)Du, Lee, Li, Wang, and Zhai]{du2019gradient}
Simon Du, Jason Lee, Haochuan Li, Liwei Wang, and Xiyu Zhai.
\newblock Gradient descent finds global minima of deep neural networks.
\newblock In \emph{International Conference on Machine Learning}, pages
  1675--1685, 2019.

\bibitem[Du and Hu(2019)]{du2019linear}
Simon~S Du and Wei Hu.
\newblock Linear convergence of the primal-dual gradient method for
  convex-concave saddle point problems without strong convexity.
\newblock In \emph{The 22nd International Conference on Artificial Intelligence
  and Statistics}, pages 196--205, 2019.

\bibitem[El~Ghaoui and Lebret(1997)]{el1997robust}
Laurent El~Ghaoui and Herv{\'e} Lebret.
\newblock Robust solutions to least-squares problems with uncertain data.
\newblock \emph{SIAM Journal on matrix analysis and applications}, 18\penalty0
  (4):\penalty0 1035--1064, 1997.

\bibitem[Facchinei and Pang(2007)]{facchinei2007finite}
Francisco Facchinei and Jong-Shi Pang.
\newblock \emph{Finite-dimensional variational inequalities and complementarity
  problems}.
\newblock Springer Science \& Business Media, 2007.

\bibitem[Fazel et~al.(2018)Fazel, Ge, Kakade, and Mesbahi]{fazel2018global}
Maryam Fazel, Rong Ge, Sham Kakade, and Mehran Mesbahi.
\newblock Global convergence of policy gradient methods for the linear
  quadratic regulator.
\newblock In \emph{International Conference on Machine Learning}, pages
  1467--1476, 2018.

\bibitem[Gidel et~al.(2019)Gidel, Hemmat, Pezeshki, Le~Priol, Huang,
  Lacoste-Julien, and Mitliagkas]{gidel2019negative}
Gauthier Gidel, Reyhane~Askari Hemmat, Mohammad Pezeshki, R{\'e}mi Le~Priol,
  Gabriel Huang, Simon Lacoste-Julien, and Ioannis Mitliagkas.
\newblock Negative momentum for improved game dynamics.
\newblock In \emph{The 22nd International Conference on Artificial Intelligence
  and Statistics}, pages 1802--1811, 2019.

\bibitem[Goodfellow et~al.(2016)Goodfellow, Bengio, and
  Courville]{goodfellow2016deep}
Ian Goodfellow, Yoshua Bengio, and Aaron Courville.
\newblock \emph{Deep learning}.
\newblock MIT press, 2016.

\bibitem[Jin et~al.(2019)Jin, Netrapalli, and Jordan]{jin2019local}
Chi Jin, Praneeth Netrapalli, and Michael~I Jordan.
\newblock What is local optimality in nonconvex-nonconcave minimax
  optimization?
\newblock \emph{arXiv preprint arXiv:1902.00618}, 2019.

\bibitem[Johnson and Zhang(2013)]{johnson2013accelerating}
Rie Johnson and Tong Zhang.
\newblock Accelerating stochastic gradient descent using predictive variance
  reduction.
\newblock In \emph{Advances in neural information processing systems}, pages
  315--323, 2013.

\bibitem[Karimi et~al.(2016)Karimi, Nutini, and Schmidt]{karimi2016linear}
Hamed Karimi, Julie Nutini, and Mark Schmidt.
\newblock Linear convergence of gradient and proximal-gradient methods under
  the polyak-{\l}ojasiewicz condition.
\newblock In \emph{Joint European Conference on Machine Learning and Knowledge
  Discovery in Databases}, pages 795--811. Springer, 2016.

\bibitem[Korpelevich(1976)]{korpelevich1976extragradient}
GM~Korpelevich.
\newblock The extragradient method for finding saddle points and other
  problems.
\newblock \emph{Matecon}, 12:\penalty0 747--756, 1976.

\bibitem[Lin et~al.(2018)Lin, Liu, Rafique, and Yang]{lin2018solving}
Qihang Lin, Mingrui Liu, Hassan Rafique, and Tianbao Yang.
\newblock Solving weakly-convex-weakly-concave saddle-point problems as
  successive strongly monotone variational inequalities.
\newblock \emph{arXiv preprint arXiv:1810.10207}, 2018.

\bibitem[Lin et~al.(2019)Lin, Jin, and Jordan]{lin2019gradient}
Tianyi Lin, Chi Jin, and Michael~I Jordan.
\newblock On gradient descent ascent for nonconvex-concave minimax problems.
\newblock \emph{arXiv preprint arXiv:1906.00331}, 2019.

\bibitem[Lin et~al.(2020)Lin, Jin, Jordan, et~al.]{lin2020near}
Tianyi Lin, Chi Jin, Michael Jordan, et~al.
\newblock Near-optimal algorithms for minimax optimization.
\newblock \emph{arXiv preprint arXiv:2002.02417}, 2020.

\bibitem[Luo et~al.(2019)Luo, Chen, Li, Xie, and Zhang]{luo2019stochastic}
Luo Luo, Cheng Chen, Yujun Li, Guangzeng Xie, and Zhihua Zhang.
\newblock A stochastic proximal point algorithm for saddle-point problems.
\newblock \emph{arXiv preprint arXiv:1909.06946}, 2019.

\bibitem[Luo et~al.(2020)Luo, Ye, and Zhang]{luo2020stochastic}
Luo Luo, Haishan Ye, and Tong Zhang.
\newblock Stochastic recursive gradient descent ascent for stochastic
  nonconvex-strongly-concave minimax problems.
\newblock \emph{arXiv preprint arXiv:2001.03724}, 2020.

\bibitem[Luo and Tseng(1993)]{luo1993error}
Zhi-Quan Luo and Paul Tseng.
\newblock Error bounds and convergence analysis of feasible descent methods: a
  general approach.
\newblock \emph{Annals of Operations Research}, 46\penalty0 (1):\penalty0
  157--178, 1993.

\bibitem[Madry et~al.(2017)Madry, Makelov, Schmidt, Tsipras, and
  Vladu]{madry2017towards}
Aleksander Madry, Aleksandar Makelov, Ludwig Schmidt, Dimitris Tsipras, and
  Adrian Vladu.
\newblock Towards deep learning models resistant to adversarial attacks.
\newblock \emph{arXiv preprint arXiv:1706.06083}, 2017.

\bibitem[Mazumdar and Ratliff(2018)]{mazumdar2018convergence}
Eric Mazumdar and Lillian~J Ratliff.
\newblock On the convergence of gradient-based learning in continuous games.
\newblock \emph{ArXiv e-prints}, 2018.

\bibitem[Mescheder et~al.(2018)Mescheder, Geiger, and
  Nowozin]{mescheder2018training}
Lars Mescheder, Andreas Geiger, and Sebastian Nowozin.
\newblock Which training methods for gans do actually converge?
\newblock In \emph{International Conference on Machine Learning}, pages
  3481--3490, 2018.

\bibitem[Namkoong and Duchi(2016)]{namkoong2016stochastic}
Hongseok Namkoong and John~C Duchi.
\newblock Stochastic gradient methods for distributionally robust optimization
  with f-divergences.
\newblock In \emph{Advances in Neural Information Processing Systems}, pages
  2208--2216, 2016.

\bibitem[Namkoong and Duchi(2017)]{namkoong2017variance}
Hongseok Namkoong and John~C Duchi.
\newblock Variance-based regularization with convex objectives.
\newblock In \emph{Advances in Neural Information Processing Systems}, pages
  2971--2980, 2017.

\bibitem[Nash(1953)]{nash1953two}
John Nash.
\newblock Two-person cooperative games.
\newblock \emph{Econometrica: Journal of the Econometric Society}, pages
  128--140, 1953.

\bibitem[Necoara et~al.(2018)Necoara, Nesterov, and Glineur]{necoara2018linear}
Ion Necoara, Yu~Nesterov, and Francois Glineur.
\newblock Linear convergence of first order methods for non-strongly convex
  optimization.
\newblock \emph{Mathematical Programming}, pages 1--39, 2018.

\bibitem[Nesterov and Scrimali(2006)]{nesterov2006solving}
Yurii Nesterov and Laura Scrimali.
\newblock Solving strongly monotone variational and quasi-variational
  inequalities.
\newblock \emph{Available at SSRN 970903}, 2006.

\bibitem[Nouiehed et~al.(2019)Nouiehed, Sanjabi, Huang, Lee, and
  Razaviyayn]{nouiehed2019solving}
Maher Nouiehed, Maziar Sanjabi, Tianjian Huang, Jason~D Lee, and Meisam
  Razaviyayn.
\newblock Solving a class of non-convex min-max games using iterative first
  order methods.
\newblock In \emph{Advances in Neural Information Processing Systems}, pages
  14905--14916, 2019.

\bibitem[Palaniappan and Bach(2016)]{palaniappan2016stochastic}
Balamurugan Palaniappan and Francis Bach.
\newblock Stochastic variance reduction methods for saddle-point problems.
\newblock In \emph{Advances in Neural Information Processing Systems}, pages
  1416--1424, 2016.

\bibitem[Polyak(1963)]{polyak1963gradient}
Boris~Teodorovich Polyak.
\newblock Gradient methods for minimizing functionals.
\newblock \emph{Zhurnal Vychislitel'noi Matematiki i Matematicheskoi Fiziki},
  3\penalty0 (4):\penalty0 643--653, 1963.

\bibitem[Qian et~al.(2019)Qian, Zhu, Tang, Jin, Sun, and Li]{qian2019robust}
Qi~Qian, Shenghuo Zhu, Jiasheng Tang, Rong Jin, Baigui Sun, and Hao Li.
\newblock Robust optimization over multiple domains.
\newblock In \emph{Proceedings of the AAAI Conference on Artificial
  Intelligence}, volume~33, pages 4739--4746, 2019.

\bibitem[Reddi et~al.(2016{\natexlab{a}})Reddi, Hefny, Sra, Poczos, and
  Smola]{reddi2016stochastic}
Sashank~J Reddi, Ahmed Hefny, Suvrit Sra, Barnabas Poczos, and Alex Smola.
\newblock Stochastic variance reduction for nonconvex optimization.
\newblock In \emph{International conference on machine learning}, pages
  314--323, 2016{\natexlab{a}}.

\bibitem[Reddi et~al.(2016{\natexlab{b}})Reddi, Sra, P{\'o}czos, and
  Smola]{reddi2016fast}
Sashank~J Reddi, Suvrit Sra, Barnab{\'a}s P{\'o}czos, and Alex Smola.
\newblock Fast incremental method for smooth nonconvex optimization.
\newblock In \emph{2016 IEEE 55th Conference on Decision and Control (CDC)},
  pages 1971--1977. IEEE, 2016{\natexlab{b}}.

\bibitem[Sinha et~al.(2017)Sinha, Namkoong, and Duchi]{sinha2017certifiable}
Aman Sinha, Hongseok Namkoong, and John Duchi.
\newblock Certifiable distributional robustness with principled adversarial
  training.
\newblock \emph{stat}, 1050:\penalty0 29, 2017.

\bibitem[Sun et~al.(2018)Sun, Qu, and Wright]{sun2018geometric}
Ju~Sun, Qing Qu, and John Wright.
\newblock A geometric analysis of phase retrieval.
\newblock \emph{Foundations of Computational Mathematics}, 18\penalty0
  (5):\penalty0 1131--1198, 2018.

\bibitem[Thekumparampil et~al.(2019)Thekumparampil, Jain, Netrapalli, and
  Oh]{thekumparampil2019efficient}
Kiran~K Thekumparampil, Prateek Jain, Praneeth Netrapalli, and Sewoong Oh.
\newblock Efficient algorithms for smooth minimax optimization.
\newblock In \emph{Advances in Neural Information Processing Systems}, pages
  12659--12670, 2019.

\bibitem[Von~Neumann et~al.(2007)Von~Neumann, Morgenstern, and
  Kuhn]{von2007theory}
John Von~Neumann, Oskar Morgenstern, and Harold~William Kuhn.
\newblock \emph{Theory of games and economic behavior (commemorative edition)}.
\newblock Princeton university press, 2007.

\bibitem[Xiao and Zhang(2014)]{xiao2014proximal}
Lin Xiao and Tong Zhang.
\newblock A proximal stochastic gradient method with progressive variance
  reduction.
\newblock \emph{SIAM Journal on Optimization}, 24\penalty0 (4):\penalty0
  2057--2075, 2014.

\bibitem[Xu et~al.(2005)Xu, Neufeld, Larson, and Schuurmans]{xu2005maximum}
Linli Xu, James Neufeld, Bryce Larson, and Dale Schuurmans.
\newblock Maximum margin clustering.
\newblock In \emph{Advances in neural information processing systems}, pages
  1537--1544, 2005.

\bibitem[Zhang and Yin(2013)]{zhang2013gradient}
Hui Zhang and Wotao Yin.
\newblock Gradient methods for convex minimization: better rates under weaker
  conditions.
\newblock \emph{arXiv preprint arXiv:1303.4645}, 2013.

\bibitem[Zhou et~al.(2016)Zhou, Zhang, and Liang]{zhou2016geometrical}
Yi~Zhou, Huishuai Zhang, and Yingbin Liang.
\newblock Geometrical properties and accelerated gradient solvers of non-convex
  phase retrieval.
\newblock In \emph{2016 54th Annual Allerton Conference on Communication,
  Control, and Computing (Allerton)}, pages 331--335. IEEE, 2016.

\end{thebibliography}
\bibliographystyle{plainnat}

\newpage
\onecolumn
\appendix

\noindent  {\Large \bf Appendix}
\section{Proofs for Section \ref{Sec2}}
\label{appendix1}

We first present several key lemmas.

\begin{lemma}[\citet{karimi2016linear}]  \label{PL to EB QG}
If $f(\cdot)$ is l-smooth and it satisfies PL with constant $\mu$, then it also satisfies error bound (EB) condition with $\mu$, i.e.
\begin{equation*}
    \Vert \nabla f(x)\Vert \geq \mu \Vert x_p-x\Vert, \forall x,
\end{equation*}
where $x_p$ is the projection of $x$ onto the optimal set, also it satisfies quadratic growth (QG) condition with $\mu$, i.e.
\begin{equation*}
     f(x)-f^*\geq \frac{\mu}{2}\Vert x_p-x\Vert^2, \forall x.
\end{equation*}
Conversely, if $f(\cdot)$ is l-smooth and it satisfies EB with constant $\mu$, then it satisfies PL with constant $\mu/l$. 
\end{lemma}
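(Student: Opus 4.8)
The plan is to prove the three implications separately: (PL) $\Rightarrow$ (QG) and (PL) $\Rightarrow$ (EB) together via one argument, and the converse (EB) $\Rightarrow$ (PL) via a short smoothness computation along a line segment. Throughout, $x_p$ denotes the projection of $x$ onto the optimal set, which exists and is closed under the standing assumptions.

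\textbf{From PL to QG and EB.} The crux is to bound $\|x_0 - x_p\|$ for an arbitrary $x_0$ by $\sqrt{f(x_0) - f^*}$. I would use the gradient flow $\dot{x}(t) = -\nabla f(x(t))$, $x(0) = x_0$, which exists and is unique on $[0,\infty)$ because $\nabla f$ is globally $l$-Lipschitz. Along the flow, $\frac{d}{dt}\big(f(x(t)) - f^*\big) = -\|\nabla f(x(t))\|^2 \leq -2\mu\big(f(x(t)) - f^*\big)$, so $f(x(t)) \to f^*$ monotonically; and whenever $f(x(t)) > f^*$, applying PL in the denominator gives $-\frac{d}{dt}\sqrt{f(x(t)) - f^*} = \frac{\|\nabla f(x(t))\|^2}{2\sqrt{f(x(t)) - f^*}} \geq \sqrt{\mu/2}\,\|\nabla f(x(t))\|$. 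Integrating over $[0,\infty)$ bounds the arc length of the trajectory, $\int_0^\infty \|\dot{x}(t)\|\,dt = \int_0^\infty \|\nabla f(x(t))\|\,dt \leq \sqrt{2/\mu}\,\sqrt{f(x_0) - f^*}$. Finite length makes $x(t)$ Cauchy, hence convergent to some $\bar{x}$, and continuity together with $f(x(t)) \to f^*$ forces $f(\bar{x}) = f^*$, so $\bar{x}$ lies in the optimal set. Consequently $\|x_0 - x_p\| \leq \|x_0 - \bar{x}\| \leq \sqrt{2/\mu}\,\sqrt{f(x_0) - f^*}$; squaring yields the QG inequality $f(x_0) - f^* \geq \frac{\mu}{2}\|x_0 - x_p\|^2$, and substituting PL in the form $\sqrt{f(x_0) - f^*} \leq \frac{1}{\sqrt{2\mu}}\|\nabla f(x_0)\|$ yields $\|x_0 - x_p\| \leq \frac{1}{\mu}\|\nabla f(x_0)\|$, i.e. the EB inequality.

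\textbf{From EB to PL.} Fix $x$ and let $x_p$ be its projection onto the optimal set, so $\nabla f(x_p) = 0$. Along the segment $\gamma(\theta) = x + \theta(x_p - x)$, $\theta \in [0,1]$, the fundamental theorem of calculus gives $f(x) - f^* = -\int_0^1 \langle \nabla f(\gamma(\theta)), x_p - x\rangle\,d\theta$, while $l$-smoothness gives $\|\nabla f(\gamma(\theta))\| = \|\nabla f(\gamma(\theta)) - \nabla f(x_p)\| \leq l(1-\theta)\|x - x_p\|$. The Cauchy--Schwarz inequality and integration then yield $f(x) - f^* \leq \frac{l}{2}\|x - x_p\|^2$, and plugging in the EB bound $\|x - x_p\| \leq \frac{1}{\mu}\|\nabla f(x)\|$ turns this into $f(x) - f^* \leq \frac{l}{2\mu^2}\|\nabla f(x)\|^2$, which is the PL inequality with the claimed constant.

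\textbf{Main obstacle.} The delicate part is the PL $\Rightarrow$ QG/EB direction: it is not enough that $f(x(t)) \to f^*$; one must show the \emph{trajectory} converges to a single optimal point with quantitatively controlled total length, and the substitution $\sqrt{f - f^*}$ is precisely the device that makes the length integral finite (this is the standard mechanism behind {\L}ojasiewicz-type gradient inequalities). A fully discrete surrogate --- running gradient descent with stepsize $1/l$ and telescoping $\|x_{k+1} - x_k\|$ with the same $\sqrt{\cdot}$ manipulation --- also works and sidesteps ODE well-posedness, but produces a strictly worse constant in QG, so the continuous-time argument is preferable for obtaining the sharp constant $\mu$.
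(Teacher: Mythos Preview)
The paper does not actually prove this lemma; it is quoted verbatim from \citet{karimi2016linear} and used as a black box. Your write-up is essentially the standard argument from that reference: the {\L}ojasiewicz/gradient-flow length bound for PL $\Rightarrow$ QG (and then EB by substituting PL once more), and the descent-lemma bound $f(x)-f^*\le \frac{l}{2}\|x-x_p\|^2$ combined with EB for the converse. So there is nothing to compare against in this paper, and your approach matches the cited source.

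One quantitative slip in the converse direction: from $f(x)-f^* \le \frac{l}{2\mu^2}\|\nabla f(x)\|^2$ you obtain $\frac{1}{2}\|\nabla f(x)\|^2 \ge \frac{\mu^2}{l}\,(f(x)-f^*)$, i.e.\ PL with constant $\mu^2/l$, \emph{not} $\mu/l$ as the lemma states. Your sentence ``which is the PL inequality with the claimed constant'' is therefore incorrect. (This is arguably an imprecision in the lemma as stated here; the standard smoothness-plus-EB argument indeed yields $\mu^2/l$. It does not affect any of the paper's main theorems, which only invoke the forward direction PL $\Rightarrow$ QG, though it would change the numerical constants quoted in the paper's Proposition~1.) Also, a trivial shortcut: since $\nabla f(x_p)=0$, $l$-smoothness already gives $f(x)-f^*\le \frac{l}{2}\|x-x_p\|^2$ directly from the quadratic upper bound, so your line-integral computation is unnecessary.
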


From the above lemma, we easily derive that $l\geq \mu$.

\begin{lemma} [\citet{nouiehed2019solving}]  \label{g smooth}
In the minimax problem, when $-f(x,\cdot)$ satisfies PL condition with constant $\mu_2$ for any $x$ and $f$ satisfies Assumption \ref{Lipscthitz gradient}, then the function $g(x) := \max_y f(x,y)$ is $L$-smooth with $L:=l+l^2/\mu_2$ and $\nabla g(x) = \nabla_x f(x, y^*(x))$ for any $y^*(x)\in \arg\max_y f(x,y)$. 
\end{lemma}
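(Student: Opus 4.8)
\textbf{Proof proposal for Lemma~\ref{g smooth}.}

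The plan is to establish the two claims separately: first the gradient formula $\nabla g(x) = \nabla_x f(x, y^*(x))$, and then the $L$-smoothness of $g$. For the gradient formula, the key obstacle is that $y^*(x)$ need not be unique, so I cannot directly invoke Danskin's theorem in its classical differentiable form. However, under the PL condition Lemma~\ref{PL to EB QG} gives a quadratic-growth lower bound for $-f(x,\cdot)$ around its maximizer set, which is exactly what is needed to make a Danskin-type argument go through. Concretely, I would fix $x$ and a point $y^*(x) \in \arg\max_y f(x,y)$, and show that for any direction $u$ and small $t>0$,
\begin{equation*}
  f(x + tu, y^*(x)) \leq g(x+tu) \leq f(x+tu, y^*(x+tu)),
\end{equation*}
then bound $g(x+tu) - g(x)$ from both sides. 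The lower bound comes immediately from the first inequality plus a first-order Taylor expansion of $f(\cdot, y^*(x))$ in its first argument. For the upper bound I would use that $y^*(x+tu)$ lies within $O(t)$ of the maximizer set of $-f(x+tu,\cdot)$, which by the EB/QG characterization and Lipschitz continuity of $\nabla_y f$ forces $\|y^*(x+tu) - y^*(x)\| = O(t)$; combining with the Lipschitz gradient assumption yields that the upper bound also equals $\langle \nabla_x f(x, y^*(x)), tu\rangle + o(t)$. Hence $g$ is differentiable at $x$ with the claimed gradient.

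For the smoothness estimate, once the gradient formula is in hand, I would take two points $x_1, x_2$ and write
\begin{equation*}
  \|\nabla g(x_1) - \nabla g(x_2)\| = \|\nabla_x f(x_1, y^*(x_1)) - \nabla_x f(x_2, y^*(x_2))\|
  \leq l\big(\|x_1 - x_2\| + \|y^*(x_1) - y^*(x_2)\|\big)
\end{equation*}
using Assumption~\ref{Lipscthitz gradient}. It therefore remains to control $\|y^*(x_1) - y^*(x_2)\|$ by $(l/\mu_2)\|x_1 - x_2\|$. This is the heart of the argument: I would show that $x \mapsto y^*(x)$ (more precisely, the projection onto the maximizer set) is $(l/\mu_2)$-Lipschitz. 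To do this, note $y^*(x_1)$ is a maximizer of $-f(x_1,\cdot)$, so $\nabla_y f(x_1, y^*(x_1)) = 0$; apply the EB condition for $-f(x_2,\cdot)$ at the point $y^*(x_1)$ to get $\mu_2 \|y^*(x_1) - \proj(y^*(x_1))\| \leq \|\nabla_y f(x_2, y^*(x_1))\| = \|\nabla_y f(x_2, y^*(x_1)) - \nabla_y f(x_1, y^*(x_1))\| \leq l\|x_1 - x_2\|$, where $\proj(\cdot)$ denotes projection onto $\arg\max_y f(x_2, y)$. A symmetric bookkeeping argument (or picking the maximizers compatibly) then yields $\|y^*(x_1) - y^*(x_2)\| \leq (l/\mu_2)\|x_1 - x_2\|$. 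Plugging this into the display above gives $\|\nabla g(x_1) - \nabla g(x_2)\| \leq l(1 + l/\mu_2)\|x_1 - x_2\| = L\|x_1 - x_2\|$ with $L = l + l^2/\mu_2$.

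The main obstacle I anticipate is handling non-uniqueness of $y^*(x)$ cleanly — both in the Danskin-type differentiability argument and in the Lipschitz estimate for the maximizer map, where one must be careful about which element of the maximizer set is chosen and ensure the projection-based bounds compose correctly. Since this lemma is quoted from \citet{nouiehed2019solving}, I would cite that reference for the full details and present only the skeleton above, emphasizing that the PL condition on $-f(x,\cdot)$ is precisely the ingredient that substitutes for strong concavity in the classical statement.
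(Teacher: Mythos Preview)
The paper does not supply its own proof of this lemma: it is simply stated with a citation to \citet{nouiehed2019solving}, so there is nothing to compare your argument against line by line. Your sketch is essentially the standard argument from that reference and is correct in outline. The one place to be careful is the ``symmetric bookkeeping'' remark: what your EB step actually shows is that for any maximizer $y^*(x_1)$ there \emph{exists} a maximizer of $f(x_2,\cdot)$ (namely the projection) within distance $(l/\mu_2)\|x_1-x_2\|$, not that an arbitrarily chosen pair $y^*(x_1), y^*(x_2)$ satisfies this bound. This is enough, because once you have established that $\nabla g(x) = \nabla_x f(x, y^*(x))$ holds for \emph{every} element of the maximizer set, you are free to pick that particular projected maximizer as your $y^*(x_2)$ when bounding $\|\nabla g(x_1)-\nabla g(x_2)\|$. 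Stating it that way removes the apparent circularity and avoids the impression that a genuine set-valued Lipschitz property is being claimed.
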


\begin{lemma}  \label{g PL}
In the minimax problem \ref{objective}, when the objective function $f$ satisfies Assumption \ref{Lipscthitz gradient} (Lipschitz gradient) and the two-sided PL condition with constant $\mu_1$ and $\mu_2$, then function $g( x ) := \max_{ y }f( x ,  y )$ satisfies the PL condition with $\mu_1$. 
\end{lemma}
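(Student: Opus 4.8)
The plan is to combine the Danskin-type identity from Lemma \ref{g smooth} with the one-sided PL inequality that $f$ satisfies in the $x$-variable. The key observation is that the envelope $g(x) = \max_y f(x,y)$ has gradient $\nabla g(x) = \nabla_x f(x, y^*(x))$ for \emph{any} maximizer $y^*(x) \in \arg\max_y f(x,y)$, so one can evaluate the $x$-PL inequality of $f$ at the slice $(x, y^*(x))$ and transfer it to $g$.

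First I would fix an arbitrary $x \in \mathbb{R}^{d_1}$ and choose any $y^*(x) \in \arg\max_y f(x,y)$, which is nonempty under the standing assumptions. By Lemma \ref{g smooth}, $\nabla g(x) = \nabla_x f(x, y^*(x))$, hence $\|\nabla g(x)\|^2 = \|\nabla_x f(x, y^*(x))\|^2$. Applying the PL condition of $f$ with respect to $x$ at the point $(x, y^*(x))$ then gives
\[
\|\nabla g(x)\|^2 \;\geq\; 2\mu_1\Big[f(x, y^*(x)) - \min_{x'} f(x', y^*(x))\Big].
\]

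Next I would identify the two terms on the right. Since $y^*(x)$ maximizes $f(x,\cdot)$, we have $f(x, y^*(x)) = g(x)$. For the minimum term, note that $f(x', y^*(x)) \leq \max_{y'} f(x', y') = g(x')$ for every $x'$, so minimizing over $x'$ yields $\min_{x'} f(x', y^*(x)) \leq \min_{x'} g(x') = g^*$. Substituting both facts into the displayed inequality gives $\|\nabla g(x)\|^2 \geq 2\mu_1\,[g(x) - g^*]$, which is exactly the PL condition for $g$ with constant $\mu_1$; since $x$ was arbitrary, this proves the lemma.

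The argument is short and there is no real obstacle beyond bookkeeping: one should check that the relevant maximizers and minimizers exist and that $g^*$ is finite and attained (which follows from Assumption \ref{Existence of global minimax point} together with the two-sided PL condition), and one must invoke Lemma \ref{g smooth} precisely for the point that the gradient formula $\nabla g(x) = \nabla_x f(x, y^*(x))$ holds for every maximizer $y^*(x)$, so we are free to plug in the same $y^*(x)$ in both the gradient and the PL bound.
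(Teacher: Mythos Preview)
Your proposal is correct and follows essentially the same route as the paper's proof: invoke Lemma~\ref{g smooth} to write $\|\nabla g(x)\|^2 = \|\nabla_x f(x,y^*(x))\|^2$, apply the $x$-side PL inequality at $(x,y^*(x))$, then use $f(x,y^*(x)) = g(x)$ and $\min_{x'} f(x',y^*(x)) \leq g^*$ to conclude. The only cosmetic difference is that you spell out the bookkeeping about existence of maximizers and finiteness of $g^*$ a bit more explicitly.
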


\begin{proof}
From Lemma \ref{g smooth}, 
\begin{equation*}
    \Vert \nabla g( x )\Vert^2 = \Vert \nabla_xf( x ,  y ^*( x ))\Vert^2.
\end{equation*}
Since $f(\cdot,  y )$ satisfies PL condition with constant $\mu_1$, we get
\begin{equation}
    \Vert \nabla g( x )\Vert^2 \geq 2\mu_1[f( x ,  y ^*( x )) - \min_{x'}f( x ', y ^*( x ))]. \label{g_PL1}
\end{equation}
Also, 
\begin{equation}
    f( x ', y ^*( x ))\leq \max_y f( x ', y ) \Longrightarrow  \min_{x'}f( x ', y ^*( x ))\leq \min_{x'}\max_y f( x ', y ) = g^*. \label{g-PL2}
\end{equation}
Combining equation (\ref{g_PL1}) and (\ref{g-PL2}), we obtain, 
\begin{equation*}
    \Vert \nabla g( x )\Vert^2 \geq 2\mu_1 (g( x )- g^*).
\end{equation*}
\end{proof}

The following lemma states that stochastic gradient descent converges linearly to the neighbourhood of the optimal set under PL condition. The proof is based on \citep{karimi2016linear}.

\begin{lemma}  \label{min linear}
Consider the optimization problem $\min_x f(x)=\mathbb{E}[F(x;\xi)]$, where $f$ is $l$-smooth and satisfies PL condition with constant $\mu$. Using the stochastic gradient descent with step size $\tau \leq 1/l$,
\begin{equation*}
    x_{t+1} = x_t -\tau G(x_t,\xi_t),
\end{equation*}
where 
\begin{equation*}
    \mathbb{E}[G(x,\xi)-\nabla f(x)] = 0, \qquad \mathbb{E}[\Vert G(x,\xi)-\nabla f(x)\Vert^2]\leq \sigma^2,
\end{equation*}
then we have 
\begin{equation*}
    \mathbb{E}[f(x_{t+1})-f^*]\leq (1-\mu\tau)\mathbb{E}[f(x_{t})-f^*]+\frac{l\tau^2}{2}\sigma^2.
\end{equation*} \label{A9}
\end{lemma}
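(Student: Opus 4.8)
\textbf{Proof proposal for Lemma \ref{A9}.}

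The plan is to follow the standard descent-lemma argument for smooth functions, then invoke the PL inequality to turn the resulting gradient-norm term into a contraction on the suboptimality gap. First I would start from $l$-smoothness of $f$, which gives the quadratic upper bound
\begin{equation*}
    f(x_{t+1}) \leq f(x_t) + \langle \nabla f(x_t), x_{t+1}-x_t\rangle + \frac{l}{2}\|x_{t+1}-x_t\|^2.
\end{equation*}
Substituting the update $x_{t+1}-x_t = -\tau G(x_t,\xi_t)$ and writing $G(x_t,\xi_t) = \nabla f(x_t) + e_t$ where $e_t := G(x_t,\xi_t) - \nabla f(x_t)$ is the zero-mean noise, I would expand the inner product and the squared-norm term, collecting the terms in $\|\nabla f(x_t)\|^2$, the cross term $\langle \nabla f(x_t), e_t\rangle$, and $\|e_t\|^2$.

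Next I would take conditional expectation with respect to $\xi_t$ (given $x_t$). The cross term vanishes because $\mathbb{E}[e_t \mid x_t] = 0$, the $\|e_t\|^2$ term is bounded by $\sigma^2$, and the coefficient on $\|\nabla f(x_t)\|^2$ is $-\tau + \frac{l\tau^2}{2} = -\tau(1 - \frac{l\tau}{2})$. Using the stepsize restriction $\tau \leq 1/l$, we have $1 - \frac{l\tau}{2} \geq \frac12$, so this coefficient is at most $-\frac{\tau}{2}$. This yields
\begin{equation*}
    \mathbb{E}[f(x_{t+1}) \mid x_t] \leq f(x_t) - \frac{\tau}{2}\|\nabla f(x_t)\|^2 + \frac{l\tau^2}{2}\sigma^2.
\end{equation*}
Then I would apply the PL condition, $\frac12\|\nabla f(x_t)\|^2 \geq \mu(f(x_t) - f^*)$, to replace $-\frac{\tau}{2}\|\nabla f(x_t)\|^2$ by $-\mu\tau(f(x_t)-f^*)$; subtracting $f^*$ from both sides gives $\mathbb{E}[f(x_{t+1}) - f^* \mid x_t] \leq (1-\mu\tau)(f(x_t)-f^*) + \frac{l\tau^2}{2}\sigma^2$. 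Finally, taking total expectation over $x_t$ removes the conditioning and produces the claimed recursion.

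There is no serious obstacle here — the argument is entirely routine. The only points requiring a little care are the handling of the noise decomposition (making sure the cross term is genuinely zero under the unbiasedness assumption, and that the variance bound is applied to $\|e_t\|^2$ rather than $\|G\|^2$) and verifying that the stepsize bound $\tau \le 1/l$ is exactly what is needed to make $1 - l\tau/2 \ge 1/2$, so that the PL inequality can be applied with the clean coefficient $\mu\tau$. Also note that $1 - \mu\tau \in [0,1)$ since $\mu \le l$ (as noted just after Lemma \ref{PL to EB QG}) and $\tau \le 1/l$, so the recursion is genuinely contractive.
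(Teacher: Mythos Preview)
Your proposal is correct and follows essentially the same route as the paper: descent lemma from $l$-smoothness, expectation to kill the cross term and bound the noise by $\sigma^2$, the stepsize bound $\tau\le 1/l$ to get the coefficient $-\tau/2$ on $\|\nabla f(x_t)\|^2$, and then the PL inequality. The only cosmetic difference is that the paper takes total expectation directly and uses the bias--variance decomposition $\mathbb{E}\|G\|^2=\mathbb{E}\|\nabla f\|^2+\mathbb{E}\|G-\nabla f\|^2$, whereas you condition on $x_t$ first and expand via $G=\nabla f+e_t$; the resulting bounds are identical.
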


\begin{proof}
By smoothness of $f$ we have
\begin{align*}
    f(x_{t+1})-f^* &\leq f(x_t) + \langle \nabla f(x_t), x_{t+1}-x_t\rangle +\frac{l}{2}\Vert x_{t+1}-x\Vert^2 -f^*\\
    &= f(x_t) - \tau\langle \nabla f(x_t), G(x_t,\xi_t)\rangle +\frac{l\tau^2}{2}\Vert G(x_t,\xi_t)\Vert^2 -f^*.
\end{align*}
Taking expectation of both sides, we get
\begin{align} \nonumber
    \mathbb{E}[f(x_{t+1})-f^*] \leq& \mathbb{E}[f(x_t)-f^*] -\tau\mathbb{E}[\Vert\nabla f(x_t)\Vert^2] + \frac{l\tau^2}{2} \mathbb{E}[\Vert G(x_t,\xi_t)\Vert^2] \\ \nonumber
    = & \mathbb{E}[f(x_t)-f^*] -\tau\mathbb{E}[\Vert\nabla f(x_t)\Vert^2] + \frac{l\tau^2}{2} \mathbb{E}[\Vert \nabla f(x_t)\Vert^2] \\ \nonumber
    &\qquad+ \frac{l\tau^2}{2}\mathbb{E}[\Vert \nabla f(x_t)-G(x_t,\xi_t)\Vert^2] \\ \nonumber
    \leq& \mathbb{E}[f(x_t)-f^*] -\frac{\tau}{2}\mathbb{E}[\Vert\nabla f(x_t)\Vert^2] +\frac{l\tau^2}{2}\sigma^2 \\ \nonumber
    \leq& (1-\mu\tau)\mathbb{E}[f(x_t)-f^*] + \frac{l\tau^2}{2}\sigma^2,
\end{align}
where in the equality we use $ \mathbb{E}[ G(x_t,\xi_t)] = \nabla f(x_t)$, in the second inequality we use $\tau\leq 1/l$, and we use PL condition in the last inequality.
\end{proof}

\textbf{Proof for Lemma \ref{equivalent optimality}}.
\begin{proof}
\begin{itemize}
    \item (stationary point) $\Longrightarrow$ (saddle point): From the definition of PL condition, if $( x ^*, y ^*) $ is a stationary point,
    \begin{align*}
    \max_{y}f( x ^*, y )-f( x ^*, y ^*)\leq \frac{1}{2\mu_2}\left\Vert \nabla_yf( x ^*, y ^*)\right\Vert^2 = 0,\\
     f( x ^*, y ^*)-\min_{ x }f( x ,  y ^*)\leq \frac{1}{2\mu_1}\left\Vert \nabla_xf( x ^*, y ^*)\right\Vert^2 = 0,
\end{align*}
so $\max_{ y }f( x ^*, y )=f( x ^*, y ^*)=\min_{ x }f( x ,  y ^*)$, and therefore $f( x ^*,  y ^*)$ is a saddle point.
\item (saddle point) $\Longrightarrow$ (global minimax point): Follow from definitions.
\item (global minimax point) $\Longrightarrow$ (stationary point): If $( x ^*, y ^*) $ is a global minimax point, then by definition, \begin{equation*}
     y ^* \in\arg\max_y f( x ^*,  y ^*),
     x ^* \in\arg\min_x g( x ),
\end{equation*}
Then by first order necessary condition, we have,
\begin{equation*}
    \nabla_y f( x ^*,  y ^*) = 0,\\
    \nabla g( x ^*) = 0,
\end{equation*}
Further with Lemma \ref{g smooth}, 
\begin{equation*}
    \nabla g( x ^*) = \nabla_x f( x ^*,  y ^*) = 0
\end{equation*}
Thus, $( x ^*, y ^*)$ is a stationary point. 
\end{itemize}
\end{proof}

\begin{prop}
The function
\begin{equation*}
    f(x,y) = x^2 + 3\sin^2x\sin^2y-4y^2-10\sin^2y, 
\end{equation*}
satisfies the two-sided PL condition with $\mu_1 = 1/16, \mu_2 = 1/14$.
\end{prop}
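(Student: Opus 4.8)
The plan is to reduce each of the two PL inequalities to a one-variable inequality with a bounded parameter and then dispatch it by a short case analysis on the magnitude of that variable. First I would pin down the two inner optimal values. Writing $f(x,y) = \big(x^2 + 3\sin^2 x\,\sin^2 y\big) + \big(-4y^2 - 10\sin^2 y\big)$, the first bracket is nonnegative and vanishes exactly at $x=0$, so $\min_x f(x,y) = f(0,y)$ and $f(x,y) - \min_x f(x,y) = x^2 + 3\sin^2 x\,\sin^2 y$. Symmetrically, $f(x,y) = x^2 + \big(3\sin^2 x - 10\big)\sin^2 y - 4y^2$ with $3\sin^2 x - 10 \le -7 < 0$, so the $y$-dependent part is $\le 0$ and vanishes exactly at $y=0$; hence $\max_y f(x,y) = f(x,0) = x^2$ and $\max_y f(x,y) - f(x,y) = 4y^2 + (10 - 3\sin^2 x)\sin^2 y$.

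Next I would introduce the parameters $c := 3\sin^2 y \in [0,3]$ and $e := 10 - 3\sin^2 x \in [7,10]$. Then $\nabla_x f(x,y) = 2x + c\sin 2x$ and $\nabla_y f(x,y) = -\big(8y + e\sin 2y\big)$, and since both the gradients-squared and the objective gaps above are even in the respective variable, the two PL inequalities with $\mu_1 = \tfrac1{16}$ and $\mu_2 = \tfrac1{14}$ are equivalent to the scalar claims
\begin{equation*}
(2x + c\sin 2x)^2 \ge \tfrac18\,(x^2 + c\sin^2 x)\quad\forall\, x\ge 0,\ c\in[0,3],\qquad
(8y + e\sin 2y)^2 \ge \tfrac17\,(4y^2 + e\sin^2 y)\quad\forall\, y\ge 0,\ e\in[7,10].
\end{equation*}

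I would prove both with the same three ingredients: $\sin^2 t \le t^2$, $|\sin t| \le 1$, and — only on the band where the gradient can partially cancel — the estimate $|\sin 2t| \le 2t - \pi$, valid when $2t\in(\pi,2\pi)$. For the $x$-claim: (i) if $x \le \pi/2$ then $\sin 2x \ge 0$, so $2x + c\sin 2x \ge 2x$ while $x^2 + c\sin^2 x \le (1+c)x^2 \le 4x^2$, and $(2x)^2 \ge \tfrac18\cdot 4x^2$ with a large margin; (ii) if $x \ge 2$ then $2x + c\sin 2x \ge 2x - 3 \ge x/2$ and $x^2 + c\sin^2 x \le x^2 + 3 \le \tfrac74 x^2$, so $(x/2)^2 \ge \tfrac18\cdot\tfrac74 x^2$; (iii) on $\pi/2 < x < 2$ one has $2x\in(\pi,2\pi)$, $\sin 2x < 0$, and $|\sin 2x|\le 2x-\pi$, hence $2x + c\sin 2x = 2x - c|\sin 2x| \ge 2x - 3(2x-\pi) = 3\pi - 4x \ge 3\pi - 8 > 0$, while the right-hand side is at most $\tfrac18(4+3)=\tfrac78<(3\pi-8)^2$. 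The $y$-claim is analogous: for $y \le \pi/2$, $8y + e\sin 2y \ge 8y$ and $4y^2 + e\sin^2 y \le 14y^2$, which is immediate; for $y \ge \pi/2$, $8y + e\sin 2y \ge 8y - 10 > 0$ and $4y^2 + e\sin^2 y \le 4y^2 + 10$, and the claim becomes the scalar quadratic inequality $444u^2 - 1120u + 690 \ge 0$ with $u = y$, whose larger root ($\approx 1.45$) lies below $\pi/2$, so it holds throughout the range.

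The only non-mechanical point is step (iii) in each claim: the regime where the linear term and the sine term have opposite signs and comparable magnitude, so the crude bound $|2x + c\sin 2x|\ge 2x - c$ is too weak to dominate the (bounded but nonzero) objective gap on that compact band. The fix is the refinement $|\sin 2x|\le 2x-\pi$ near $2x=\pi$, which keeps the gradient bounded away from $0$ there; everywhere else the linear term dominates and the problem reduces to comparing two quadratics in one variable. I expect comfortable slack in every case, consistent with the stated constants $\mu_1=1/16$, $\mu_2=1/14$ being conservative.
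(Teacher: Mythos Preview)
Your argument is correct. The setup is exactly right, and the case split on each variable is clean and complete; I checked the numerical margins in all five cases (including the quadratic $444y^2-1120y+690\ge 0$, whose larger root $\approx 1.453$ is indeed below $\pi/2$), and they all hold. The only subtle step is case~(iii), and your refinement $|\sin 2x|\le 2x-\pi$ on $(\pi/2,2)$ is precisely what is needed to keep the gradient bounded away from zero there.

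Your route is genuinely different from the paper's. The paper does \emph{not} verify the PL inequalities directly; instead it (a) bounds the second derivatives to get one-variable smoothness constants $L_1=8$ and $L_2=28$, (b) asserts the error-bound inequalities $|\nabla_x f|\ge \tfrac12|x|$ and $|\nabla_y f|\ge 2|y|$, and (c) invokes the general implication ``$l$-smooth + EB with $\mu_{\mathrm{EB}}$ $\Rightarrow$ PL with $\mu_{\mathrm{EB}}/l$'' from \citet{karimi2016linear}, yielding $\mu_1=\tfrac{1/2}{8}=\tfrac1{16}$ and $\mu_2=\tfrac{2}{28}=\tfrac1{14}$. Compared with yours, the paper's approach explains where the specific constants $1/16$ and $1/14$ come from and is shorter to state, but it leans on an external lemma and does not actually justify the EB bounds (which, near $x\approx\pi/2$ with $c$ close to $3$, require essentially the same $|\sin 2x|\le 2x-\pi$ trick you used). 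Your direct verification is more self-contained and shows explicitly that there is ample slack in both constants.
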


\begin{proof}
It is not hard to derive that $\arg\min_x f(x,y)=0, \forall y$, and $\arg\max_y f(x,y) = 0, \forall x$, i.e. $x^*(y) = y^*(x) = 0, \forall x,y$. Therefore, $(0,0)$ is the only saddle point. Then compute the gradients:
\begin{align*}
    &\nabla_xf(x,y)=2x+3\sin^2(y)\sin(2x),\\
    &\nabla_yf(x,y)=-8y+3\sin^2(x)\sin(2y)-10\sin(2y).
\end{align*}
and 
\begin{align*}
    &\vert\nabla^2_xf(x,y)\vert = \vert 2 + 6\sin^2(y) \cos(2x)\vert \leq 8,\\
    &\vert\nabla^2_yf(x,y)\vert = \vert -8 + 6\sin^2(x)\cos(2y) - 20\cos(2y)\vert \leq 28.
\end{align*}
so $f(\cdot, y)$ is $L_1$-smooth with $L_1 = 8$ for any $x$ and $f(x, \cdot)$ is $L_2$-smooth with $L_2 = 28$ for any $y$. Then note that:
\begin{align*}
    &\frac{\vert \nabla_x f(x,y) \vert}{\vert x - x^*(y)\vert} = \frac{\vert \nabla_xf(x,y)\vert}{\vert x\vert} = \frac{\vert 2x+3\sin^2(y)\sin(2x)\vert }{\vert x\vert}\geq \frac{1}{2},\\
    &\frac{\vert \nabla_y f(x,y) \vert}{\vert y - y^*(x)\vert} = \frac{\vert \nabla_y f(x,y)\vert}{\vert y\vert} = \frac{\vert -8y+3\sin^2(x)\sin(2y)-10\sin(2y)\vert }{\vert y\vert}\geq 2
\end{align*}
So $f(\cdot,y)$ satisfies EB with $\mu_{EB1} = 1/2$, and -$f(x,\cdot)$ satisfies EB with $\mu_{EB2} = 2$. By Lemma \ref{PL to EB QG}, we have $f(\cdot,y)$ satisfies PL with constant $\mu_1 = 1/16$ and -$f(x,\cdot)$ satisfies PL with constant $\mu_1 = 1/14$. 

\end{proof}

\section{Proofs for Section \ref{Sec3}}
\label{appendix2}

Before we step into proofs for Theorem \ref{main}, \ref{main deterministic} and \ref{main diminishing}, we first present a contraction theorem for each iteration. 

\begin{theorem} \label{contraction theorem}
Assume Assumption \ref{Lipscthitz gradient}, \ref{Existence of global minimax point}, \ref{stochastic gradients} hold and $f(x, y)$ satisfies the two-sided PL condition with $\mu_1$ and $\mu_2$. Define $a_t = \mathbb{E}[g(x_t)-g^*]$ and $b_t = \mathbb{E}[g(x_t) - f(x_t, y_t)]$. If we run one iteration of Algorithm \ref{s-agda} with $\tau_1^t = \tau_1 \leq 1/L$ ($L$ is specified in Lemma \ref{g smooth}) and $\tau_2^t = \tau_2\leq 1/l$, then
\begin{equation*}
    a_{t+1} + \lambda b_{t+1} \leq \max\{k_1, k_2\}(a_t+\lambda b_t) +  \lambda(1-\mu_2\tau_2)\frac{L+l}{2}\tau_1^2\sigma^2 + \frac{l}{2}\lambda\tau_2^2\sigma^2 + \frac{L}{2}\tau_1^2\sigma^2,
\end{equation*}
where 
\begin{align} \label{k1}
    &k_1 := 1-\mu_1\big[\tau_1 + \lambda(1-\mu_2\tau_2)\tau_1 - \lambda(1+\beta)(1-\mu_2\tau_2)(2\tau_1 + l\tau_1^2)\big], \\ \label{k2}
    & k_2 := 1-\mu_2\tau_2 + \frac{l^2\tau_1}{\mu_2\lambda} + (1-\mu_2\tau_2)\frac{l^2}{\mu_2}\tau_1 + (1+\frac{1}{\beta})(1-\mu_2\tau_2)\frac{l^2}{\mu_2}(2\tau_1 + l\tau_1^2),
\end{align}
and $\lambda, \beta >0 $ such that $k_1 \leq 1$. 
\end{theorem}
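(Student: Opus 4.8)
The plan is to run one step of Stoc-AGDA and control the two quantities $a_{t+1}$ and $b_{t+1}$ separately, using the smoothness of $g$ (Lemma \ref{g smooth}), the PL property of $g$ (Lemma \ref{g PL}), and the two-sided PL condition, and then combine them with the weight $\lambda$ to produce the stated contraction. For the $a_{t+1}$ term, note that the $x$-update $x_{t+1}=x_t-\tau_1 G_x(x_t,y_t,\xi_{t1})$ is a stochastic step on the outer function $g$, except that $G_x(x_t,y_t,\xi_{t1})$ estimates $\nabla_x f(x_t,y_t)$ rather than $\nabla g(x_t)=\nabla_x f(x_t,y^*(x_t))$. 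So I would apply the descent lemma for the $L$-smooth function $g$ along the step, write $G_x(x_t,y_t,\xi_{t1})=\nabla_x f(x_t,y_t)+(\text{noise})$, take conditional expectation to kill the cross term, bound the noise second moment by $\sigma^2$ (Assumption \ref{stochastic gradients}), and then handle the bias $\nabla_x f(x_t,y_t)-\nabla g(x_t)$ via $\|\nabla_x f(x_t,y_t)-\nabla_x f(x_t,y^*(x_t))\|\le l\|y_t-y^*(x_t)\|$ and the quadratic-growth consequence of the $y$-side PL condition, $\|y_t-y^*(x_t)\|^2\le \tfrac{2}{\mu_2}(g(x_t)-f(x_t,y_t))$. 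This produces a recursion of the form $a_{t+1}\le (1-c_1\tau_1)a_t + (\text{positive})\,b_t + (\text{$\sigma^2$ terms})$, where the $b_t$ coefficient comes from the bias and will be absorbed by the $\lambda b_t$ slack in the potential.

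For the $b_{t+1}=\mathbb{E}[g(x_{t+1})-f(x_{t+1},y_{t+1})]$ term I would split it as $[g(x_{t+1})-g(x_t)] + [g(x_t)-f(x_t,y_t)] + [f(x_t,y_t)-f(x_{t+1},y_t)] + [f(x_{t+1},y_t)-f(x_{t+1},y_{t+1})]$. The first bracket is controlled by the $g$-descent bound already obtained; the third is controlled by Lipschitzness/smoothness of $f$ in $x$ along the $x$-step; the fourth is exactly a stochastic ascent step on $-f(x_{t+1},\cdot)$, which satisfies PL with $\mu_2$, so Lemma \ref{min linear} (applied to $-f(x_{t+1},\cdot)$, whose gap is $g(x_{t+1})-f(x_{t+1},\cdot)$) gives a contraction factor $(1-\mu_2\tau_2)$ on the gap plus an $\tfrac{l}{2}\tau_2^2\sigma^2$ noise term. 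Collecting, $b_{t+1}$ gets multiplied by $(1-\mu_2\tau_2)$ but picks up cross terms in $a_t$ and $b_t$ of order $\tau_1$ and $\tau_1^2$ coming from the $x$-step (these are the $2\tau_1+l\tau_1^2$ and $l^2/\mu_2$ factors visible in $k_1,k_2$), together with the $\sigma^2$ contributions $\lambda(1-\mu_2\tau_2)\tfrac{L+l}{2}\tau_1^2\sigma^2$, $\tfrac{l}{2}\lambda\tau_2^2\sigma^2$, and $\tfrac{L}{2}\tau_1^2\sigma^2$.

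The final step is bookkeeping: form $a_{t+1}+\lambda b_{t+1}$, use Young's inequality with parameter $\beta$ to split the cross terms between the $a$- and $b$-buckets (this is where the $(1+\beta)$ and $(1+1/\beta)$ factors enter $k_1$ and $k_2$), and read off that the coefficient of $a_t$ is at most $k_1$ and the coefficient of $b_t$ (after dividing by $\lambda$) is at most $k_2$, whence $a_{t+1}+\lambda b_{t+1}\le \max\{k_1,k_2\}(a_t+\lambda b_t)+(\sigma^2\text{ terms})$. The main obstacle, and the step requiring the most care, is the coupled bookkeeping: the $x$-step perturbs \emph{both} $a$ and $b$, so one must track how the bias term $\|y_t-y^*(x_t)\|^2$ (which is $\le \tfrac{2}{\mu_2}b_t$ but \emph{not} directly comparable to $a_t$) propagates, and choose the Young parameter $\beta$ and the weight $\lambda$ so that the negative drift $-\mu_1\tau_1$ in $k_1$ is not overwhelmed by the positive $\lambda(1+\beta)(1-\mu_2\tau_2)(2\tau_1+l\tau_1^2)$ correction and so that $k_2<1$ holds once $\tau_1/\tau_2$ is taken small enough — this is precisely the constraint that forces $\tau_1\lesssim \mu_2^2\tau_2/l^2$ in the downstream theorems. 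The $\sigma^2$ terms just ride along additively and require no contraction argument.
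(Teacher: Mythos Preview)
Your proposal is correct and follows essentially the same route as the paper: the $L$-smooth descent lemma on $g$ for $a_{t+1}$, Lemma \ref{min linear} for the $y$-ascent contraction, the split of the intermediate gap $g(x_{t+1})-f(x_{t+1},y_t)$ into $b_t$ plus $x$-step corrections, Young's inequality with parameter $\beta$ applied to $\|\nabla_x f(x_t,y_t)\|^2$, and finally the quadratic-growth bound $\|y_t-y^*(x_t)\|^2\le \tfrac{2}{\mu_2}b_t$ together with the PL inequality for $g$ to convert gradient norms into $a_t$ and $b_t$. One small point of order: in the paper the PL contraction is applied \emph{first}, yielding $b_{t+1}\le(1-\mu_2\tau_2)\,\mathbb{E}[g(x_{t+1})-f(x_{t+1},y_t)]+\tfrac{l}{2}\tau_2^2\sigma^2$, and only then is the bracketed gap decomposed into $b_t + [f(x_t,y_t)-f(x_{t+1},y_t)] + [g(x_{t+1})-g(x_t)]$ --- this is what puts the factor $(1-\mu_2\tau_2)$ in front of \emph{all} the $x$-step correction terms (exactly the structure you identify in your ``collecting'' sentence), so make sure your four-way split is executed in that order rather than treating the fourth bracket in isolation.
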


\begin{proof}
Because $g$ is $L$-smooth by Lemma \ref{g smooth}, we have 
\begin{align*}
    g(x_{t+1})-g^* \leq& g(x_t)-g^*+\langle \nabla g(x_t), x_{t+1}-x_t \rangle +\frac{L}{2}\Vert x_{t+1}-x_t\Vert^2\\
     =& g(x_t)-g^* - \tau_1\langle \nabla g(x_t), G_x(x_t, y_t,\xi_{t1}) \rangle +\frac{L}{2}\tau_1^2\Vert G_x(x_t, y_t,\xi_{t1}) \Vert^2.
\end{align*}
Taking expectation of both side and use Assumption \ref{stochastic gradients}, we get
\begin{align}  \nonumber
    \mathbb{E}[ g(x_{t+1})-g^*] \leq &\mathbb{E}[g(x_t)-g^*] - \tau_1\mathbb{E}[\langle \nabla g(x_t), \nabla_xf(x_t,y_t) \rangle] + \frac{L}{2}\tau_1^2\mathbb{E}[\Vert G_x(x_t, y_t,\xi_{t1}) \Vert^2] \\ \nonumber
    \leq& \mathbb{E}[g(x_t)-g^*] - \tau_1\mathbb{E}[\langle \nabla g(x_t), \nabla_xf(x_t,y_t) \rangle] +\frac{L}{2}\tau_1^2\mathbb{E}[\Vert \nabla_xf(x_t, y_t) \Vert^2] + \frac{L}{2}\tau_1^2\sigma^2 \\ \nonumber
     \leq& \mathbb{E}[g(x_t)-g^*] - \tau_1\mathbb{E}[\langle \nabla g(x_t), \nabla_xf(x_t,y_t) \rangle] +\frac{\tau_1}{2}\mathbb{E}[\Vert \nabla_xf(x_t, y_t) \Vert^2] + \frac{L}{2}\tau_1^2\sigma^2\\ \label{x contraction}
     \leq& \mathbb{E}[g(x_t)-g^*] - \frac{\tau_1}{2}\mathbb{E}\Vert \nabla g(x_t) \Vert^2 + \frac{\tau_1}{2}\mathbb{E}\Vert \nabla_x f(x_t, y_t) - \nabla g(x_t)\Vert^2 +\frac{L}{2}\tau_1^2\sigma^2,
\end{align}
where in the second inequality we use Assumption \ref{stochastic gradients}, and in the third inequality we use $\tau_1\leq 1/L$. Because $-f(x_{t+1}, y)$ is $l$-smooth and $\mu_1$-PL, by Lemma \ref{min linear}, when $\tau_1\leq 1/l$ we have 
\begin{align} \nonumber
    \mathbb{E}[g(x_{t+1}) - f(x_{t+1}, y_{t+1})] &\leq (1-\mu_2\tau_2)\mathbb{E}[g(x_{t+1}) - f(x_{t+1}, y_t)] + \frac{l}{2}\tau_2^2\sigma^2\\  \label{y conctraction}
    &\leq (1-\mu_2\tau_2)\mathbb{E}[g(x_t) - f(x_t, y_t) +f(x_t, y_t) - f(x_{t+1}, y_t) +g(x_{t+1}) -g(x_t)] + \frac{l}{2}\tau_2^2\sigma^2
\end{align}
Because of lipschitz continuity of the gradient, we can bound $f(x_t,y_t) - f(x_{t+1},y_t)$ as
\begin{align*}
    f(x_t,y_t) - f(x_{t+1},y_t)& \leq -\langle \nabla_x f(x_t,y_t), x_{t+1}-x_t\rangle + \frac{l}{2}\Vert x_{t+1} - x_t \Vert^2 \\
    &\leq   \tau_1\langle \nabla_x f(x_t,y_t), G_x(x_t, y_t,\xi_{t1}) \rangle +\frac{l}{2}\tau_1^2\Vert G_x(x_t, y_t,\xi_{t1}) \Vert^2.
\end{align*}
Taking expectation of both side and use Assumption \ref{stochastic gradients},
\begin{equation} \label{x step bound}
    \mathbb{E}[f(x_t, y_t) - f(x_{t+1}, y_t)] \leq (\tau_1 +\frac{l}{2}\tau_1^2)\mathbb{E}\Vert \nabla_x f(x_t, y_t)\Vert^2 + \frac{l}{2}\tau_1^2\sigma^2.
\end{equation}
Also from (\ref{x contraction}) , 
\begin{equation} \label{x step bound 2}
    \mathbb{E}[g(x_{t+1}) - g(x_t)] \leq - \frac{\tau_1}{2}\mathbb{E}\Vert \nabla g(x_t) \Vert^2 + \frac{\tau_1}{2}\mathbb{E}\Vert \nabla_x f(x_t, y_t) - \nabla g(x_t)\Vert^2 +\frac{L}{2}\tau_1^2\sigma^2.
\end{equation}
Combining (\ref{y conctraction}), (\ref{x step bound}) and (\ref{x step bound 2}),
\begin{align} \nonumber
    \mathbb{E}[g(x_{t+1}) - f(x_{t+1}, y_{t+1})] \leq& (1-\mu_2\tau_2)\mathbb{E}[g(x_{t}) - f(x_{t}, y_t)] + (1-\mu_2\tau_2)(\tau_1+\frac{l}{2}\tau_1^2)\mathbb{E}\Vert \nabla_x f(x_t, y_t)\Vert^2 - \\ \nonumber
   & (1-\mu_2\tau_2)\frac{\tau_1}{2}\mathbb{E}\Vert \nabla g(x_t)\Vert^2 +(1-\mu_2\tau_2)\frac{\tau_1}{2}\mathbb{E}\Vert \nabla_x f(x_t, y_t) - \nabla g(x_t)\Vert^2 + \\ \label{y contraction 2}
   &(1-\mu_2\tau_2)\frac{L+l}{2}\tau_1^2\sigma^2 +\frac{l}{2}\tau_2^2\sigma^2.
\end{align}
Combining (\ref{x contraction}) and (\ref{y contraction 2}), we have for $\forall \lambda>0$
\begin{align} \nonumber
    a_{t+1} + \lambda b_{t+1} \leq& a_t - \left[\frac{\tau_1}{2} + \lambda (1-\mu_2\tau_1)\frac{\tau_1}{2} \right] \mathbb{E}\Vert \nabla g(x_t)\Vert^2 + \lambda (1-\mu_2\tau_2)b_t +\\ \nonumber
    & \left[\frac{\tau_1}{2} + \lambda(1-\mu_2\tau_2)\frac{\tau_1}{2} \right]\mathbb{E}\Vert \nabla_x f(x_t, y_t) - \nabla g(x_t)\Vert^2  
     + \lambda(1-\mu_2\tau_2)\left(\tau_1 +\frac{l}{2}\tau_1^2  \right)\mathbb{E}\Vert \nabla_x f(x_t, y_t)\Vert^2 +\\ \nonumber
     & \lambda(1-\mu_2\tau_2)\frac{L+l}{2}\tau_1^2\sigma^2 + \frac{l}{2}\lambda\tau_2^2\sigma^2 + \frac{L}{2}\tau_1^2\sigma^2 \\ \nonumber
     \leq& a_t - \left[\frac{\tau_1}{2} + \lambda (1-\mu_2\tau_1)\frac{\tau_1}{2} -\lambda(1+\beta)(1-\mu_2\tau_2)\left(\tau_1 + \frac{l}{2}\tau_1^2\right) \right] \mathbb{E} \Vert \nabla g(x_t)\Vert^2 +\\ \nonumber
     & \lambda (1-\mu_2\tau_2)b_t + \left[\frac{\tau_1}{2} + \lambda(1-\mu_2\tau_2)\frac{\tau_1}{2} + \lambda \left(1+\frac{1}{\beta}\right)(1-\mu_2\tau_2)\left(\tau_1 +\frac{l}{2}\tau_1^2\right) \right]\mathbb{E}\Vert \nabla_x f(x_t, y_t) - \nabla g(x_t)\Vert^2+ \\ \label{a+b}
     & \lambda(1-\mu_2\tau_2)\frac{L+l}{2}\tau_1^2\sigma^2 + \frac{l}{2}\lambda\tau_2^2\sigma^2 + \frac{L}{2}\tau_1^2\sigma^2,
\end{align}
where in the second inequality we use Young's Inequality and $\beta>0$. Now it suffices to bound $\Vert g(x_t)\Vert^2$ and $\Vert \nabla_xf(x_t, y_t)-\nabla g(x_t)\Vert^2$ by $a_t$ and $b_t$.  With Lemma \ref{g smooth}, we have:
\begin{equation}
    \Vert \nabla_x f(x_t, y_t)-\nabla g(x_t)\Vert^2 = \Vert \nabla_x f(x_t, y_t)-\nabla_x f(x_t, y^*(x_t))\Vert^2 \leq l^2\Vert y^*(x_t) - y_t \Vert^2, \label{A6.2}
\end{equation}
for any $y^*(x_t) \in \arg\max_yf(x_t,y)$. Now we fix $y^*(x_t)$ to be the projection of $y_t$ on the the set $\arg\max_yf(x_t,y)$. Because $-f(\bm{x_t},\cdot)$ satisfies PL condition with $\mu_2$, and Lemma \ref{PL to EB QG} therefore indicates it also satisfies quadratic growth condition with $\mu_2$, i.e.
\begin{equation}
    \Vert y^*(x_t) - y_t \Vert^2 \leq \frac{2}{\mu_2}[g(x_t)-f(x_t, y_t)],
\end{equation}
along with (\ref{A6.2}), we get 
\begin{equation}
    \Vert \nabla_x f(x_t, y_t)-\nabla g(x_t)\Vert^2 \leq \frac{2l^2}{\mu_2}[g(x_t)-f(x_t, y_t)]. \label{gradient different bound}
\end{equation}
Because $g$ satisfies PL condition with $\mu_1$ by Lemma \ref{g PL}, 
\begin{equation} \label{gradient g bound}
    \Vert \nabla g(x_t) \Vert^2 \geq 2\mu_1[g(x_t) - g^*].
\end{equation}
Plug (\ref{gradient different bound}) and (\ref{gradient g bound}) into (\ref{a+b}), we can get
\begin{align}   \nonumber
    a_{t+1} + \lambda b_{t+1} \leq& \Big\{1-\mu_1\big[\tau_1 + \lambda(1-\mu_2\tau_2)\tau_1 - \lambda(1+\beta)(1-\mu_2\tau_2)(2\tau_1 + l\tau_1^2)\big]  \Big\}a_t + \\ \nonumber
    &\lambda \Big\{1-\mu_2\tau_2 + \frac{l^2\tau_1}{\mu_2\lambda} + (1-\mu_2\tau_2)\frac{l^2}{\mu_2}\tau_1 + (1+\frac{1}{\beta})(1-\mu_2\tau_2)\frac{l^2}{\mu_2}(2\tau_1 + l\tau_1^2) \Big\}b_t + \\ \label{contraction}
    & \lambda(1-\mu_2\tau_2)\frac{L+l}{2}\tau_1^2\sigma^2 + \frac{l}{2}\lambda\tau_2^2\sigma^2 + \frac{L}{2}\tau_1^2\sigma^2.
\end{align}
\end{proof}

\textbf{Proof of Theorem \ref{main}}
\begin{proof} 
In the setting of Theorem 1, $\tau_1^t = \tau_1$ and $\tau_2^t = \tau_2, \forall t$. By Thoerem \ref{contraction theorem}, We only need to choose $\tau_1$, $\tau_2$, $\lambda$ and $\beta$ to let $k_1, k_2 <1$. Here we first choose $\beta = 1$ and $\lambda = 1/10$.  Then
\begin{align} \nonumber
    k_1 =& 1-\mu_1\big[\tau_1 + \lambda(1-\mu_2\tau_2)\tau_1 - \lambda(1+\beta)(1-\mu_2\tau_2)(2\tau_1 + l\tau_1^2)\big] \\
    \leq & 1-\mu_1\big\{\tau_1 - \lambda(1-\mu_2\tau_2)\tau_1[(1+\beta)(2+l\tau_1)-1] \big\} \leq 1 - \frac{1}{2}\tau_1\mu_1,
\end{align}
where in the last inequality we just plug in $\beta$ and $\lambda$ and use $l\tau_1 \leq 1$. Also,
\begin{align} \nonumber
    k_2 =& 1-\mu_2\tau_2 + \frac{l^2\tau_1}{\mu_2\lambda} + (1-\mu_2\tau_2)\frac{l^2}{\mu_2}\tau_1 + (1+\frac{1}{\beta})(1-\mu_2\tau_2)\frac{l^2}{\mu_2}(2\tau_1 + l\tau_1^2) \\ \nonumber
    \leq & 1 - \frac{l^2\tau_1}{\mu_2}\left\{\frac{\mu_2^2\tau_2}{\tau_1l^2} -\frac{1}{\lambda} - (1-\mu_2\tau_2)\left[1+\left(1+\frac{1}{\beta}\right)(2+l\tau_1)  \right]   \right\} \\
    \leq & 1 - \frac{l^2\tau_1}{\mu_2},
\end{align}
where in the last inequality we plug in $\beta$ and $\lambda$ and we use $\frac{\mu_2^2\tau_2}{\tau_1l^2} \leq 18$ by our choice of $\tau_1$. Note that $\frac{1}{2}\tau_1\mu_1 < \frac{l^2\tau_1}{\mu_2}$, because $\left(\frac{1}{2}\tau_1\mu_1\right) /\left( \frac{l^2\tau_1}{\mu_2}\right) = \frac{\mu_1\mu_2}{2l^2} < 1$. Define $P_t : = a_t + \frac{1}{10}b_t$, and by Theorem \ref{contraction theorem}, 
\begin{equation*}
    P_{t+1} \leq \left(1-\frac{1}{2}\tau_1\mu_1 \right)P_t + \frac{(1-\mu_2\tau_2)(L+l)\tau_1^2}{20}\sigma^2 + \frac{l\tau_2^2}{20}\sigma^2 + \frac{L\tau_1^2}{2}\sigma^2.
\end{equation*}
With some simple computation, 
\begin{equation*}
    P_t \leq (1-\frac{1}{2}\mu_1\tau_1)^tP_0 + \frac{(1-\mu_2\tau_2)(L+l)\tau_1^2+l\tau_2^2+10L\tau_1^2}{10\mu_1\tau_1}\sigma^2.
\end{equation*}
We verify that $\tau_1\leq 1/L$ by noting: $\tau_1 \leq \frac{\mu_2^2\tau_2}{18l^2} \leq \frac{\mu_2^2}{18l^3}\leq \frac{\mu_2}{2l^2} $ and $L = l+\frac{l^2}{\mu_2}\leq \frac{2l^2}{\mu_2}$.
\end{proof}

\textbf{Proof of Theorem \ref{main deterministic}}
\begin{proof}
The first part of Theorem \ref{main deterministic} is a direct corollary of Theorem \ref{main} by setting $\sigma = 0$. We show the second part by noting that 
\begin{equation}
    \left\|x_{t+1}-x_{t}\right\|^{2}=\tau_{1}^{2}\left\|\nabla_{x} f\left(x_{t}, y_{t}\right)\right\|^{2}, \text{and} \left\|y_{t+1}-y_{t}\right\|^{2}=\tau_{2}^{2}\left\|\nabla_{y} f\left(x_{t+1}, y_{t}\right)\right\|^{2}.
\end{equation}
Also,
\begin{align} \nonumber
    \Vert \nabla_y f(x_{t+1}, y_t)\Vert^2 \leq& \Vert\nabla_y f(x_{t}, y_t)\Vert^2 + \Vert\nabla_y f(x_{t+1}, y_t)-\nabla_y f(x_{t}, y_t)\Vert^2 \\ \nonumber
    \leq & \Vert \nabla_y f(x_t, y_t)-\nabla_yf(x_t, y^*(x_t))\Vert^2 + l^2\Vert x_{t+1} - x_t\Vert^2\\ \nonumber
    \leq & l^2\Vert y_t - y^*(x_t)\Vert^2 + l^2\Vert x_{t+1} - x_t\Vert^2\\ \label{point x bound}
    \leq& \frac{2l^2}{\mu_2}b_t + l^2\Vert x_{t+1} - x_t\Vert^2 = \frac{2l^2}{\mu_2}b_t + l^2\tau_1^2\Vert \nabla_x f(x_t, y_t)\Vert^2,
\end{align}
where in the second inequality $y^*(x_t)$ is the projection of $y_t$ on the the set $\arg\max_yf(x_t,y)$ and $\nabla_y f(x_t, y^*(x_t)) = 0$, in the third inequality we use lipschtiz continuity of gradient, and in the last inequality we use quadratic growth condition. Also,
\begin{align} \nonumber
    \Vert \nabla_x f(x_t, y_t)\Vert^2 \leq& \Vert \nabla g(x_t)\Vert^2 + \Vert \nabla_x f(x_t, y_t)-\nabla g(x_t)\Vert^2 \\ \nonumber
    = & \Vert \nabla g(x_t) - \nabla g(x^*) \Vert^2 + \Vert \nabla_x f(x_t, y_t)-\nabla g(x_t)\Vert^2 \\ \nonumber
    \leq & L^2\Vert x_t - x^*\Vert^2 + l^2 \Vert y^*(x_t) - y_t \Vert^2 \\ \label{point y bound}
    \leq& \frac{2L^2}{\mu_1}a_t + \frac{2l^2}{\mu_2}b_t,
\end{align}
where in the first equality $x^*$ is the projection of $x_t$ on the set $\arg\min_xg(x)$ and $\nabla g(x^*) = 0$, in the second inequality $y^*(x_t)$ is the projection of $y_t$ on the the set $\arg\max_yf(x_t,y)$ and $\nabla g(x_t) = \nabla_x f(x_t, y_t)$, and in the last inequality we use quadratic growth condition. Therefore with (\ref{point x bound}) and (\ref{point y bound}),
\begin{align} \nonumber
    \left\|x_{t}-x^{*}\right\|^{2}+\left\|y_{t}-y^{*}\right\|^{2} \leq& \tau_{1}^{2}\left\|\nabla_{x} f\left(x_{t}, y_{t}\right)\right\|^{2} + \tau_{2}^{2}\left\|\nabla_{y} f\left(x_{t+1}, y_{t}\right)\right\|^{2}\\ \nonumber
    \leq& (1 + \tau_2^2l^2)\tau_1^2\Vert \nabla_xf(x_t, y_t)\Vert^2 + \frac{2l^2}{\mu_2}\tau_2^2b_t\\ \nonumber
    \leq& \frac{2(1 + \tau_2^2l^2)\tau_1^2L^2}{\mu_1}a_t+ \frac{2(1 + \tau_2^2l^2)\tau_1^2l^2+2l^2\tau_2^2}{\mu_2}b_t\\ \nonumber
    \leq & \left[\frac{2(1 + \tau_2^2l^2)\tau_1^2L^2}{\mu_1}+ \frac{20(1 + \tau_2^2l^2)\tau_1^2l^2+20l^2\tau_2^2}{\mu_2}\right]P_0c^t,
\end{align}
where $c = 1-\frac{\mu_1\mu_2^2}{36l^3}$. Letting $\alpha_1 = \left[\frac{2(1 + \tau_2^2l^2)\tau_1^2L^2}{\mu_1}+ \frac{20(1 + \tau_2^2l^2)\tau_1^2l^2+20l^2\tau_2^2}{\mu_2}\right]P_0$,  we have 
\begin{equation*}
    \left\|x_{t+1}-x_{t}\right\|+\left\|y_{t+1}-y_{t}\right\| \leq \sqrt{2 \alpha_{1}} c^{t / 2}.
\end{equation*}
For $n\geq t$, 
\begin{equation*}
    \left\|x_{n}-x_{t}\right\|+\left\|y_{n}-y_{t}\right\| \leq \sum_{i=t}^{n-1}\left\|x_{i+1}-x_{i}\right\|+\left\|y_{i+1}-y_{i}\right\| \leq \sqrt{2 \alpha_{1}} \sum_{i=t}^{\infty} c^{i / 2} \leq \frac{\sqrt{2 \alpha_{1}} c^{t / 2}}{1-\sqrt{c}},
\end{equation*}
so $\{(x_t, y_t)\}_t$ converges and by first part of this theorem the limit $(x^*, y^*)$ must be a saddle point. Thus we have
\begin{equation*}
    \left\|x_{t}-x^{*}\right\|^{2}+\left\|y_{t}-y^{*}\right\|^{2} \leq \frac{2 \alpha_{1}}{(1-\sqrt{c})^{2}} c^{t} = \alpha c^{t}P_0,
\end{equation*}
with $\alpha = 2\left[\frac{2(1 + \tau_2^2l^2)\tau_1^2L^2}{\mu_1}+ \frac{20(1 + \tau_2^2l^2)\tau_1^2l^2+20l^2\tau_2^2}{\mu_2}\right]/(1-\sqrt{c})^{2}$.
\end{proof}

\textbf{Proof of Theorem \ref{main diminishing}}
\begin{proof}
 First note that since $\tau_1^t \leq \mu_2^2/18l^2$, $\tau_2^t = \frac{18 l^{2} \beta}{\mu_{2}^{2}(\gamma+t)} = \frac{18l^2\tau_1^t}{\mu_2^2} \leq \frac{1}{l}$. Similar to the proof of Theorem \ref{main}, by choosing $\beta = 1$ and $\lambda = 1/10$ in the Theorem \ref{contraction theorem}, we have $\min\{k_1, k_2\} = \frac{1}{2}\mu_1\tau_1^t$. We prove the theorem by induction. When t = 1, it is naturally satisfied by definition of $\nu$. We assume that $P_t \leq \frac{\nu}{\gamma+t}$. Then by Theorem \ref{contraction theorem}, 
\begin{align} \nonumber
    P_{t+1} \leq& \left(1-\frac{1}{2}\mu_1\tau_1\right)P_t + \lambda(1-\mu_2\tau_2^t)\frac{L+l}{2}(\tau_1^t)^2\sigma^2 + \frac{l}{2}\lambda(\tau_2^t)^2\sigma^2 + \frac{L}{2}(\tau_1^t)^2\sigma^2 \\ \nonumber
    \leq& \frac{\gamma+t-\frac{1}{2}\mu_1\beta}{\gamma+t}\frac{\nu}{\gamma+t}+ \left[\frac{(L+l)\beta^2}{20(\gamma+t)^2} + \frac{18^2l^5\beta^2}{20\mu_2^4(\gamma+t)^2}+\frac{L\beta^2}{2(\gamma+t)^2}  \right]\sigma^2 \\ \label{nu ineq}
    \leq& \frac{\gamma+t-1}{(\gamma+t)^2}\nu - \frac{\frac{1}{2}\mu_1\beta - 1}{(\gamma+t)^2}\nu + \left[\frac{(L+l)\beta^2}{20(\gamma+t)^2} + \frac{18^2l^5\beta^2}{20\mu_2^4(\gamma+t)^2}+\frac{L\beta^2}{2(\gamma+t)^2}  \right]\sigma^2 \\ \nonumber
    \leq& \frac{\nu}{\gamma+t+1},
\end{align}
where in the second inequality we plug in $\tau_1^t$ and $\tau_2^t$, in the last inequality we use $(\gamma+t+1)(\gamma+t-1)\leq (\gamma+t)^2$ and the fact that sum of last two terms in (\ref{nu ineq}) is no greater than 0 by our choice of $\nu$. 
\end{proof}

\section{Proofs for Section \ref{Sec4}}
\label{appendix3}

\textbf{Proof of Theorem \ref{svrg thm 2}}
\begin{proof} 
Because the proof is long, we break the proof into three parts for the convenience of understanding the intuition behind it. 

\textbf{Part 1}.

 Consider in one outer loop $k$. Define $a_{t,j} = \mathbb{E}[g(x_{t,j})-g^*]$,  $b_{t,j} = \mathbb{E}[g(x_{t,j}) - f(x_{t,j}, y_{t,j})]$, $\Tilde{a}_t = \mathbb{E}[g(\Tilde{x}_t)-g^*]$ and  $\Tilde{b}_t = \mathbb{E}[g(\Tilde{x}_t) - f(\Tilde{x}_t, \Tilde{y}_t)]$. We omit the subscript $t$ for now.  We denote the stochastic gradients as
 \begin{align*}
     G_x(x_j, y_j) &= \nabla_x f_{i_j}(x_{j},y_{j}) - \nabla_x f_{i_j}(\Tilde{x}, \Tilde{y}) +\nabla_x f(\Tilde{x}, \Tilde{y}),\\
     G_y(x_j, y_{j+1})& = \nabla_y f_{i_j}(x_{j+1}, y_{j}) - \nabla_y f_{i_j}(\Tilde{x}, \Tilde{y}) +\nabla_y f(\Tilde{x}, \Tilde{y}).
 \end{align*}
 
 Note that these are unbiased stochastic gradients. Similar to the proof of Theorem \ref{contraction theorem} (replace $\sigma^2$ in (\ref{x contraction}) ), with $\tau_1 \leq 1/L$, we have 
 \begin{equation} \label{vr a contraction}
     a_{j+1} \leq a_j - \frac{\tau_1}{2}\mathbb{E}\Vert \nabla g(x_j) \Vert^2 + \frac{\tau_1}{2}\mathbb{E}\Vert \nabla_x f(x_j, y_j) - \nabla g(x_j)\Vert^2 +\frac{L}{2}\tau_1^2\mathbb{E}\|G_x(x_j, y_j)- \nabla_xf(x_j, y_j)\|^2
 \end{equation}
By Lemma \ref{min linear}, with $\tau_2 \leq 1/l$,
\begin{equation} \label{vr b contraction}
    b_{j+1} \leq \mathbb{E}[g(x_{j+1}) - f(x_{j+1}, y_j)] - \frac{\tau_2}{2}\mathbb{E}\|\nabla_y f(x_{j+1}, y_j)\|^2 +\frac{l}{2}\tau_2^2\mathbb{E}\|G_y(x_{j+1}, y_j) - \nabla_y f(x_{j+1}, y_j)\|^2
\end{equation}

 Furthermore, we bound the distance to the $\Tilde{x} = x_0$ as 
 \begin{align} \nonumber
     \mathbb{E}\| x_{j+1} - \Tilde{x}\|^2 &= \mathbb{E}\|x_j - \tau_1G_x(x_j, y_j)-\Tilde{x}  \|^2 \\  \nonumber
     & = \mathbb{E}\|x_j-\Tilde{x}\|^2 + 2\mathbb{E}\langle x_j - \Tilde{x}, \tau_1\nabla_xf(x_j, y_j)\rangle + \tau_1^2\mathbb{E}\|\nabla_x f(x_j, y_j) \|^2 + \tau_1^2\mathbb{E}\|G_x(x_j, y_j) - \nabla_x f(x_j, y_j) \|^2 \\ \label{dist contraction 1}
     & \leq (1+\tau_1\beta_1)\mathbb{E}\|x_j -\Tilde{x}\|^2 + \left(\tau_1^2+\frac{\tau_1}{\beta_1}\right)\mathbb{E}\|\nabla_xf(x_j, y_j)\|^2 + \tau_1^2 \mathbb{E}\|G_x(x_j, y_j)-\nabla_xf(x_j, y_j)\|^2,
 \end{align}
 where in the last inequality we use Young's inequality to the inner product and $\beta_1 >0$ is a constant which we will determine later.  Similarly, 
 \begin{equation} \label{dist contraction 2}
     \mathbb{E}\|y_{j+1} - \Tilde{y}\|^2 \leq (1+\tau_2\beta_2)\mathbb{E}\|y_j -\Tilde{y}\|^2 + \left(\tau_2^2+\frac{\tau_2}{\beta_2}\right)\mathbb{E}\| \nabla_yf(x_{j+1}, y_j)\|^2 + \tau_2^2\mathbb{E}\| G_y(x_{j+1}, y_j) - \nabla_y f(x_{j+1}, y_j)\|^2
 \end{equation}
 where in the last inequality we use Young's inequality to the inner product and $\beta_2 >0$ is a constant. We are going to construct a potential function 
 \begin{equation}
     R_j = a_j +\lambda b_j + c_j\|x_j - \Tilde{x}\|^2 +  d_j \|y_j - \Tilde{y}\|^2,
 \end{equation}
 and we will determine $\lambda, c_j$ and $d_j$ later. Combine (\ref{vr a contraction}), (\ref{vr b contraction}) and (\ref{dist contraction 2}), 
 \begin{align} \nonumber
     R_{j+1} \leq & a_j - \frac{\tau_1}{2}\mathbb{E}\Vert \nabla g(x_j) \Vert^2 + \frac{\tau_1}{2}\mathbb{E}\Vert \nabla_x f(x_j, y_j) - \nabla g(x_j)\Vert^2 +\frac{L}{2}\tau_1^2\mathbb{E}\|G_x(x_j, y_j)- \nabla_xf(x_j, y_j)\|^2 + \\ \nonumber
     & \lambda\mathbb{E}[g(x_{j+1}) - f(x_{j+1}, y_j)] - \frac{\lambda\tau_2}{2}\mathbb{E}\|\nabla_y f(x_{j+1}, y_j)\|^2 + \\ \nonumber
     & c _{j+1}\mathbb{E}\|x_{j+1} - \Tilde{x}\|^2 + \left(d_{j+1}+\frac{\lambda l}{2}   \right)\tau_2^2\mathbb{E}\| G_y(x_{j+1}, y_j) - \nabla_y f(x_{j+1}, y_j)\|^2 +\\ \label{vr R1}
     & d_{j+1}(1+\tau_2\beta_2)\mathbb{E}\|y_j -\Tilde{y}\|^2 + d_{j+1}\left(\tau_2^2+\frac{\tau_2}{\beta_2}\right)\mathbb{E}\| \nabla_yf(x_{j+1}, y_j)\|^2 
 \end{align}
 Then we bound the variance of the stochastic gradients,
 \begin{align} \nonumber
   \mathbb{E} \Vert G_y(x_{j+1}, y_j) - \nabla_y f(x_{j+1}, y_j)\Vert^2 &= \mathbb{E}\Vert\nabla_y f_{i_j}(x_{j+1}, y_{j}) - \nabla_y f_{i_j}(\Tilde{x}, \Tilde{y}) +\nabla_y f(\Tilde{x}, \Tilde{y}) - \nabla_y f(x_{j+1}, y_j)\Vert^2\\  \label{vr variance bdn1}
    &\leq \mathbb{E}\Vert\nabla_y f_{i_j}(x_{j+1}, y_{j}) - \nabla_y f_{i_j}(\Tilde{x}, \Tilde{y})\Vert^2 \leq l^2\mathbb{E}\| x_{j+1} - \Tilde{x}\|^2 + l^2 \mathbb{E}\|y_j  - \Tilde{y}\|^2
\end{align}
where in the first inequality we use $\mathbb{E}[\nabla_y f_{i_j}(x_{j+1}, y_{j}) - \nabla_y f_{i_j}(\Tilde{x}, \Tilde{y})] = \nabla_y f(x_{j+1}, y_j)- \nabla_y f(\Tilde{x}, \Tilde{y}) $. Similarly, 
\begin{equation} \label{vr variance bdn2}
    \mathbb{E}\| G_x(x_j, y_j) - \nabla_xf(x_j, y_j)\|^2 \leq l^2 \mathbb{E}\|x_j -\Tilde{x}\|^2 + l^2\mathbb{E}\|y_j - \Tilde{y}\|^2. 
\end{equation}
 Plugging (\ref{vr variance bdn1}) into (\ref{vr R1}), 
 \begin{align} \nonumber
     R_{j+1} \leq & a_j - \frac{\tau_1}{2}\mathbb{E}\Vert \nabla g(x_j) \Vert^2 + \frac{\tau_1}{2}\mathbb{E}\Vert \nabla_x f(x_j, y_j) - \nabla g(x_j)\Vert^2 +\frac{L}{2}\tau_1^2\mathbb{E}\|G_x(x_j, y_j)- \nabla_xf(x_j, y_j)\|^2 + \\ \nonumber
     & \lambda\mathbb{E}[g(x_{j+1}) - f(x_{j+1}, y_j)] - \frac{\lambda\tau_2}{2}\mathbb{E}\|\nabla_y f(x_{j+1}, y_j)\|^2 + \\ \nonumber
     & \left[ c _{j+1} + \left(d_{j+1} + \frac{\lambda l}{2} \right)l^2\tau_2^2\right]\mathbb{E}\|x_{j+1} - \Tilde{x}\|^2  +\\ 
     & \left[d_{j+1}(1+\tau_2\beta_2)+\left(d_{j+1} + \frac{\lambda l}{2} \right)l^2\tau_2^2\right]\mathbb{E}\|y_j -\Tilde{y}\|^2 + d_{j+1}\left(\tau_2^2+\frac{\tau_2}{\beta_2}\right)\mathbb{E}\| \nabla_yf(x_{j+1}, y_j)\|^2.
 \end{align}
 Then we plug in (\ref{dist contraction 1}) and rearrange, 
  \begin{align} \nonumber
     R_{j+1} \leq & a_j - \frac{\tau_1}{2}\mathbb{E}\Vert \nabla g(x_j) \Vert^2 + \left[c_{j+1} + \left(d_{j+1} + \frac{\lambda l}{2}  \right)l^2\tau_2^2  \right]\left(\tau_1^2 + \frac{\tau_1}{\beta_1}  \right)\mathbb{E}\|\nabla_x f(x_j, y_j)\|^2 + \frac{\tau_1}{2}\mathbb{E}\|\nabla_xf(x_j, y_j) - \nabla g(x_j)\|^2 + \\ \nonumber
     & \lambda\mathbb{E}[g(x_{j+1}) - f(x_{j+1}, y_j)] -\left[ \frac{\lambda\tau_2}{2} - d_{j+1}\left(\tau_2^2+\frac{\tau_2}{\beta_2}\right)\right]\mathbb{E}\|\nabla_y f(x_{j+1}, y_j)\|^2 + \\ \nonumber
     & \left[ c _{j+1} + \left(d_{j+1} + \frac{\lambda l}{2} \right)l^2\tau_2^2\right](1+\tau_1\beta_1)\mathbb{E}\|x_{j} - \Tilde{x}\|^2 + \left[d_{j+1}(1+\tau_2\beta_2)+\left(d_{j+1} + \frac{\lambda l}{2} \right)l^2\tau_2^2\right]\mathbb{E}\|y_j -\Tilde{y}\|^2  + \\
     & \left[\frac{L}{2}+  c _{j+1} + \left(d_{j+1} + \frac{\lambda l}{2} \right)l^2\tau_2^2  \right]\tau_1^2\mathbb{E}\| G_x(x_j, y_j) - \nabla_xf(x_j, y_j)\|^2
 \end{align}
 Consider the second line. Using PL condition $\|\nabla_yf(x_{j+1}, y_j)\|^2 \geq 2\mu_2 [g(x_{j+1})-f(x_{j+1}, y_j)]$ and assuming $\lambda \geq d_{j+1}(\tau_2+1/\beta_2)$, which we will justify later by our choices of $d_{j+1}$ and $\beta_2$, we have 
 \begin{align*}
     \text{the second line} \leq& \lambda \left[1-\tau_2\mu_2 +\frac{\lambda}{2}d_{j+1}\left(\tau_2^2 + \frac{\tau_2}{\beta_2}  \right)\mu_2  \right]\mathbb{E}[g(x_{j+1}) - f(x_{j+1}, y_j)]\\
     \leq& \lambda \left[1-\tau_2\mu_2 +\frac{\lambda}{2}d_{j+1}\left(\tau_2^2 + \frac{\tau_2}{\beta_2}  \right)\mu_2  \right] \Big\{ b_j + \mathbb{E}\big(f(x_j, y_j) - f(x_{j+1}, y_j)\big) + (a_{j+1}-a_j)\Big\} \\
     \leq& \lambda \left[1-\tau_2\mu_2 +\frac{\lambda}{2}d_{j+1}\left(\tau_2^2 + \frac{\tau_2}{\beta_2}  \right)\mu_2  \right] \Big\{ b_j +\left(\tau_1 + \frac{l}{2}\tau_1^2 \right)\mathbb{E}\|\nabla_x f(x_j, y_j)\|^2 + \\
     &\frac{l}{2}\tau_1^2\mathbb{E}\|G_x(x_j, y_j) - \nabla_xf(x_j, y_j\|^2 - \frac{\tau_1}{2}\mathbb{E}\Vert \nabla g(x_j) \Vert^2 +  \\
     &\frac{\tau_1}{2}\mathbb{E}\Vert \nabla_x f(x_j, y_j) - \nabla g(x_j)\Vert^2 +\frac{L}{2}\tau_1^2\mathbb{E}\|G_x(x_j, y_j)- \nabla_xf(x_j, y_j)\|^2 \Big\}
 \end{align*}
 where in the last inequality we use (\ref{vr a contraction}) and (\ref{x step bound}). Now we plug this into $R_{j+1}$,
  \begin{align} \nonumber
     R_{j+1} \leq & a_j - \frac{\tau_1}{2}(1+\lambda\zeta)\mathbb{E}\Vert \nabla g(x_j) \Vert^2 + \bigg\{\left[c_{j+1} + \left(d_{j+1} + \frac{\lambda l}{2}  \right)l^2\tau_2^2  \right]\left(\tau_1^2 + \frac{\tau_1}{\beta_1}  \right) + \lambda \zeta \left(\tau_1+\frac{l}{2}\tau_1^2 \right)\bigg\}\mathbb{E}\|\nabla_x f(x_j, y_j)\|^2 +  \\ \nonumber
     & \frac{\tau_1}{2}(1+\lambda\zeta)\mathbb{E}\|\nabla_xf(x_j, y_j) - \nabla g(x_j)\|^2 + \lambda\zeta b_j + \\ \nonumber
     & \left[ c _{j+1} + \left(d_{j+1} + \frac{\lambda l}{2} \right)l^2\tau_2^2\right](1+\tau_1\beta_1)\mathbb{E}\|x_{j} - \Tilde{x}\|^2 + \left[d_{j+1}(1+\tau_2\beta_2)+\left(d_{j+1} + \frac{\lambda l}{2} \right)l^2\tau_2^2\right]\mathbb{E}\|y_j -\Tilde{y}\|^2  + \\
     & \left[\frac{L}{2}+  c _{j+1} + \left(d_{j+1} + \frac{\lambda l}{2} \right)l^2\tau_2^2  +\lambda\zeta\frac{L+l}{2}\right]\tau_1^2\mathbb{E}\| G_x(x_j, y_j) - \nabla_xf(x_j, y_j)\|^2,
 \end{align}
 where we define $\zeta = 1-\tau_2\mu_2 +\frac{\lambda}{2}d_{j+1}\left(\tau_2^2 + \frac{\tau_2}{\beta_2}  \right)\mu_2 $ and $\psi = 1-\zeta$. With $\|\nabla_xf(x_j, y_j)\|^2 \leq 2\|\nabla g(x_j)\|^2 + 2\|\nabla g(x_j) - \nabla_xf(x_j, y_j)\|^2$,
 
   \begin{align} \nonumber
     R_{j+1} \leq & a_j - \bigg\{\frac{\tau_1}{2}(1+\lambda\zeta) - 2\left[c_{j+1} + \left(d_{j+1} + \frac{\lambda l}{2}  \right)l^2\tau_2^2  \right]\left(\tau_1^2 + \frac{\tau_1}{\beta_1}  \right) - 2\lambda \zeta \left(\tau_1+\frac{l}{2}\tau_1^2 \right) \bigg\}\mathbb{E}\Vert \nabla g(x_j) \Vert^2  +  \\ \nonumber
     & \lambda\zeta b_j + \bigg\{\frac{\tau_1}{2}(1+\lambda\zeta) + 2\left[c_{j+1} + \left(d_{j+1} + \frac{\lambda l}{2}  \right)l^2\tau_2^2  \right]\left(\tau_1^2 + \frac{\tau_1}{\beta_1}  \right) - 2\lambda \zeta \left(\tau_1+\frac{l}{2}\tau_1^2 \right) \bigg\}\mathbb{E}\|\nabla_xf(x_j, y_j) - \nabla g(x_j)\|^2 + \\ \nonumber
     & \left[ c _{j+1} + \left(d_{j+1} + \frac{\lambda l}{2} \right)l^2\tau_2^2\right](1+\tau_1\beta_1)\mathbb{E}\|x_{j} - \Tilde{x}\|^2 + \left[d_{j+1}(1+\tau_2\beta_2)+\left(d_{j+1} + \frac{\lambda l}{2} \right)l^2\tau_2^2\right]\mathbb{E}\|y_j -\Tilde{y}\|^2  + \\
     & \left[\frac{L}{2}+  c _{j+1} + \left(d_{j+1} + \frac{\lambda l}{2} \right)l^2\tau_2^2  +\lambda\zeta\frac{L+l}{2}\right]\tau_1^2\mathbb{E}\| G_x(x_j, y_j) - \nabla_xf(x_j, y_j)\|^2.
 \end{align}
Then plugging in (\ref{gradient different bound}), (\ref{gradient g bound}) and (\ref{vr variance bdn2}), we get
   \begin{align} \nonumber
     R_{j+1} \leq & a_j - \bigg\{\tau_1(1+\lambda\zeta) - 4\left[c_{j+1} + \left(d_{j+1} + \frac{\lambda l}{2}  \right)l^2\tau_2^2  \right]\left(\tau_1^2 + \frac{\tau_1}{\beta_1}  \right) - 4\lambda \zeta \left(\tau_1+\frac{l}{2}\tau_1^2 \right) \bigg\}\mu_1 a_j  +  \\ \nonumber
     & \lambda b_j - \lambda\frac{1}{\lambda}\bigg\{\lambda\psi - \frac{l^2\tau_1}{\mu_2}(1+\lambda\zeta) - \frac{4l^2}{\mu_2}\left[c_{j+1} + \left(d_{j+1} + \frac{\lambda l}{2}  \right)l^2\tau_2^2  \right]\left(\tau_1^2 + \frac{\tau_1}{\beta_1}  \right) - \frac{4l^2}{\mu_2}\lambda \zeta \left(\tau_1+\frac{l}{2}\tau_1^2 \right) \bigg\}b_j + \\ \nonumber
     & \bigg\{\left[ c _{j+1} + \left(d_{j+1} + \frac{\lambda l}{2} \right)l^2\tau_2^2\right](1+\tau_1\beta_1)+\left[\frac{L}{2}+  c _{j+1} + \left(d_{j+1} + \frac{\lambda l}{2} \right)l^2\tau_2^2  +\lambda\zeta\frac{L+l}{2}\right]\tau_1^2l^2\bigg\}\mathbb{E}\|x_{j} - \Tilde{x}\|^2 + \\ \label{vr R_t}
     & \bigg\{\left[d_{j+1}(1+\tau_2\beta_2)+\left(d_{j+1} + \frac{\lambda l}{2} \right)l^2\tau_2^2\right] +\left[\frac{L}{2}+  c _{j+1} + \left(d_{j+1} + \frac{\lambda l}{2} \right)l^2\tau_2^2  +\lambda\zeta\frac{L+l}{2}\right]\tau_1^2l^2 \bigg\}\mathbb{E}\|y_j -\Tilde{y}\|^2.
 \end{align}
Now we are ready to define sequences $\{c_j\}_j$ and $\{d_j\}_j$. Let $c_N=d_N= 0$, and 
\begin{align*}
    c_j &= \left[ c _{j+1} + \left(d_{j+1} + \frac{\lambda l}{2} \right)l^2\tau_2^2\right](1+\tau_1\beta_1)+\left[\frac{L}{2}+  c _{j+1} + \left(d_{j+1} + \frac{\lambda l}{2} \right)l^2\tau_2^2  +\lambda\zeta\frac{L+l}{2}\right]\tau_1^2l^2, \\
    d_j &= \left[d_{j+1}(1+\tau_2\beta_2)+\left(d_{j+1} + \frac{\lambda l}{2} \right)l^2\tau_2^2\right] +\left[\frac{L}{2}+  c _{j+1} + \left(d_{j+1} + \frac{\lambda l}{2} \right)l^2\tau_2^2  +\lambda\zeta\frac{L+l}{2}\right]\tau_1^2l^2.
\end{align*}
We further define
\begin{align} \label{vr m1}
    m_j^1 :=& \tau_1(1+\lambda\zeta) - 4\left[c_{j+1} + \left(d_{j+1} + \frac{\lambda l}{2}  \right)l^2\tau_2^2  \right]\left(\tau_1^2 + \frac{\tau_1}{\beta_1}  \right) - 4\lambda \zeta \left(\tau_1+\frac{l}{2}\tau_1^2 \right), \\ \label{vr m2}
    m_j^2 :=& \frac{1}{\lambda}\bigg\{\lambda\psi - \frac{l^2\tau_1}{\mu_2}(1+\lambda\zeta) - \frac{4l^2}{\mu_2}\left[c_{j+1} + \left(d_{j+1} + \frac{\lambda l}{2}  \right)l^2\tau_2^2  \right]\left(\tau_1^2 + \frac{\tau_1}{\beta_1}  \right) - \frac{4l^2}{\mu_2}\lambda \zeta \left(\tau_1+\frac{l}{2}\tau_1^2 \right)\bigg\}.
\end{align}
Then we can write (\ref{vr R_t}) as 
\begin{equation}
    R_{j+1} \leq  R_j - m_j^1a_j - \lambda m_j^2b_j
\end{equation}
Now we bring back the subscript $t$. Summing the equation from $0$ to $N-1$, 
\begin{equation}
    \sum_{j=0}^{N-1} a_{t,j} + \lambda b_{t,j} \leq\frac{R_0 - R_N}{N \gamma} = \frac{a_{t,0} + \lambda b_{t,0} - a_{t,N} - \lambda b_{t,N}}{N\gamma} =  \frac{\Tilde{a}_t+\lambda\Tilde{b}_t - \Tilde{a}_{t+1} - \lambda\Tilde{b}_{t+1}}{N\gamma},
\end{equation}
where $\gamma := \min_j \{m_j^1, m_j^2\}$, and the first equality is due to $c_N = d_N=0$ and $(x_{t,0}, y_{t,0}) = (\Tilde{x}_t, \Tilde{y}_t)$. Summing $t$ from 0 to $T-1$, we get
\begin{equation}
    \frac{1}{NT}\sum_{t=0}^{T-1}\sum_{j=0}^{N-1}a_{t, j} +\lambda b_{t,j} \leq \frac{\Tilde{a}_0+\lambda \Tilde{b}_0}{NT\gamma} = \frac{a^k + \lambda b^k}{NT\gamma}.
\end{equation}
The left hand side is exactly $a^{k+1}+\lambda b^{k+1}$, because $(x_k, y_k)$ is sampled uniformly from $\{\{(x_{t,j}, y_{t,j})\}_{j=0}^{N-1}\}_{t = 0}^{T-1}$.

\textbf{Part 2}.

It suffices to choose proper $\tau_1$, $\tau_2$, $N$ and $T$ such that $NT\gamma>1$. Driven by the proof, we choose
\begin{equation*}
    \tau_1 = \frac{k_1}{\kappa^2 l}, \quad \beta_1 = k_2\kappa^2l, \quad \tau_2 = \frac{k_3}{l}, \quad \beta_2 = lk_4.
\end{equation*}
We will choose $k_1, k_2, k_3$ and $k_4$ later and we let $k_1, k_2, k_3, k_4 \leq 1$. Plug back to $c_j$ and $d_j$, we have
\begin{align} \nonumber
    c_j =& \left(1+k_1k_2+\frac{k_1^2}{\kappa^4}\right)c_{j+1} + \left[k_3^2(1+k_1k_2) + \frac{k_1^2k_3^2}{\kappa^4} +  (L+l)\frac{k_1^2}{\kappa^4}\left(\frac{k_3^2}{l^2} + \frac{k_3}{l^2k_4} \right)\mu_2  \right]d_{j+1} + \\ \nonumber
    &  \frac{\lambda}{2}lk_3^2(1+k_1k_2) + \frac{L}{2\kappa^4} k_1^2 + \frac{\lambda}{2\kappa^4}lk_1^2k_3^2 + \frac{\lambda}{2\kappa^4}(L+l)k_1^2(1-k_3k_4)\\ \label{vr c bound}
    \leq & \left(1+k_1k_2+\frac{k_1^2}{\kappa^4}\right)c_{j+1} + \left(3k_3^2  + 3\frac{1}{\kappa^3}k_1^2 \right)d_{j+1} +  2\lambda lk_3^2 + (1+2\lambda)\frac{l}{\kappa^3}k_1^2,
\end{align}
where in the last inequality we assume $k_3^2 + \frac{k_3}{k_4}\leq 1$.
\begin{align}  \nonumber
    d_j =& \frac{k_1^2}{\kappa^4}c_{j+1} + \left[ 1+k_3k_4 + k_3^2 + (L+l)\frac{k_1^2}{\kappa^4}\left( \frac{k_3^2}{l^2}+ \frac{k_3}{l^2k_4} \right)\mu_2 + \frac{1}{\kappa^4}k_1^2k_3^2\right]d_{j+1} + \\ \nonumber
    & \frac{\lambda}{2}lk_3^2 + \frac{L}{2\kappa^4}k_1^2 + \frac{\lambda}{2\kappa^4}lk_1^2k_3^2 + \frac{\lambda}{2\kappa^4}(L+l)k_1^2(1-k_3k_4)\\ \label{vr d bound}
    \leq & \frac{k_1^2}{\kappa^4}c_{j+1} + \left( 1+k_3k_4 + 2k_3^2 + \frac{3}{\kappa^3}k_1^2  \right)d_{j+1} + \lambda lk_3^2 + (1+2\lambda)\frac{l}{\kappa^3}k_1^2. 
\end{align}
We define $e_j = \max\{c_j, d_j\}$. Then combining (\ref{vr c bound}) and (\ref{vr d bound}), we easily get 
\begin{equation*}
    e_j \leq \left(1+k_1k_2+k_3k_4 + 3k_3^2 + \frac{4}{\kappa^3}k_1^2\right)e_{j+1} + 2\lambda lk_3^2 + (1+2\lambda)\frac{l}{\kappa^3}k_1^2.
\end{equation*}
As $e_N = 0$, we have
\begin{equation} \label{vr e bound}
    e_0 \leq \left[2\lambda lk_3^2 + (1+2\lambda)\frac{l}{\kappa^3}k_1^2\right] \frac{\left(1+k_1k_2+k_3k_4 + 3k_3^2 + \frac{4}{\kappa^3}k_1^2  \right)^N-1}{k_1k_2+k_3k_4 + 3k_3^2 + \frac{4}{\kappa^3}k_1^2},
\end{equation}
and note that $e_j > e_{j+1}$ so $e_j \leq e_0, \forall j$. Then we want to lower bound $\gamma$. Rearrange (\ref{vr m1}), 
\begin{align} \nonumber
    m_j^1 =& \mu_1\bigg\{\tau_1(1+\lambda -\lambda\tau_2\mu_2) - 2\lambda l^3\tau_2^2\left(\tau_1^2 +\frac{\tau_1}{\beta_1}\right) - 4\lambda\left(\tau_1 + \frac{l}{2}\tau_1^2\right)(1-\tau_2\mu_2) - \\ \nonumber
    &\left[-2\tau_1\left(\tau_2^2+\frac{\tau_2}{\beta_2}\right)\mu_2 + 4\left(\tau_1^2 + \frac{\tau_1}{\beta_1}\right)l^2\tau_2^2 + 8\left(\tau_1 + \frac{l}{2}\tau_1^2\right)\left(\tau_2^2 + \frac{\tau_2}{\beta_2}\right)\mu_2    \right]d_{j+1} - \\ \nonumber
    &4\left(\tau_1^2 + \frac{\tau_1}{\beta_1}\right)c_{j+1}\bigg\}\\
    \geq& \frac{1}{2}\tau_1\mu_1 - \left[\frac{4}{\kappa^4}k_3^2\left(k_1^2 +\frac{k_1}{k_2}\right) + \frac{10\mu_2}{\kappa^2l}k_1\left(k_3^2 + \frac{k_3}{k_4} \right) \right]\frac{\mu_1}{l^2}d_{j+1} - \frac{4}{\kappa^4}\left(k_1^2 + \frac{k_1}{k_2}\right) \frac{\mu_1}{l^2}c_{j+1},
\end{align}
where in the inequality, we use $\lambda = 1/20$ and assume that $\frac{1}{\kappa^2}k_3^2(k_1+\frac{1}{k_2})\leq 10$. Rearranging (\ref{vr m2}), 
\begin{align} \nonumber
    m_j^2 =& \tau_2\mu_2 - \frac{l^2\tau_1}{\mu_2} \left(\frac{1}{\lambda}+1 - \tau_2\mu_2  \right) - \frac{2l^5}{\mu_2}\left(\tau_1^2+\frac{\tau_1}{\beta_1}  \right)\tau_2^2 - \frac{4l^2}{\mu_2}\left( \tau_1+\frac{l}{2}\tau_1^2\right)(1-\tau_2\mu_2)-\\ \nonumber
    & \left[\frac{2}{\lambda}\left(\tau_2^2 + \frac{\tau_2}{\beta_2}\right)\mu_2 + \frac{2}{\lambda}l^2\tau_1\left(\tau_2^2 + \frac{\tau_2}{\beta_2}\right) + \frac{4}{\lambda}\frac{l^4}{\mu_2}\tau_2^2\left(\tau_1^2 + \frac{\tau_1}{\beta_1}\right) + \frac{8l^2}{\lambda\mu_2}\left(\tau_1+\frac{l}{2}\tau_1^2\right)\left(\tau_2^2 + \frac{\tau_2}{\beta_2}\right)\mu_2  \right]d_{j+1} - \\ \nonumber
    &\frac{4}{\lambda}\frac{l^2}{\mu_2}\left(\tau_1^2 + \frac{\tau_1}{\beta_1}\right)c_{j+1} \\
    \geq & \frac{l^2\tau_1}{2\min\{\mu_1, \mu_2\}} - \left[ 200\left(k_3^2 + \frac{k_3}{k_4}\right) + \frac{80}{\kappa^2}\left(k_1^2 + \frac{k_1}{k_2}\right) \right]\frac{\mu_2}{l^2}d_{j+1} - \frac{80}{\kappa^2}\left(k_1^2 + \frac{k_1}{k_2}\right)\frac{\mu_2}{l^2}c_{j+1},
\end{align}
where in the inequality we use $\lambda = 1/20$ and assume $k_1\leq k_3/28$ and $\frac{1}{\kappa^2}k_3^2\left(k_1+ \frac{1}{k_2}\right)\leq 1/4$. Note that $\frac{1}{2}\tau_1\mu_1 = \frac{\mu_1}{2\kappa^2l}k_1$ and $\frac{l^2\tau_1}{2\min\{\mu_1, \mu_2\}} = \frac{l}{2\kappa^2\min\{\mu_1, \mu_2\}}k_1$. Then we have
\begin{align}
    m_j^1 \geq &\frac{1}{\kappa^3}\bigg\{ \frac{1}{2}k_1 - \left[\frac{4}{\kappa^2}k_3^2\left(k_1^2 + \frac{k_1}{k_2} \right) + \frac{10\mu_2}{l}k_1\left(k_3^2 + \frac{k_3}{k_4}\right)\right]\frac{d_{j+1}}{l} - \frac{4}{\kappa^2}\left(k_1^2 + \frac{k_1}{k_2}\right)\frac{c_{j+1}}{l}\bigg\},\\
    m_j^2 \geq & \frac{1}{\kappa} \bigg\{  \frac{1}{2}k_1 - \left[\frac{80}{\kappa^2}\left(k_1^2 + \frac{k_1}{k_2}\right) + 200\left(k_3^2 + \frac{k_3}{k_4}\right)  \right]\frac{d_{j+1}}{l} - \frac{80}{\kappa^2}\left(k_1^2 + \frac{k_1}{k_2}  \right)\frac{c_{j+1}}{l}\bigg\}.
\end{align}
 Letting $k_1/k_2 = k_3/k_4$ and $k_1 = \frac{1}{28}k_3$, we have 
 \begin{equation} \label{vr gamma bound}
     \gamma \geq \frac{1}{\kappa^3}\bigg\{ \frac{1}{56}k_3 - 360\left(k_3^2 + \frac{k_3}{k_4}\right)\frac{e_0}{l}\bigg\},
 \end{equation}
 where we use $c_j, d_j \leq e_0, \forall j$. By plugging in $k_1 = k_3/28$ and $\lambda=1/20$ into (\ref{vr e bound}), we have
 \begin{equation}  \label{e0 bound}
     e_0 \leq l\frac{(1+2k_3k_4+4k_3^2)^N-1}{k_4/k_3+3}.
 \end{equation}
 Plugging this into (\ref{vr gamma bound}), we have 
 \begin{equation}
     \gamma \geq \frac{1}{\kappa^3}\left[ \frac{k_3}{56} - 360\frac{(1+2k_3k_4+4k_3^2)^N-1}{k_4/k_3+3}\left(k_3^2 + k_3/k_4  \right)\right].
 \end{equation}
 We choose $k_4 = k_3^{1/2}$, then 
 \begin{equation}  \label{vr nt bound}
     NT\gamma \geq \frac{1}{\kappa^3}\left[ \frac{k_3}{56} - 360\left((1+2k_3^{3/2}+4k_3^2)^N-1\right)\left(\frac{k_3^2 + k_3^{1/2}}{k_3^{-1/2}+3}\right)\right]NT.
 \end{equation}
 
 \textbf{Part 3}.

 We choose $T=1$, $k_3 = \beta \kappa^{-6}$ and $N = \alpha(2k_3^{3/2}+4k_3^2)^{-1} \geq \frac{\alpha}{2}k_2^{-3/2}$, where $\alpha, \beta$ is irrelevant to $n, l, \mu_1, \mu_2$. Then since $(1+2k_3^{3/2}+4k_3^2)^N \leq e^{\alpha}$, after plugging in  $N$ and $k_3$, we have
 \begin{equation}
     NT\gamma \geq \frac{1}{\kappa^3} \left[\frac{k_3}{56} - 360(e^\alpha -1)(2k_3) \right]\frac{\alpha}{2}k_2^{-3/2} \geq \frac{1}{2}\left[\frac{1}{56} - 2\times 360(e^{\alpha}-1) \right]\alpha\beta^{-1/2} .
 \end{equation}
 Therefore, for choosing $\alpha$ small enough and $\beta$ small enough, we have $NT\gamma \geq 2$. Now it remains to verify several assumptions we made in the proof. The first is $\frac{k_3}{k_4} + k_3^2 \leq 1$. Since $\frac{k_3}{k_4} + k_3^2 = k_3^{1/2} + k_3^2$, this assumption easily holds when $\beta \leq 1/4$. The second assumption we want to verify is $\frac{1}{\kappa^2}k_3^2\left(k_1+ \frac{1}{k_2}\right)\leq 1/4$. Note that 
 \begin{equation*}
     \frac{1}{\kappa^2}k_3^2\left(k_1+ \frac{1}{k_2}\right) =  \frac{1}{\kappa^2}k_3^2\left(k_1+ \frac{k_3}{k_4k_1}\right)=\frac{1}{\kappa^2}k_3^2\left(\frac{1}{28}k_3 + 28k_3^{-1/2} \right).
 \end{equation*}
 So this assumption can also be easily satisfied when $\beta$ is small. The last assumption we need to verify is $\lambda \geq d_{j+1}\left(\tau_2 + \frac{1}{\beta_2}  \right)$. Because $d_{j+1} \leq e_0$ and (\ref{e0 bound}),
 \begin{align*}
     d_{j+1}\left(\tau_2 + \frac{1}{\beta_2}  \right) &\leq l\frac{(1+2k_3k_4+4k_3^2)^N-1}{k_4/k_3+3}\left(\frac{k_3}{l} + \frac{1}{k_4l} \right)\\
     &\leq \left((1+2k_3k_4+4k_3^2)^N-1\right)\left(\frac{k_3^2 + k_3^{1/2}}{k_3^{-1/2}+3}\right)\\
     &\leq 2(e^\alpha-1)k_3.
 \end{align*}
 So this assumption holds when $\alpha$ and $\beta$ are small.

 \end{proof}

\textbf{Proof of Theorem \ref{svrg thm 1}}
\begin{proof} 
We start from Part 3 of the proof of Theorem \ref{svrg thm 2}. We now choose $k_3 = \beta n^{-2/3}$, $N = \alpha(2k_3^{3/2}+4k_3^2)^{-1}$, and $T = \kappa^3n^{-1/3}$ then 
\begin{equation}
    NT\gamma \geq \frac{1}{2}\left[\frac{1}{56} - 2\times360(e^{\alpha}-1) \right]\alpha\beta^{-1/2}
\end{equation}
Therefore, for choosing $\alpha$ small enough and $\beta$ small enough, we have $NT\gamma \geq 2$. Other assumptions can be easily verified by the same way as in the proof of Theorem \ref{svrg thm 2}.
\end{proof}

 \section{AGDA for minimax problems under one-sided PL condition }
 \label{appendix4}

We are here to show that if $-f(x, \cdot)$ satisfies PL condition with constant $\mu$ and $f(\cdot, y)$ may be nonconvex (referred to as PL game by \citet{nouiehed2019solving}), AGDA as presented in Algorithm \ref{agda random} can find $\epsilon$-stationary point of $g(x):= \max_y f(x, y)$ within $\mathcal{O}(\epsilon^{-2})$ iterations. Note that GDmax has complexity $\mathcal{O}(\epsilon^{-2}\log(1/\epsilon))$ on minimax problems under the one-sided PL condition \citep{nouiehed2019solving}; SGDA has complexity $\mathcal{O}(\epsilon^{-2})$ on nonconvex-strongly-concave minimax problems \citep{lin2019gradient}.  Here we define condition number $\kappa = \frac{\mu}{l}$ and $L$ is still defined the same as before. The proof is based on our previous analysis and \citet{lin2019gradient}.

\begin{definition}
$x$ is $\epsilon$-stationary point of a differential function $f$ if $\mathbb{E}\Vert \nabla f(x)\Vert \leq \epsilon$.
\end{definition}

\begin{algorithm}[ht] 
    \caption{AGDA}
    \begin{algorithmic}[1]
        \STATE Input: $(x_0,y_0)$, step sizes $\tau_1>0, \tau_2^t>0$
        \FORALL{$t = 0,1,2,..., T-1$}
            \STATE $x_{t+1}\gets  x_t-\tau_1 \nabla f_x(x_t,y_t)$
            \STATE $y_{t+1}\gets y_t+\tau_2 \nabla f_y(x_{t+1},y_t)$
        \ENDFOR
        \STATE choose $(x^T, y^T)$ uniformly from $\{(x_t, y_t)\}_{t=0}^T$
    \end{algorithmic} \label{agda random}
\end{algorithm}

\begin{theorem}
Suppose Assumption \ref{Lipscthitz gradient} holds and $-f(x, \cdot)$ satisfies PL condition with constant $\mu$ for any $x$. If we run Algorithm \ref{agda random} with $\tau_1 = \frac{1}{20\kappa^2l}$ and $\tau_2 = \frac{1}{l}$, then 
\begin{equation}
    \mathbb{E} \Vert \nabla g(x^{T})\Vert^2 \leq \frac{8}{T+1}[10\kappa^2la_0+\kappa^2lb_0],
\end{equation}
where $a_0 = g(x_0) - g^*$ and $b_0 = g(x_0) - f(x_0, y_0)$.

\end{theorem}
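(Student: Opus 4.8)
The plan is to reuse the one-step estimates behind Theorem~\ref{contraction theorem} with $\sigma^2=0$, noting that every one of them \emph{except} the bound $\|\nabla g(x_t)\|^2\ge 2\mu_1 a_t$ remains valid here: those estimates only invoke the PL property of $-f(x,\cdot)$ in $y$, never that of $f(\cdot,y)$ in $x$. Since $-f(x,\cdot)$ is $\mu$-PL, Lemma~\ref{g smooth} gives that $g$ is $L$-smooth with $L=l+l^2/\mu$ and $\nabla g(x)=\nabla_x f(x,y^*(x))$, and Lemma~\ref{PL to EB QG} gives the quadratic growth bound $\|y_t-y^*(x_t)\|^2\le \frac{2}{\mu}\,b_t$, where $a_t=g(x_t)-g^*$ and $b_t=g(x_t)-f(x_t,y_t)$. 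Hence inequalities (\ref{x contraction}), (\ref{y conctraction}), (\ref{x step bound}), (\ref{x step bound 2}) and (\ref{gradient different bound}) all carry over verbatim, with $\mu_2$ replaced by $\mu$. First I would check admissibility of the stepsizes: $L\le 2l^2/\mu=2l\kappa$, so $\tau_1 L\le \frac{1}{10\kappa}\le 1$, and $\tau_1\le 1/l$, $\tau_2=1/l$, as required by those lemmas.

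Next, define the potential $P_t:=a_t+\frac{1}{10}b_t$. Combining the carried-over inequalities, using $\|\nabla_x f(x_t,y_t)\|^2\le 2\|\nabla g(x_t)\|^2+2\|\nabla_x f(x_t,y_t)-\nabla g(x_t)\|^2$ and then (\ref{gradient different bound}), and collecting the coefficients of $\|\nabla g(x_t)\|^2$ and of $b_t$, the target is the pure descent estimate
\begin{equation*}
    P_{t+1}\le P_t-\frac{\tau_1}{4}\,\|\nabla g(x_t)\|^2 .
\end{equation*}
This comes down to two elementary stepsize inequalities. The coefficient of $b_t$ on the right must not exceed $\frac{1}{10}$; using $l\tau_1\le 1$, $(1-\mu\tau_2)\le 1$, $\mu\tau_2=1/\kappa$ and $\frac{l^2\tau_1}{\mu}=\frac{1}{20\kappa}$ this reduces to $\frac{1}{20\lambda}+\frac{7}{20}\le 1$, which holds comfortably for $\lambda=\frac{1}{10}$. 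The coefficient of $\|\nabla g(x_t)\|^2$ is $-\frac{\tau_1}{2}+\lambda(1-\mu\tau_2)\tau_1\bigl(\frac{3}{2}+l\tau_1\bigr)\le \tau_1\bigl(-\frac{1}{2}+\frac{5\lambda}{2}\bigr)=-\frac{\tau_1}{4}$ for $\lambda=\frac{1}{10}$.

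Finally, telescoping $P_{t+1}\le P_t-\frac{\tau_1}{4}\|\nabla g(x_t)\|^2$ from $t=0$ to $T$, and using $P_{T+1}\ge 0$ (both $a_{T+1}=g(x_{T+1})-g^*\ge 0$ and $b_{T+1}=g(x_{T+1})-f(x_{T+1},y_{T+1})\ge 0$ since $g=\max_y f$), yields $\frac{\tau_1}{4}\sum_{t=0}^{T}\|\nabla g(x_t)\|^2\le P_0$. Because $(x^T,y^T)$ is drawn uniformly from $\{(x_t,y_t)\}_{t=0}^{T}$, we get $\mathbb{E}\|\nabla g(x^T)\|^2=\frac{1}{T+1}\sum_{t=0}^{T}\|\nabla g(x_t)\|^2\le \frac{4P_0}{\tau_1(T+1)}$; substituting $\tau_1=\frac{1}{20\kappa^2 l}$ and $P_0=a_0+\frac{1}{10}b_0$ gives exactly $\frac{8}{T+1}\bigl[10\kappa^2 l\,a_0+\kappa^2 l\,b_0\bigr]$. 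I expect the only real effort to be the coefficient bookkeeping in the middle step — tracking all the $\|\nabla g\|^2$, $\|\nabla_x f-\nabla g\|^2$ and $b_t$ contributions coming out of the factor $\lambda(1-\mu\tau_2)[\,\cdots\,]$ in the $y$-descent term and confirming that the prescribed $\tau_1,\tau_2$ make both inequalities hold simultaneously; everything else is a transcription of the two-sided argument with the $x$-side PL step deleted.
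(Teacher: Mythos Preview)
Your proposal is correct and arrives at exactly the stated bound, but it follows a genuinely different route from the paper's own proof. The paper does \emph{not} form the potential $P_t=a_t+\tfrac{1}{10}b_t$ here. Instead it keeps the $a_t$- and $b_t$-inequalities separate: from the $y$-step it derives a linear recursion $b_{t+1}\le \alpha\,b_t+\beta\,\|\nabla g(x_t)\|^2$, unrolls it to $b_t\le \alpha^t b_0+\beta\sum_{k<t}\alpha^{t-1-k}\|\nabla g(x_k)\|^2$, substitutes this into the $g$-descent inequality $g(x_{t+1})\le g(x_t)-\tfrac{\tau_1}{2}\|\nabla g(x_t)\|^2+\tfrac{\tau_1 l^2}{\mu}b_t$, telescopes, and then bounds the resulting double sum by a geometric series; the stepsize choice is used to show $1-\alpha\ge \tfrac{1}{2\kappa}$ and $\beta\le\tfrac{5}{2}\tau_1$, which makes the net coefficient in front of $\sum_t\|\nabla g(x_t)\|^2$ at least $\tau_1/4$. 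This is the technique of \citet{lin2019gradient}.

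Your Lyapunov argument is cleaner: by recycling the same one-step estimates you used for Theorem~\ref{contraction theorem} (minus the $x$-side PL step $\|\nabla g\|^2\ge 2\mu_1 a_t$) you get the single descent inequality $P_{t+1}\le P_t-\tfrac{\tau_1}{4}\|\nabla g(x_t)\|^2$ directly, avoiding the unrolling and double-sum bookkeeping entirely. The two stepsize checks you identify are exactly the right ones, and with $\lambda=\tfrac{1}{10}$ both hold (your reduction $\tfrac{1}{20\lambda}+\tfrac{7}{20}\le 1$ is obtained after bounding $(1-\mu\tau_2)\le 1$ and $l\tau_1\le 1$, giving $\tfrac{17}{20}\le 1$). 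The paper's approach has the minor advantage of not needing the lower bound $a_{T+1}\ge 0$ (it only uses $g(x_{T+1})\ge g^*$, which is the same thing), so there is no real cost either way; your route is simply shorter and stays closer to the machinery already built in Section~\ref{Sec3}.
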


\begin{proof}
For convenience, we still define $b_t = g(x_t) - f(x_t, y_t)$. Since it can be easily verified that $\tau_1 \leq 1/L$, by (\ref{x contraction}) and (\ref{gradient different bound}), we have
\begin{equation} \label{gg}
    g(x_{t+1}) \leq g(x_t) - \frac{\tau_1}{2}\Vert \nabla g(x_t)\Vert^2 + \frac{\tau_1l^2}{\mu_2}b_t.
\end{equation}
By (\ref{y contraction 2}), we have
\begin{align} \nonumber
    b_{t+1} \leq &(1-\mu_2\tau_2)b_t + (1-\mu_2\tau_2)\left(\tau_1+\frac{l}{2}\tau_1^2 \right)\Vert \nabla_x f(x_t, y_t)\Vert^2 - \\ \nonumber
    & (1-\mu_2\tau_2)\frac{\tau_1}{2}\Vert \nabla g(x_t)\Vert^2 + (1-\mu_2\tau_2)\frac{\tau_2}{2}\Vert \nabla_xf(x_t, y_t) - \nabla g(x_t)\Vert^2 \\ \nonumber
    \leq & (1-\mu_2\tau_2)b_t + \left[2(1-\mu_2\tau_2)\left(\tau_1+\frac{l}{2}\tau_1^2 \right)-(1-\mu_2\tau_2)\frac{\tau_2}{2}   \right] \Vert \nabla g(x_t)\Vert^2 + \\ \nonumber
    & \left[2(1-\mu_2\tau_2)\left(\tau_1+\frac{l}{2}\tau_1^2 \right)+(1-\mu_2\tau_2)\frac{\tau_2}{2}  \right]\Vert \nabla_xf(x_t, y_t) - \nabla g(x_t)\Vert^2 \\
    \leq & (1-\mu_2\tau_2)\left[1+\left(5\tau_1+ 2l\tau_1^2  \right)\frac{l^2}{\mu_2}  \right]b_t + (1-\mu_2\tau_2)\left[\frac{3}{2}\tau_1 + l\tau_1^2  \right]\Vert \nabla g(x_t)\Vert^2,
\end{align}
where in the second inequality we use Young's inequality, and in third inequality we use (\ref{gradient different bound}). We write
\begin{equation}
    b_{t+1} = \alpha b_t + \beta \Vert \nabla g(x_k)\Vert^2
\end{equation}
with 
\begin{equation*}
    \alpha  = (1-\mu_2\tau_2)\left[1+\left(5\tau_1+ 2l\tau_1^2  \right)\frac{l^2}{\mu_2}  \right], \qquad
    \beta  = (1-\mu_2\tau_2)\left[\frac{3}{2}\tau_1 + l\tau_1^2  \right].
\end{equation*}
Then 
\begin{equation*}
    b_t \leq \alpha^t b_0 + \beta \sum_{k=0}^{t-1} \alpha^{t-1-k}\Vert \nabla g(x_k)\Vert^2, \quad t \geq 1.
\end{equation*}
Plugging into (\ref{gg}), we have 
\begin{equation}
    g(x_{t+1}) \leq g(x_t) - \frac{\tau_1}{2}\Vert \nabla g(x_t)\Vert^2 + \frac{\tau_1l^2}{\mu_2}\alpha^t b_0 + \frac{\tau_1l^2\beta}{\mu_2} \sum_{k=0}^{t-1} \alpha^{t-1-k}\Vert \nabla g(x_k)\Vert^2, \quad t \geq 1.
\end{equation}
Telescoping and rearranging,
\begin{equation} \label{telescope 2}
    \frac{\tau_1}{2}\sum_{t=0}^{T}\Vert \nabla g(x_t)\Vert^2 - \frac{\tau_1l^2\beta}{\mu_2} \sum_{t=1}^{T}\sum_{k=0}^{t-1} \alpha^{t-1-k}\Vert \nabla g(x_k)\Vert^2 \leq g(x_0) - g(x_{T+1}) + \frac{\tau_1l^2}{\mu_2}b_0\sum_{t=0}^{T}\alpha^t \leq a_0 +   \frac{\tau_1l^2}{\mu_2(1-\alpha)}b_0
\end{equation}
Considering the left hand side of (\ref{telescope 2}),
\begin{equation}
     \sum_{t=1}^{T}\sum_{k=0}^{t-1} \alpha^{t-1-k}\Vert \nabla g(x_k)\Vert^2 = \sum_{k=0}^{T-1}\sum_{t = k+1}^{T}\alpha^{t-1-k}\Vert \nabla g(x_k)\Vert^2\leq \sum_{k=0}^{T-1} \frac{1}{1-\alpha}\Vert \nabla g(x_k)\Vert^2,
\end{equation}
and therefore,
\begin{equation} \label{telescope lhs}
    \frac{\tau_1}{2}\sum_{t=0}^{T}\Vert \nabla g(x_t)\Vert^2 - \frac{\tau_1l^2\beta}{\mu_2} \sum_{t=0}^{T}\sum_{k=0}^{t-1} \alpha^{t-1-k}\Vert \nabla g(x_k)\Vert^2 \geq \sum_{t=0}^{T} \Big\{\frac{1}{2} - \frac{l^2\beta}{\mu_2(1-\alpha)}  \Big\}\tau_1\Vert \nabla g(x_t)\Vert^2.
\end{equation}
We note that
    $\beta  = (1-\mu_2\tau_2)\left[\frac{3}{2}\tau_1 + l\tau_1^2  \right] \leq \frac{5}{2}\tau_1$ because $l/\tau_1\leq 1$ by our choice of $\tau_1$. Also, 
\begin{equation}
    1-\alpha = \mu_2\tau_2 - (1-\mu_2\tau_2)\left(5\tau_1+ 2l\tau_1^2  \right)\frac{l^2}{\mu_2} \geq \mu_2\tau_2 - 7(1-\mu_2\tau_2)\frac{\tau_1l^2}{\mu_2} \geq \frac{1}{2\kappa},
\end{equation}
where in the last inequality we use $\mu_2\tau_2 = 1/\kappa$ and $(1-\mu_2\tau_2)\frac{\tau_1l^2}{\mu_2}= (1-1/\kappa)/(20\kappa) \leq 1/(20\kappa)$. Plugging into (\ref{telescope lhs}),
\begin{equation}
    \frac{\tau_1}{2}\sum_{t=0}^{T}\Vert \nabla g(x_t)\Vert^2 - \frac{\tau_1l^2\beta}{\mu_2} \sum_{t=1}^{T}\sum_{k=0}^{t-1} \alpha^{t-1-k}\Vert \nabla g(x_k)\Vert^2 \geq \frac{\tau_1}{4}\sum_{t=0}^{T}\Vert \nabla g(x_t)\Vert^2.
\end{equation}
Combining with (\ref{telescope 2}), we have
\begin{equation}
    \frac{1}{T+1}\sum_{t=0}^{T}\Vert \nabla g(x_t)\Vert^2 \leq \frac{4}{(T+1)\tau_1} \left[a_0 +   \frac{\tau_1l^2}{\mu_2(1-\alpha)}b_0 \right] \leq \frac{8}{T+1}[10\kappa^2la_0+\kappa^2lb_0],
\end{equation}
where in the inequality we use $1-\alpha \geq 1/(2\kappa)$ again. 

\end{proof}

%%%%%%%%%%%%%%%%%%%%%%%%%%%%%%%%%%%%%%%%%%%%%%%%%%%%%%%%%%%%%%%%%%%%%%%%%%%%%%%
%%%%%%%%%%%%%%%%%%%%%%%%%%%%%%%%%%%%%%%%%%%%%%%%%%%%%%%%%%%%%%%%%%%%%%%%%%%%%%%

\end{document}